\renewcommand{\subsection}{\subsubsection}
\newtheorem{theorem}{Theorem}[section]
\newtheorem{lemma}{Lemma}[section]
\newtheorem{proposition}{Proposition}[section]
\newtheorem{remark}{Remark}[section]
\def\diag{{\rm diag}\,}
\def\nt{|\hspace{-0.7pt}|\hspace{-0.7pt}|}
\def\div{{\rm div}\,}
\def\H{\mathcal H }
\def\j{\mathfrak J}
\newcommand{\vH}{\mathcal {H}} 
\newcommand{\pr}{\partial_}
\newcommand{\vfi}{\varphi}
\newcommand{\wh}{\widehat}   
\newcommand{\ds}{\displaystyle}
\newcommand{\ve}{\varepsilon}
\newcommand{\xip}{\xi_{\mathrm{p}}}
\newcommand{\xiv}{\xi_{\mathrm{v}}}
\newcommand{\dsq}[1]{\ds\sqrt{#1}}
\newcommand{\R}{{\mathbb R}}
\newcommand{\T}{{\mathbb T}}
\begin{document}

\title{\bf Influence of vacuum electric field on \\ the stability of a plasma-vacuum interface}

\author{{\bf Nikita Mandrik}\\
Novosibirsk State University, 2 Pirogova str., 630090, Novosibirsk, Russia\\
E-mail: manikitos@gmail.com
\and
{\bf Yuri Trakhinin}\\
Sobolev Institute of Mathematics, 4 Koptyug av., 630090, Novosibirsk, Russia\\
E-mail: trakhin@math.nsc.ru
}

\date{}

\maketitle

\begin{abstract}
We study the free boundary problem for the plasma-vacuum interface in ideal compressible magnetohydrodynamics.
Unlike the classical statement, when the vacuum magnetic field obeys the div-curl system of pre-Maxwell dynamics, we do not neglect the displacement current in the vacuum region and consider the Maxwell equations for electric and magnetic fields. We show that
a sufficiently large vacuum electric field can make the planar interface violently unstable.
We find and analyze a sufficient condition on the vacuum electric field that precludes violent instabilities. Under this condition satisfied at each point of the unperturbed nonplanar plasma-vacuum interface, we prove the well-posedness of the linearized problem in anisotropic weighted Sobolev spaces.

\end{abstract}

\section{Introduction}
\label{sec:1}

We consider the equations of ideal compressible magnetohydrodynamics (MHD):
\begin{subequations}
\begin{align}
& \partial_t\rho  +{\rm div}\, (\rho {v} )=0,\\
& \partial_t(\rho {v} ) +{\rm div}\,(\rho{v}\otimes{v} -{H}\otimes{H} ) +
{\nabla}q=0, \\
& \partial_t{H} -{\nabla}\times ({v} {\times}{H})=0,\label{1c}\\
& \partial_t\bigl( \rho e +{\textstyle \frac{1}{2}}|{H}|^2\bigr)+
{\rm div}\, \bigl((\rho e +p){v} +{H}{\times}({v}{\times}{H})\bigr)=0,
\end{align}
\label{1}
\end{subequations}
where $\rho$ denotes density, $v\in\mathbb{R}^3$ plasma velocity, $H \in\mathbb{R}^3$ magnetic field, $p=p(\rho,S )$ pressure, $q =p+\frac{1}{2}|{H} |^2$ total pressure, $S$ entropy, $e=E+\frac{1}{2}|{v}|^2$ total energy, and  $E=E(\rho,S )$ internal energy. With a state equation of gas, $\rho=\rho(p ,S)$, and the first principle of thermodynamics, \eqref{1} is a closed system for the unknown $ U =U (t, x )=(p, v,H, S)$.

System (\ref{1}) is supplemented by the divergence constraint
\begin{equation}
{\rm div}\, {H} =0
\label{2}
\end{equation}
on the initial data ${U} (0,{x} )={U}_0({x})$. As is known, taking into account \eqref{2}, we can easily symmetrize system \eqref{1} by rewriting it in the nonconservative form
\begin{equation}
\left\{
\begin{array}{l}
{\displaystyle\frac{\rho_{p}}{\rho}\,\frac{{\rm d} p}{{\rm d}t} +{\rm div}\,{v} =0,\qquad
\rho\, \frac{{\rm d}v}{{\rm d}t}-({H},\nabla ){H}+{\nabla}q  =0 ,}\\[9pt]
{\displaystyle\frac{{\rm d}{H}}{{\rm d}t} - ({H} ,\nabla ){v} +
{H}\,{\rm div}\,{v}=0},\qquad
{\displaystyle\frac{{\rm d} S}{{\rm d} t} =0},
\end{array}\right. \label{3}
\end{equation}
where ${\rm d} /{\rm d} t =\partial_t+({v} ,{\nabla} )$ and by $(\ ,\ )$ we denote the scalar product.
Equations (\ref{3}) form the symmetric system
\begin{equation}
\label{4}
A_0(U )\partial_tU+\sum_{j=1}^3A_j(U )\partial_jU=0
\end{equation}
which is hyperbolic if the matrix  $A_0= {\rm diag} \left(1/(\rho a^2),\rho ,\rho ,\rho , 1,1,1,1\right)$ is positive definite, i.e.,
\begin{equation}
\rho  >0,\quad \rho_p >0, \label{5}
\end{equation}
where $a=(\rho_p)^{-\frac{1}{2}}$ is the sound velocity and other symmetric matrices have the form
\[
A_1=\left( \begin{array}{cccccccc} \frac{v_1}{\rho a^2}&1&0&0&0&0&0&0\\[6pt]
1&\rho v_1&0&0&0&{H_2}&{H_3}&0\\
0&0&\rho v_1&0&0&-{H_1}&0&0\\ 0&0&0&\rho v_1&0&0&-{H_1}&0\\
0&0&0&0&{v_1}&0&0&0\\
0&{H_2}&-{H_1}&0&0&{v_1}&0&0\\
0&{H_3}&0&-{H_1}&0&0&{v_1}&0\\ 0&0&0&0&0&0&0&v_1\\
\end{array} \right) ,
\]
\[
A_2=\left( \begin{array}{cccccccc} \frac{v_2}{\rho a^2}&0&1&0&0&0&0&0\\[6pt]
0&\rho v_2&0&0&-{H_2}&0&0&0\\ 1&0&\rho v_2&0&{H_1}&0&{H_3}&0\\ 0&0&0&\rho
v_2&0&0&-{H_2}&0\\
0&-{H_2}&{H_1}&0&{v_2}&0&0&0\\
0&0&0&0&0&{v_2}&0&0\\
0&0&{H_3}&-{H_2}&0&0&{v_2}&0\\0&0&0&0&0&0&0&v_2
\end{array} \right) ,
\]
\[
A_3=\left( \begin{array}{cccccccc} \frac{v_3}{\rho a^2}&0&0&1&0&0&0&0\\[6pt]
 0&\rho v_3&0&0&-{H_3}&0&0&0\\ 0&0&\rho v_3&0&0&-{H_3}&0&0\\ 1&0&0&\rho
v_3&{H_1}&{H_2}&0&0\\
0&-{H_3}&0&{H_1}&{v_3}&0&0&0\\
0&0&-{H_3}&{H_2}&0&{v_3}&0&0\\
0&0&0&0&0&0&{v_3}&0\\ 0&0&0&0&0&0&0&v_3
\end{array} \right) .
\]

Plasma-vacuum interface problems for system \eqref{1} appear in the mathematical modeling of plasma confinement by magnetic fields. This subject is very popular since the 1950's, but most of theoretical studies were devoted to finding stability criteria of equilibrium states. The typical work in this direction is the famous paper of Bernstein et. al. \cite{BFKK} where the plasma-vacuum interface problem was considered in its classical statement modeling the plasma confined inside a perfectly conducting rigid wall and isolated from it by a vacuum region. In this statement (see also, e.g., \cite{Goed}) the plasma is described by the MHD equations \eqref{1} whereas in the vacuum region one considers the so-called {\it pre-Maxwell dynamics}
\begin{equation}
\nabla \times \mathcal{H} =0,\qquad {\rm div}\, \mathcal{H}=0\label{6}
\end{equation}
describing the vacuum magnetic field $\mathcal{H}\in\mathbb{R}^3$. That is, one neglects the displacement current $(1/c)\,\partial_tE$ not only
in nonrelativistic MHD but also in the Maxwell equations in vacuum, where  ${E}\in\mathbb{R}^3$ is the electric field and $c$ is the speed of the light. Then, from
\[
\nabla \times {E} =- \frac{1}{c}\,\partial_t\mathcal{H},\qquad {\rm div}\, E=0
\]
the vacuum electric field $E$ is a secondary variable that may be  computed from the magnetic field ${\mathcal H}$. Recall that the plasma electric field is a secondary variable as well because in ideal MHD
\begin{equation}
\label{El}
E^+=-\frac{1}{c}\,v\times H,
\end{equation}
where we use the notation $E^+$ to distinguish between the vacuum and plasma electric fields.\footnote{Below we will drop the subscript ``$-$'' for the vacuum electric field: $E:=E^-$.}

The classical statement \cite{BFKK,Goed} of the plasma-vacuum problem for systems \eqref{1} and \eqref{6} is closed by the boundary conditions
\begin{subequations}
\begin{align}
& \frac{{\rm d}F }{{\rm d} t}=0,\quad [q]=0,\quad  (H,N)=0 \label{7a}\\
& (\mathcal{H}, N)=0, \label{7b}
\end{align}
\label{7}
\end{subequations}
on the interface $\Gamma (t)=\{F(t,x)=0\}$ and the initial data
\begin{equation}
\label{8}
\begin{array}{ll}
{U} (0,{x})={U}_0({x}),\quad {x}\in \Omega^{+} (0),\qquad
F(0,{x})=F_0({x}),\quad {x}\in\Gamma(0) , \\
\mathcal{H}(0,x)=
\mathcal{H}_0(x),\quad {x}\in \Omega^{-}(0),
\end{array}
\end{equation}
for the plasma variable $U$, the vacuum magnetic field $\mathcal{H}$ and the function $F$, where $\Omega^+(t)$ and $\Omega ^-(t)$ are space-time domains occupied by the plasma and the vacuum respectively, $N=\nabla F$, and $[q]= q|_{\Gamma}-\frac{1}{2}|\mathcal{H}|^2_{|\Gamma}$ denotes the jump of the total pressure across the interface. The first condition in \eqref{7} means that the interface moves with the velocity of plasma particles at the boundary and since $F$ is an unknown, problem \eqref{1}, \eqref{6}--\eqref{8} is a free-boundary problem. Moreover, in the plasma confinement problem both the plasma and vacuum regions are bounded domains, and at the perfectly conducting rigid wall $\Sigma$ which is the exterior boundary of the vacuum region $\Omega^-(t)$ one states the standard boundary condition $(\mathcal{H}, n)=0$ (see \cite{YM}), where $n$ is a normal vector to $\Sigma$.

In astrophysics, the plasma-vacuum interface problem \eqref{1}, \eqref{6}--\eqref{8} can be used for modeling a star or the solar corona when magnetic fields are taken into account. In this case, the vacuum region surrounding a plasma body is usually assumed to be unbounded.

Until recently, there were no well-posedness results for full ({\it non-stationary}) plasma-vacuum models. A basic energy a priori estimate in Sobolev spaces for the linearization of the plasma-vacuum problem \eqref{1}, \eqref{6}--\eqref{8} was first derived in \cite{T10}, provided that the stability condition stating that the magnetic fields on either side of the interface are not collinear holds for a basic state (``unperturbed flow"). The existence of solutions to the linearized problem was then proved in \cite{ST1}. In \cite{T10,ST1}, as in \cite{Lind,T09.cpam}, it was assumed that the hyperbolicity conditions \eqref{5} are satisfied in $\Omega^+(t)$ up to the boundary $\Gamma(t)$, i.e., the density does not go to zero continuously, but has a jump (clearly, in the vacuum region $\Omega^-(t)$ the density is identically zero). It is noteworthy that this assumption is automatically satisfied for the uniform incompressible plasma, i.e., for the case when in problem \eqref{1}, \eqref{6}--\eqref{8} system \eqref{1} is replaced by the equations of ideal incompressible MHD with a uniform (constant) density. For this case the results analogous to those from \cite{T10,ST1} were recently obtained in \cite{MTT13}.

In \cite{T10,ST1}, for technical simplicity the moving interface  $\Gamma (t)$ was assumed to have the form of a graph $F=x_1-\varphi (t,x')$, $x'=(x_2,x_3)$, i.e., both the plasma and vacuum domains are unbounded. However, as was noted in the subsequent paper \cite{ST}, this assumption is not suitable in a pure form for the original nonlinear free boundary problem \eqref{1}, \eqref{6}--\eqref{8} because in that case the vacuum region $\Omega^-(t)=\{x_1<\varphi (t,x')\}$ is a simply connected domain.  Indeed, the elliptic problem \eqref{6}, \eqref{7b} has then only the trivial solution $\mathcal{H}=0$, and the whole problem is reduced to solving the MHD equations \eqref{1} with a vanishing total pressure $q$ on $\Gamma (t)$.

The technically difficult case of non simply connected vacuum regions was postponed in \cite{ST} to a future work. Instead of this, the plasma-vacuum system was assumed in \cite{ST} to be not isolated from the outside world due to a given surface current on the fixed boundary of the vacuum region that forces oscillations. Namely, in \cite{ST} the space domain $\Omega$ occupied by plasma and vacuum is given by
$
\Omega :=\{x\in\R^3 \; |  \, x_1\in(-1,1),\; x' \in \T^2\}  ,
$
where $\T^2$ denotes the $2$-torus, which can be thought of as the unit square with periodic boundary conditions, the interface $\Gamma$ is given by
$F=x_1-\varphi (t,x')=0$, and $\Omega^\pm(t)=\{x_1\gtrless \varphi(t,x')\}\cap\Omega$ are the plasma and vacuum domains respectively. On the fixed top and bottom boundaries $\Gamma_\pm := \{ (\pm 1,x') \, , \, x' \in \T^2 \}$ of the domain $\Omega$, one prescribed in \cite{ST} the boundary conditions
\begin{equation}
\label{9}
v_1=H_1=0 \quad {\rm on } \; [0,T] \times \Gamma_+ \,, \qquad
\nu\times\H=\j \quad {\rm on } \; [0,T] \times \Gamma_- \, ,
\end{equation}
where ${\nu}=(-1,0,0)$ is the outward normal vector at $\Gamma_-$ and  $\j$ represents a given surface current which forces oscillations onto the plasma-vacuum system. In laboratory plasmas this external excitation may be caused by a system of coils. This model can also be exploited for the analysis of waves in astrophysical plasmas, e.g., by mimicking the effects of excitation of MHD waves by an external plasma by means of a localized set of ``coils'', when the response of the internal plasma is the main issue (see a more complete discussion in \cite{Goed}).

Basing on the results of \cite{T10,ST1} for the linearized problem, under the above mentioned stability condition \cite{T10} satisfied at each point of the initial interface the existence and uniqueness of the solution to the nonlinear plasma-vacuum interface problem \eqref{1}, \eqref{6}--\eqref{9} in suitable anisotropic Sobolev spaces was recently proved in \cite{ST} by a suitable NashЦ-Moser-type iteration.

In relativistic settings, the displacement current $(1/c)\,\partial_tE$ cannot be neglected and we have the Maxwell equations
\begin{equation}
\frac{1}{c}\,\partial_t\mathcal{H}+\nabla\times E =0,\qquad \frac{1}{c}\,\partial_tE-\nabla\times \mathcal{H} =0\label{10}
\end{equation}
in the vacuum region whereas in the plasma region instead of system \eqref{1} one considers the equations of relativistic magnetohydrodynamics (RMHD). We do not include the equations
\begin{equation}
\div\mathcal{H}=0,\qquad \div E=0\label{11'}
\end{equation}
into the main system \eqref{10} because they are just divergence constraints on the initial data. The relativistic plasma-vacuum interface problem for the case of special relativity was first systematically studied in \cite{T12}. For technical simplicity the plasma and vacuum regions were assumed to be unbounded and given by $\Omega^\pm(t)=\{x_1\gtrless \varphi(t,x')\}$ respectively. The Maxwell equations \eqref{10} and the RMHD equations are supplemented by the interface conditions \eqref{7} and suitable boundary conditions for the vacuum electric field (see \cite{T12} and also below).  It should be noted that the relativistic version of the second boundary condition in \eqref{7a} has the form \cite{T12}
\begin{equation}
[q] =q|_{\Gamma}-\frac{1}{2}\left(|\mathcal{H}|^2-|E|^2\right)|_{\Gamma},
\label{11}
\end{equation}
with the relativistic total pressure $q=p +\frac{1}{2}\left(|H|^2-|E^+|^2\right)$, where the plasma electric field $E^+$ is given by \eqref{El}.

By considering particular cases for the unperturbed flow, it was shown in \cite{T12} that, unlike the non-relativistic case, even if the non-collinearity condition $(H\times\mathcal{H})|_{\Gamma}\neq 0$ from \cite{T10,ST1,ST} on the unperturbed magnetic fields holds a sufficiently large unperturbed vacuum electric field can make the relativistic planar interface violently unstable. The main result of \cite{T12} is finding a {\it sufficient stability condition} which gives a (basic) energy a priori estimate in the anisotropic weighted Sobolev space $H^1_*$ for the variable coefficients linearized problem for nonplanar plasma-vacuum interfaces (see \cite{Chen,YM,MST,Sec} and references therein as well as Section \ref{sec:3} for the definition of $H^m_*$). Namely, it was proved in \cite{T12} that under the non-collinearity condition the planar interface is stable if, roughly speaking, the unperturbed vacuum electric field is small enough. Moreover, if the sufficient stability condition holds at each point of the unperturbed nonplanar interface, then the linearized problem obeys the mentioned energy a priori estimate. The deduction of this a priori estimate is the first step towards the proof of a local-in-time existence and uniqueness theorem for the nonlinear problem.

However, due to enormous technical complication of the RMHD equations it is very difficult to analyze (even numerically) the parametric domain described by the sufficient stability condition found in \cite{T12} and how big is it in comparison with the whole stability domain, i.e., the whole domain of the well-posedness of the constant coefficients linearized problem for a planar interface.  The whole stability domain could be found by spectral analysis, but this seems technically impossible in practice (the linearized RMHD equations are rather complicated even for particular cases of the unperturbed flow).

At the same time, in the analysis in \cite{T12} relativistic effects play a rather passive role whereas the crucial influence on stability is exerted by vacuum electric field. Recall that in the classical statement of the non-relativistic plasma-vacuum interface problem \cite{BFKK,Goed,T10,ST1,ST} the influence of vacuum electric field is ignored because the vacuum electric field is a secondary variable defined through the vacuum magnetic field which should satisfy the div-curl system \eqref{6}. This seems reasonable at first sight (and, as we will see, this is indeed so if the vacuum electric field is small enough) because if we reduce the MHD system \eqref{3} and the Maxwell equations \eqref{10} to a dimensionless form by introducing the scaled values
\begin{equation}
\label{12}
\begin{array}{c}
\displaystyle
\tilde{x}=\frac{x}{\ell},\quad \tilde{t}=\frac{\bar{a}t}{\ell},\quad \tilde{v}=\frac{v}{\bar{a}},\quad \tilde{\rho}=\frac{\rho}{\bar{\rho}},\quad\tilde{p}=\frac{p}{\bar{\rho}\bar{a}^2},\quad
\widetilde{S}=\frac{S}{\bar{S}},\\[12pt]
\displaystyle
 \widetilde{H}=\frac{H}{\bar{a}\sqrt{\bar{\rho}}},\quad \widetilde{\mathcal{H}}=\frac{\mathcal{H}}{\bar{a}\sqrt{\bar{\rho}}},\quad
\widetilde{E}=\frac{E}{\bar{a}\sqrt{\bar{\rho}}},
\end{array}
\end{equation}
then after dropping tildes the MHD system in terms of the scaled values stays unchanged whereas the Maxwell equations take the form
\begin{subequations}
\label{13}
\begin{align}
& \varepsilon\partial_t\mathcal{H}+\nabla\times E =0, \label{13a}\\
& \varepsilon\partial_tE-\nabla\times \mathcal{H} =0 \label{13b}
\end{align}
\end{subequations}
(clearly, the divergence constraints \eqref{11'} stay unchanged), with
\[
\varepsilon =\frac{\bar{a}}{c}\,,
\]
where
$\ell$ is a characteristic length and $\bar{\rho}$, $\bar{a}$, $\bar{S}$ are constants associated with a uniform flow, namely, $\bar{a}$ is the sound speed, $\bar{\rho}$ is the density and $\bar{S}$ is the entropy for this flow. Since for non-relativistic speeds characteristic plasma velocities, in particular, the constant sound speed $\bar{a}$ are very small compared to the speed of light, the constant $\varepsilon$ is a very small but {\it fixed} parameter.

In this paper, unlike the classical statement in \cite{BFKK,Goed,T10,ST1,ST}, we do not set $\varepsilon =0$ in \eqref{13b} and consider the full Maxwell equations in vacuum. We show that in spite of the fact that $\varepsilon$ is a very small constant a large enough vacuum electric field crucially influence on the stability of a non-relativistic plasma-vacuum interface. Thus, in our new statement the non-relativistic plasma is still described by the MHD equations \eqref{3}\footnote{We assume that they are already written in terms of the scaled values \eqref{12} and tildes are dropped.} whereas the vacuum magnetic and electric fields obey the Maxwell equations \eqref{13}. As in \cite{T12}, for technical simplicity we consider the case of unbounded domains. Namely, we assume that the domains $\Omega^\pm(t)=\{x_1\gtrless \varphi(t,x')\}$ represent the plasma and vacuum regions respectively. On the interface $\Gamma (t)=\{F(t,x)=x_1-\varphi(t,x')=0\}$ we still have the boundary conditions \eqref{7}, where the jump $[q]$ is given by \eqref{11} and $q$ is the total pressure of non-relativistic plasma appearing in \eqref{1}, i.e., $q =p+\frac{1}{2}|{H} |^2$. That is, one has
\begin{subequations}
\label{16}
\begin{align}
& \partial_t \varphi= v_N, \label{16a}\\
& q= \textstyle{\frac{1}{2}}\left(|\mathcal{H}|^2-|E|^2\right)\qquad \mbox{on}\ \Gamma (t)\label{16b}
\end{align}
\end{subequations}
and
\begin{subequations}
\label{17}
\begin{align}
& H_N=0, \label{17a}\\
& \mathcal{H}_N=0\qquad \mbox{on}\ \Gamma (t),\label{17b}
\end{align}
\end{subequations}
where $v_N=(v,N)$, $H_N=(H,N)$, $\mathcal{H}_N =(\mathcal{H},N)$, and $N=\nabla F= (1,-\partial_2\varphi ,-\partial_3\varphi )$. As for current-vortex sheets \cite{T09}, conditions \eqref{17} are not real boundary conditions and should be considered as restrictions on the initial data.

The boundary conditions for the vacuum electric field are just jump conditions for equations \eqref{1c} (in a dimensionless form) and \eqref{13a}, i.e., for the conservation laws
\[
\partial_t(\varepsilon H^{\pm}) +\nabla \times E^{\pm} =0\quad \mbox{in}\ \Omega^{\pm}(t),
\]
with $H^+=H$, $H^-=\mathcal{H}$, $E^+=-\varepsilon( v\times H)$, cf. \eqref{El}, and $E^-=E$. These jump conditions have the known form \cite{BFKK}
\[
N\times [E]=\varepsilon\partial_t\varphi\,[H] \quad \mbox{on}\ \Gamma (t),
\]
with $[E]=E^+_{|\Gamma}-E|_{\Gamma}$, $[H]=H|_{\Gamma}-\mathcal{H}|_{\Gamma}$, and, taking into account \eqref{16a} and \eqref{17a}, we exclude from them the velocity and the plasma magnetic field:
\begin{equation}
\label{18}
N\times E =\varepsilon\partial_t\varphi\,\mathcal{H} \quad \mbox{on}\ \Gamma (t).
\end{equation}
The first  condition in \eqref{18} is nothing else than constraint \eqref{17b} and the rest two boundary conditions in \eqref{18} read
\begin{equation}
\label{19}
E_{\tau_2}=\varepsilon\mathcal{H}_3\partial_t\varphi,\quad
E_{\tau_3}=-\varepsilon\mathcal{H}_2\partial_t\varphi \qquad \mbox{on}\ \Gamma (t),
\end{equation}
with $E_{\tau_i}=E_1\partial_i\varphi+E_i$, $i=2,3$. In fact, if in \eqref{19} we formally set $\varepsilon =1$, then we get the corresponding boundary conditions for the relativistic case in \cite{T12}, where the speed of the light was taken to be equal to unity.

Summarizing the above, we obtain the free boundary value problem for system \eqref{4} in $\Omega^+(t)$ and system \eqref{13} in $\Omega^-(t)$ with the boundary conditions \eqref{16} and \eqref{19} on $\Gamma (t)$ and the initial  data
\begin{equation}
\begin{array}{c}
{U} (0,{x})={U}_0({x}),\quad {x}\in \Omega^{+} (0),\quad
{V} (0,{x})={V}_0({x}),\quad {x}\in \Omega^{-} (0),\\[3pt]
\varphi (0,{x}')=\varphi_0({x}'),\quad {x}\in\mathbb{R}^2,
\end{array}
\label{20}
\end{equation}
where $V=(\mathcal{H},E)$. Moreover, exactly as in \cite{T12}, we can prove that \eqref{2}, \eqref{11'} and \eqref{17} are restrictions on the initial data \eqref{20}, i.e., if they are satisfied at the first moment $t=0$, then they hold for all $t>0$. System \eqref{13} is always hyperbolic and, as in \cite{T12}, we assume that the hyperbolicity condition \eqref{5} is satisfied up to the boundary of the domain $\Omega^+(t)$.

In this paper, we study the linearized problem associated to \eqref{4}, \eqref{13}, \eqref{16}, \eqref{19}, \eqref{20}. We first obtain a non-relativistic counterpart of the sufficient stability condition from \cite{T12}. Then, our main goal is to analyze it for particular cases and compare with the spectral stability condition that was technically impossible in relativistic settings in \cite{T12}. The spectral stability condition is nothing else than the Kreiss-Lopatinski condition \cite{Kreiss,Maj84} for the constant coefficients linearized problem for a planar interface. Even in our non-relativistic settings, for technical reasons we are not able to find this condition for the general case of the unperturbed flow, but we can fortunately analyze it both analytically and numerically for some particular cases.

Following \cite{T12}, we can derive an energy a priori estimate in $H^1_*$ for the variable coefficients linearized problem for nonplanar plasma-vacuum interfaces, provided that the sufficient stability condition holds at each point of the unperturbed interface. Moreover, the existence of solutions of the linearized problem in $H^1_*$ was not proved in \cite{T12} and we fill this gap for our non-relativistic version of the linearized problem. But, it is worth noting that the same arguments towards the proof of existence are still applicable in relativistic settings in \cite{T12}.

The rest of the paper is organized as follows. In Section \ref{sec:2}, we reduce the free boundary problem \eqref{4}, \eqref{13}, \eqref{16}, \eqref{19}, \eqref{20} to an initial-boundary value problem in a fixed domain and discuss properties of the reduced problem. In Section \ref{sec:3}, we obtain the linearized problem and formulate main results for it (see Theorem \ref{t1} and Proposition \ref{ppcase}). In Section \ref{sec:4}, we find the mentioned sufficient stability condition for a planar interface by the energy method applied to the linearized problem in the case of constant coefficients. Moreover, in Section \ref{sec:4} we analyze this stability condition for the particular case from Proposition \ref{ppcase}. In Section \ref{sec:5}, by considering particular cases of the unperturbed constant solution, we prove that the planar interface can be violently unstable, and we study both analytically and numerically the spectral stability condition in the particular from Proposition \ref{ppcase}.  At last, Section \ref{sec:6} is devoted to the proof of the well-posedness of the linearized problem under the sufficient stability condition satisfied at each point of the unperturbed nonplanar plasma-vacuum interface.

\section{Reduced problem in a fixed domain}
\label{sec:2}

We straighten the interface $\Gamma$ by using the same change of independent variables as in \cite{T09,T09.cpam,T10}. That is, the unknowns $U$ and $V$ being smooth in $\Omega^{\pm}(t)$ are replaced by the vector-functions
\[
\widetilde{U}(t,x ):= {U}(t,\Phi^+ (t,x),x'),\quad
\widetilde{V}(t,x ):= V(t,\Phi^- (t,x),x'),
\]
which are smooth in the half-space $\mathbb{R}^3_+$, where
\[
\Phi^{\pm}(t,x ):= \pm x_1+\Psi^{\pm}(t,x ),\quad \Psi^{\pm}(t,x ):= \chi (\pm x_1)\varphi (t,x').
\]
and $\chi\in C^{\infty}_0(\mathbb{R})$ equals to 1 on $[-1,1]$, and $\|\chi'\|_{L_{\infty}(\mathbb{R})}<1/2$. Here, we use the cut-off function $\chi$ to avoid assumptions about compact support of the initial data in our (future) nonlinear existence theorem.\footnote{In \cite{T12}, for technical simplicity the cut-off function was not introduced, i.e., the simplest change of variables with $\chi\equiv 1$ was used.} The above change of variable is admissible if $\partial_1\Phi^{\pm}\neq 0$. The latter is guaranteed, namely, the inequalities $\partial_1\Phi^+> 0$ and $\partial_1\Phi^-< 0$ are fulfilled, if we consider solutions for which $\|\varphi\|_{L_{\infty}([0,T]\times\mathbb{R}^2)}\leq 1$. This holds if,
without loss of generality, we consider the initial data satisfying $\|\varphi_0\|_{L_{\infty}(\mathbb{R}^2)}\leq 1/2$, and the time $T$ in our existence theorem is sufficiently small.

Dropping for convenience tildes in $\widetilde{U}$ and $\widetilde{V}$, we reduce \eqref{4}, \eqref{13}, \eqref{16}, \eqref{19}, \eqref{20}  to the initial boundary value problem
\begin{equation}
\mathbb{P}(U,\Psi^+ )=0\quad\mbox{in}\ [0,T]\times \mathbb{R}^3_+,\label{24}
\end{equation}
\begin{equation}
\mathbb{V}(V,\Psi^- )=0\quad\mbox{in}\ [0,T]\times \mathbb{R}^3_+,\label{25}
\end{equation}
\begin{equation}
\mathbb{B}(U,V,\varphi )=0\quad\mbox{on}\ [0,T]\times\{x_1=0\}\times\mathbb{R}^{2},\label{26}
\end{equation}
\begin{equation}
(U,V)|_{t=0}=(U_0,V_0)\quad\mbox{in}\ \mathbb{R}^3_+,\qquad \varphi|_{t=0}=\varphi_0\quad \mbox{in}\ \mathbb{R}^{2},\label{27}
\end{equation}
where $\mathbb{P}(U,\Psi^+ )=L(U,\Psi^+ )U$, $\mathbb{V}(V,\Psi^- )= {M}(\Psi^- )V$,
\[
L(U, \Psi^+ )=A_0(U)\partial_t +\widetilde{A}_1(U,\Psi^+ )\partial_1+A_2(U )\partial_2+A_3(U )\partial_3,
\]
\[
\widetilde{A}_1(U,\Psi^+ )=\frac{1}{\partial_1\Phi^+}\left(
A_1(U )-A_0(U)\partial_t\Psi^+-A_2(U)\partial_2\Psi^+ -A_3(U)\partial_3\Psi^+\right),
\]
\[
{M}(\Psi^- )=\varepsilon I\,\partial_t +\widetilde{B}_1(\Psi^- )\partial_1+B_2\partial_2+B_3\partial_3,
\]
\[
\widetilde{B}_1(\Psi^- )= \frac{1}{\partial_1\Phi^-}\left(B_1-\varepsilon I\,\partial_t\Psi^- -B_2\partial_2\Psi^-
-B_3\partial_3\Psi^- \right),\qquad \partial_1\Phi^{\pm}=\pm 1 +\partial_1\Psi^{\pm},
\]
\[
B_j=\begin{pmatrix}
0_3 & b_j \\
b_j^T & 0_3
\end{pmatrix},\quad j=1,2,3,\qquad
\mathbb{B}(U,V,\varphi )=\left(
\begin{array}{c}
v_N -\partial_t\varphi  \\[3pt] q -\frac{1}{2}(|\mathcal{H}|^2-|E|^2) \\[3pt]
E_{\tau_2}-\varepsilon\mathcal{H}_3\partial_t\varphi  \\[3pt]
E_{\tau_3}+\varepsilon\mathcal{H}_2\partial_t\varphi
\end{array}
\right),
\]
\[
b_1=\left(\begin{array}{ccc}
 0 & 0 & 0 \\
 0 & 0 & 1 \\
 0 & -1 & 0
\end{array} \right),\quad
b_2=\left(\begin{array}{ccc}
 0 & 0 & 1 \\
 0 & 0 & 0 \\
-1 & 0 & 0
\end{array} \right),\quad
b_3=\left(\begin{array}{cccccc}
 0 & -1 & 0 \\
 1 & 0 & 0 \\
 0 & 0 & 0
\end{array} \right),
\]
\[
{v}_{N}={v}_1-{v}_2\partial_2{\Psi}^{+}-{v}_3\partial_3{\Psi}^{+},\quad {E}_{\tau_i}=E_1\partial_i\Psi^{-}+E_i,\quad i=2,3.
\]

Regarding constraints \eqref{2}, \eqref{11'} and \eqref{17}, following \cite{T09} and \cite{T12}, we can proof the following propositions.

\begin{proposition}
Let the initial data \eqref{27} satisfy
\begin{equation}
\div  h=0
\label{25'}	
\end{equation}
and
\begin{equation}
H_{N}|_{x_1=0}=0,
\label{26'}
\end{equation}
where
\[
h=(H_N,H_2\partial_1\Phi^+,H_3\partial_1\Phi^+),\quad
H_N=H_1-H_2\partial_2\Psi^+-H_3\partial_3\Psi^+.
\]
If problem \eqref{24}--\eqref{27} has a solution $(U,V,\varphi )$, then this solution satisfies \eqref{25'} and \eqref{26'} for all $t\in [0,T]$.
\label{p1}
\end{proposition}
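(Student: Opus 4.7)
The plan is to verify that both $\div h$ and $H_N|_{x_1=0}$ satisfy homogeneous linear first-order equations derived solely from the plasma equations \eqref{24} and the kinematic boundary condition $v_N=\partial_t\varphi$ (the first component of $\mathbb{B}(U,V,\varphi)=0$), so that the vanishing at $t=0$ propagates to all $t\in[0,T]$. This is the same constraint-propagation mechanism used in \cite{T09} for current-vortex sheets and in \cite{T12} for the relativistic plasma-vacuum problem; the vacuum system \eqref{25} plays no role in this proposition.

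For the bulk constraint I would start from the magnetic induction equation in \eqref{3} rewritten in the straightened coordinates. Following the computation in \cite{T09}, it recasts as an evolution equation for the geometric field $h$ of the schematic form
$$\partial_t h + \frac{1}{\partial_1\Phi^+}\,(w\cdot\nabla)h - \frac{1}{\partial_1\Phi^+}\,(h\cdot\nabla)w + h\,\mathcal{D}(v,\Psi^+) = 0,$$
where $w=(v_N-\partial_t\Psi^+,\, v_2\partial_1\Phi^+,\, v_3\partial_1\Phi^+)$ and $\mathcal{D}(v,\Psi^+)$ collects the divergence-type terms produced by the chain rule. Applying $\div$ to this equation and using the straightened continuity equation to eliminate the time derivative of $\rho$ yields a homogeneous scalar transport equation of the form
$$\partial_t\!\left(\frac{\div h}{\rho\,\partial_1\Phi^+}\right) + \frac{1}{\rho\,\partial_1\Phi^+}\bigl(w\cdot\nabla\bigr)\!\left(\frac{\div h}{\rho\,\partial_1\Phi^+}\right) = 0,$$
whose Cauchy data vanish by assumption \eqref{25'}. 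The standard method of characteristics in $[0,T]\times\mathbb{R}^3_+$ therefore gives $\div h\equiv 0$.

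For the boundary constraint I would restrict the first component of the $h$-equation to $\{x_1=0\}$. Since $\Psi^+(t,0,x')=\chi(0)\,\varphi(t,x')=\varphi(t,x')$, one has $w_1|_{x_1=0} = v_N-\partial_t\varphi$, which is exactly zero by \eqref{16a}. All $\partial_1$-terms in the restricted equation drop out, and $H_N|_{x_1=0}$ satisfies a homogeneous two-dimensional transport equation on $[0,T]\times\mathbb{R}^2$ with smooth coefficients built from the boundary traces of $v$, $H$ and $\varphi$. Assumption \eqref{26'} then propagates along the integral curves of this tangential operator.

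The main obstacle is purely algebraic: reorganizing the divergence of the straightened induction equation, together with the straightened continuity equation, into the clean scalar transport form displayed above. Once this identity is verified, everything else is the usual characteristics argument; the smoothness of the cut-off $\chi$, the bound $\|\chi'\|_{L_\infty(\mathbb{R})}<1/2$, and the hyperbolicity condition \eqref{5} up to the boundary ensure that $\rho\,\partial_1\Phi^+$ remains strictly positive and that no convective coefficient degenerates in $[0,T]\times\mathbb{R}^3_+$.
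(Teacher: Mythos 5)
Your proposal is correct and follows essentially the same route as the paper, which itself gives no details and simply states that the proof is "absolutely the same as the corresponding one in \cite{T09}" for current-vortex sheets: there, exactly as you describe, $\div h/(\rho\,\partial_1\Phi^+)$ is shown to satisfy a homogeneous transport equation (via the straightened induction and continuity equations) and the trace $H_N|_{x_1=0}$ satisfies a tangential transport equation because the kinematic condition $v_N=\partial_t\varphi$ annihilates the normal-derivative terms. Your observations that the vacuum system \eqref{25} is irrelevant here and that the cut-off $\chi$ only enters through keeping $\partial_1\Phi^+$ bounded away from zero are both consistent with the paper's remarks.
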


\begin{proposition}
Let the initial data \eqref{27} satisfy
\begin{equation}
\mathcal{H}_N|_{x_1=0}=0\label{15.2'}
\end{equation}
and
\begin{equation}
{\rm div} \,\mathfrak{h}=0,\quad {\rm div}\, \mathfrak{e} =0,\label{28}
\end{equation}
where
\[
\mathcal{H}_{N}=\mathcal{H}_1-\mathcal{H}_2\partial_2\Psi^{-}-\mathcal{H}_3\partial_3\Psi^{-},\quad \mathfrak{h}=(\mathcal{H}_{N},\mathcal{H}_2\partial_1\Phi^{-},\mathcal{H}_3\partial_1\Phi^{-}),
\]
\[
\mathfrak{e}=({E}_{N},{E}_2\partial_1\Phi^{-},{E}_3\partial_1\Phi^{-}),\quad {E}_{N}={E}_1-{E}_2\partial_2\Psi^{-}-{E}_3\partial_3\Psi^{-}.
\]
If problem \eqref{24}--\eqref{27} has a solution $(U,V,\varphi )$ with the property
\begin{equation}
\partial_t\varphi \leq 0,\label{expan1}
\end{equation}
then this solution satisfies \eqref{15.2'} and \eqref{28} for all $t\in [0,T]$.

If problem \eqref{24}--\eqref{27} with the two additional boundary conditions
\begin{equation}
{\rm div}\, \mathfrak{h}|_{x_1=0} =0\qquad\mbox{and}\qquad
{\rm div}\, \mathfrak{e}|_{x_1=0} =0,
\label{add.bound}
\end{equation}
has a solution $(U,V,\varphi )$ with the property
\begin{equation}
\partial_t\varphi > 0,\label{shrink}
\end{equation}
then this solution again satisfies \eqref{15.2'} and \eqref{28} for all $t\in [0,T]$.
\label{p2}
\end{proposition}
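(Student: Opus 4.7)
The plan is to mimic the constraint-propagation argument of \cite{T09,T12}: introduce the auxiliary quantities $g:=\div\mathfrak{h}$, $h:=\div\mathfrak{e}$ in $\mathbb{R}^3_+$ and $k:=\mathcal{H}_N|_{x_1=0}$ on $\mathbb{R}^2$, derive a closed coupled system of linear homogeneous evolution equations for $(g,h,k)$, and then conclude $g\equiv h\equiv k\equiv 0$ by standard uniqueness given that they vanish at $t=0$ (and, when the characteristics of the system are ingoing at $\{x_1=0\}$, given that the relevant boundary traces vanish as well).

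For the interior constraints, I would apply the divergence to \eqref{25} after multiplying by $\partial_1\Phi^-$. In the original (un-straightened) coordinates $y=(\Phi^-(t,x),x')$ the Maxwell system identically preserves $\nabla\cdot\mathcal{H}$ and $\nabla\cdot E$ in time; translating this back via the Piola-type identity $\div\mathfrak{h}=(\partial_1\Phi^-)(\nabla\cdot\mathcal{H})|_{y=(\Phi^-,x')}$ (and analogously for $\mathfrak{e}$), one finds that $g$ and $h$ satisfy decoupled transport equations of the form
\[
\partial_t g-\frac{\partial_t\Psi^-}{\partial_1\Phi^-}\,\partial_1 g+c(t,x)\,g=0,\qquad \partial_t h-\frac{\partial_t\Psi^-}{\partial_1\Phi^-}\,\partial_1 h+c(t,x)\,h=0
\]
on $[0,T]\times\mathbb{R}^3_+$. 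Because $\partial_1\Phi^-|_{x_1=0}=-1$ (as $\chi'(0)=0$) and $\partial_t\Psi^-|_{x_1=0}=\partial_t\varphi$, the characteristic speed of these equations at the boundary equals $\partial_t\varphi$. In the case \eqref{expan1} the characteristics leave $\mathbb{R}^3_+$, so \eqref{28} at $t=0$ propagates without any boundary data; in the case \eqref{shrink} they enter $\mathbb{R}^3_+$, forcing one to impose \eqref{add.bound}. Standard characteristic uniqueness for the resulting linear transport equations then yields $g\equiv h\equiv 0$ in both situations.

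For the boundary constraint $k=\mathcal{H}_N|_{x_1=0}$ I would exploit the fact that the component of the vector identity $N\times E=\varepsilon\partial_t\varphi\mathcal{H}$ not explicitly imposed in \eqref{26} is, once the two relations \eqref{19} are substituted into $\partial_3\varphi\,E_2-\partial_2\varphi\,E_3=\varepsilon\partial_t\varphi\mathcal{H}_1$, algebraically equivalent to $\varepsilon\partial_t\varphi\cdot k=0$. This already handles the case \eqref{shrink} pointwise, where $\partial_t\varphi>0$ forces $k=0$. To cover the case \eqref{expan1}, where $\partial_t\varphi$ may vanish, one further differentiates $\mathcal{H}_N=\mathcal{H}_1-\mathcal{H}_2\partial_2\Psi^--\mathcal{H}_3\partial_3\Psi^-$ in time, substitutes $\varepsilon\partial_t\mathcal{H}_j$ from the first three equations of \eqref{25}, takes the trace at $x_1=0$, and uses the BCs \eqref{19} to eliminate the tangential traces of $E_2$ and $E_3$; the bulk terms collapse (using $\partial_2\partial_3\varphi=\partial_3\partial_2\varphi$) to a scalar transport equation on $\{x_1=0\}$ for $k$ whose forcing vanishes thanks to $g|_{x_1=0}=0$. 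Together with $k|_{t=0}=0$ from \eqref{15.2'}, uniqueness for this boundary transport yields $k\equiv 0$.

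The main obstacle in executing this plan is the algebraic bookkeeping inherent in the change of variables: deriving the transport structure requires careful control of the coefficients $\partial_j\Psi^-$ and $\partial_1\Phi^-$ appearing in $\widetilde B_1$, and identifying the precise coupling between the boundary transport for $k$ and the interior divergence $g$. Once these formulas are in place, the dichotomy between \eqref{expan1} and \eqref{shrink} reduces to the classical question of whether the boundary $\{x_1=0\}$ is outgoing or incoming for the resulting first-order transport system, and Proposition~\ref{p2} follows in complete parallel with the arguments of \cite{T09,T12}.
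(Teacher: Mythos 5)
The paper itself contains no proof of Proposition \ref{p2}: it only states that one should ``literally repeat arguments from \cite{T12}'' with modifications for the cut-off function $\chi$. Measured against that intended argument, your plan is essentially the right reconstruction: the Piola-type identity turning $\div\mathfrak h$ and $\div\mathfrak e$ into Jacobian multiples of the Eulerian divergences; the resulting homogeneous transport equations whose characteristic speed at $\{x_1=0\}$ equals $\partial_t\varphi$ (so the boundary is outflow under \eqref{expan1} and inflow under \eqref{shrink}, which is exactly why \eqref{add.bound} must be appended in the second case); and the reduction of $\partial_t\bigl(\mathcal{H}_N|_{x_1=0}\bigr)$, via the $\mathcal H$-equations in \eqref{25} and the two imposed conditions \eqref{19}, to a multiple of $\partial_t\varphi\,\div\mathfrak h|_{x_1=0}$. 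All of that is sound.

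One step, however, is circular. For the case \eqref{shrink} you claim that the first component of $N\times E=\varepsilon\partial_t\varphi\,\mathcal H$, after substituting \eqref{19}, gives $\varepsilon\partial_t\varphi\,\mathcal H_N|_{x_1=0}=0$ pointwise and hence $k=0$. But that component is \emph{not} among the boundary conditions of problem \eqref{24}--\eqref{27}: the operator $\mathbb B$ in \eqref{26} imposes only $v_N-\partial_t\varphi$, the pressure jump, and the two relations \eqref{19}. The paper says explicitly that the normal component of \eqref{18} ``is nothing else than constraint \eqref{17b}''; given \eqref{19} it is algebraically equivalent to $\partial_t\varphi\,\mathcal H_N|_{x_1=0}=0$, i.e.\ it is precisely (a consequence of) the statement Proposition \ref{p2} asks you to prove, and constraints \eqref{17} are expressly ``not real boundary conditions'' but restrictions on the initial data. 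Assuming it is begging the question. The flaw is harmless only because your second route already covers both cases: the identity $\partial_t\bigl(\mathcal H_N|_{x_1=0}\bigr)=\partial_t\varphi\cdot(\mbox{const}\times\div\mathfrak h)|_{x_1=0}$ has vanishing right-hand side under \eqref{expan1} because the outflow transport propagates $\div\mathfrak h\equiv0$ up to the boundary trace, and under \eqref{shrink} directly by \eqref{add.bound}; integrating in $t$ from the initial constraint \eqref{15.2'} then gives $k\equiv0$. Delete the first route and apply the second in both cases, and the proof is complete.
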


The proof of Proposition \ref{p1} is absolutely the same as the corresponding one in \cite{T09} for current-vortex sheets. To prove Proposition \ref{p2} we should just literally repeat arguments from \cite{T12} with technical modifications connected with the introduction of the cut-off function $\chi$ in our change of variables. As in \cite{T12}, without loss of generality, we will below consider only case \eqref{expan1} when the plasma expands into the vacuum.\footnote{As in \cite{T12}, in this paper we postpone to a future work the consideration of the general mixed case when $\partial_t\varphi$ is indefinite in sign. For this difficult case the Maxwell system \eqref{25} is of {\it variable multiplicity}, see Remark \ref{r1}.} Moreover, we naturally assume that the ``stable'' counterpart
\begin{equation}
\partial_t\varphi < 0\label{expan}
\end{equation}
of condition \eqref{expan1} holds. This means that in our future local-in-time existence theorem we will consider initial data satisfying condition \eqref{expan} and its fulfilment for solutions will be guaranteed by a small enough time of their existence.

Under assumption \eqref{expan} the boundary $x_1=0$ is noncharacteristic for system \eqref{25}. Indeed, all the eigenvalues of the boundary matrix
\begin{equation}
\mathfrak{B} (\varphi ):=\widetilde{B}_1({\Psi}^-)|_{x_1=0}=\left(
\begin{array}{cccccc}
\varepsilon\partial_t\varphi & 0 & 0& 0 & -\partial_3{\varphi} & \partial_2{\varphi} \\
0 & \varepsilon\partial_t\varphi & 0& \partial_3{\varphi} & 0 & 1 \\
0 & 0 & \varepsilon\partial_t\varphi& -\partial_2{\varphi} & -1 & 0 \\
0 & \partial_3{\varphi} & -\partial_2{\varphi}& \varepsilon\partial_t\varphi & 0 & 0 \\
-\partial_3{\varphi} & 0 & -1& 0 & \varepsilon\partial_t\varphi & 0 \\
\partial_2{\varphi} & 1 & 0& 0 & 0 & \varepsilon\partial_t\varphi
\end{array}
\right)
\label{bm}
\end{equation}
for system \eqref{25} are non-zero:
\[
\lambda_{1,2}= \varepsilon\partial_t\varphi +
\sqrt{1 +(\partial_2\varphi)^2+(\partial_3\varphi)^2},
\]
\[
\lambda_{3,4}= \varepsilon\partial_t\varphi-
\sqrt{1 +(\partial_2\varphi)^2+(\partial_3\varphi)^2},\qquad
\lambda_{5,6} =\varepsilon\partial_t\varphi .
\]
For case \eqref{expan}, the eigenvalues $\lambda_{1,2} >0$ and $\lambda_{k} <0$, $k=3,4,5,6$. As in \cite{ST1,ST,T10}, the boundary matrix $\mathfrak{A} (U_{|x_1=0},\varphi ):=\widetilde{A}_1(U,\Psi^+ )|_{x_1=0}$ for the MHD system \eqref{24} has one positive and one negative eigenvalue and the others are zero (see also next section). That is, the boundary $x_1=0$ is characteristic for system \eqref{24}, and the whole system \eqref{24}, \eqref{25} for $U$ and $V$ has three incoming characteristics. This means that the number of boundary conditions in \eqref{26} is correct because one of them is needed for determining the function $\varphi (t,x')$.

\begin{remark}
{\rm
For case \eqref{shrink}, when we have shrinkage of the plasma region, the number of incoming characteristics is five (for this case the eigenvalues $\lambda_{5,6}$ above are positive). It means that the correct number of boundary conditions is six and problem \eqref{24}--\eqref{27} is formally underdetermined because it is missing two boundary conditions. However, we supplement \eqref{26} with the additional boundary conditions \eqref{add.bound} which enable one to prove Proposition \ref{p2} for case \eqref{shrink}. For the opposite case \eqref{expan} these additional boundary conditions are unnecessary. This makes rather difficult the analysis of the general case when for some parts of the interface the plasma expands into the vacuum and for other parts the vacuum expands into the plasma. On the other hand, locally in space either condition \eqref{expan1} or \eqref{shrink} is satisfied on the interface and this makes reasonable the consideration of one of them.
}
\label{r1}
\end{remark}

\section{Linearized problem and main results}
\label{sec:3}

\subsection{Basic state}

Let
\begin{equation}
(\widehat{U}(t,x ),\widehat{V}(t,x ),\hat{\varphi}(t,{x}'))
\label{37}
\end{equation}
be a given sufficiently smooth vector-function with $\widehat{U}=(\hat{p},\hat{u},\widehat{H},\widehat{S})$, $\widehat{V}=(\widehat{\mathcal{H}},\widehat{E})$, and
\begin{equation}
\|(\widehat{U},\widehat{V})\|_{W^2_{\infty}(\Omega_T)}+
\|\partial_1(\widehat{U},\widehat{V})\|_{W^2_{\infty}(\Omega_T)}+
\|\hat{\varphi}\|_{W^3_{\infty}(\partial\Omega_T)} \leq K,
\label{38}
\end{equation}
where $K>0$ is a constant,
\[
\Omega_T:= (-\infty, T]\times\mathbb{R}^3_+,\quad \partial\Omega_T:=(-\infty ,T]\times\{x_1=0\}\times\mathbb{R}^{2},
\]
and below all the ``hat'' values will be related to the basic state \eqref{37}.

We assume that the basic state \eqref{37} satisfies the hyperbolicity condition \eqref{5} in $\overline{\Omega_T}$,
\begin{equation}
\rho (\hat{p},\widehat{S}) >0,\quad \rho_p(\hat{p},\widehat{S}) >0 ,
\label{39}
\end{equation}
the boundary conditions in \eqref{26} except the second one on $\partial\Omega_T$,
\begin{equation}
\hat{v}_N|_{x_1=0} =\varkappa ,\quad \widehat{E}_{\tau_2}|_{x_1=0}=
\varepsilon\varkappa\widehat{\mathcal{H}}_3|_{x_1=0} , \quad \widehat{E}_{\tau_3}|_{x_1=0}=-\varepsilon
\varkappa\widehat{\mathcal{H}}_2|_{x_1=0},
\label{40}
\end{equation}
and the interior equations for $\widehat{H}$ and $\widehat{\mathcal{H}}$ in ${\Omega_T}$ contained in \eqref{24} and \eqref{25}:
\begin{equation}
\partial_t\widehat{H}+\frac{1}{\partial_1\widehat{\Phi}^+}\left\{ (\hat{w} ,\nabla )
\widehat{H} - (\hat{h} ,\nabla ) \hat{v} + \widehat{H}{\rm div}\,\hat{u}\right\} =0,
\label{41}
\end{equation}
\begin{equation}
\varepsilon\partial_t\widehat{\mathcal{H}}-\varepsilon\frac{\partial_t\widehat{\Psi}^- }{\partial_1\widehat{\Phi}^-}\partial_1\widehat{\mathcal{H}}
+\frac{1}{\partial_1\widehat{\Phi}^-}\left(b_1- b_2\partial_2\widehat{\Psi}^--b_3\partial_3\widehat{\Psi}^- \right) \widehat{E}=0,
\label{42}
\end{equation}
where $\varkappa =\partial_t\hat{\varphi},\quad \hat{w}=\hat{u}- (\partial_t\widehat{\Psi}^+,0,0)$, $\hat{u}=(\hat{v}_N,\partial_1\widehat{\Phi}^+\hat{v}_2,\partial_1\widehat{\Phi}^+\hat{v}_3)$.
At last, we assume that the basic state satisfies condition \eqref{expan}. More precisely, let
\begin{equation}
\varkappa \leq -\varepsilon_1 <0
\label{expan.lin}
\end{equation}
on $\partial\Omega_T$ with a fixed constant $\varepsilon_1$.

Moreover, without loss of generality we assume that $\|\hat{\varphi}\|_{L_{\infty}(\partial\Omega_T)}<1$. This implies
\[
\partial_1\widehat{\Phi}^+\geq 1/2,\quad \partial_1\widehat{\Phi}^-\leq -  1/2.
\]
Note that (\ref{38}) yields
\[
\|\widehat{W}\|_{W^2_{\infty}(\Omega_T)} \leq C(K),
\]
where $\widehat{W}:=(\widehat{U},\partial_1\widehat{U},\widehat{\mathcal{H}},\nabla_{t,x}\widehat{\Psi}^+,
\nabla_{t,x}\widehat{\Psi}^-)$, $\nabla_{t,x}=(\partial_t, \nabla )$, and $C=C(K)>0$ is a constant depending on $K$.

Assumptions \eqref{41} and \eqref{42} on the basic state are necessary to deduce for the linearized problem equations associated to constraints \eqref{25'}--\eqref{28} (see \cite{T09,T12}). For the same reason we have to assume that the basic state satisfies these constraints:
\begin{equation}
\widehat{H}_N|_{x_1=0}=0,\quad \widehat{\mathcal{H}}_N|_{x_1=0}=0,\label{43}
\end{equation}
\begin{equation}
\div \hat{\sf h}=0,\quad {\rm div} \,\hat{\mathfrak{h}}=0,\quad {\rm div}\,\hat{\mathfrak{e}}=0.\label{44}
\end{equation}
However, together with \eqref{41} and \eqref{42}  it is enough to assume that \eqref{43} and \eqref{44} hold only for $t=0$. Then, it follows from the proofs of Propositions \ref{p1} and \ref{p2} (see \cite{T09,T12}) that conditions \eqref{43} are true on $\partial\Omega_T$ and Eqs. \eqref{44} hold in ${\Omega_T}$. Thus, without loss of generality for the basic state we also require the fulfillment of \eqref{43} on $\partial\Omega_T$ and \eqref{44} in $\Omega_T$.

At last, we note that the first conditions in \eqref{40} and \eqref{43} endow the boundary matrix
$\diag (\mathfrak{A} (U_{|x_1=0},\varphi ),\mathfrak{B}(\varphi))$
for the linearized problem with the same structure as for the boundary matrix for the nonlinear problem \eqref{24}--\eqref{27}. Concerning the second and third conditions in \eqref{40}, we  need them for deducing a linearized version of \eqref{15.2'} for solutions of the linearized problem.

\subsection{Linearized problem}

The linearized equations for \eqref{24}--\eqref{26}  read:
\[
\mathbb{P}'(\widehat{U},\widehat{\Psi}^{+})(\delta U,\delta\Psi^{+}):=
\frac{\rm d}{{\rm d}\epsilon}\mathbb{P}(U_{\epsilon},\Psi_{\epsilon}^{+})|_{\epsilon =0}=f_{\rm I}
\quad \mbox{in}\ \Omega_T,
\]
\[
\mathbb{V}'(\widehat{V},\widehat{\Psi}^{-})(\delta V,\delta\Psi^{-}):=
\frac{\rm d}{{\rm d}\epsilon}\mathbb{V}(V_{\epsilon},\Psi_{\epsilon}^{-})|_{\epsilon =0}=f_{\rm II}
\quad \mbox{in}\ \Omega_T,
\]
\[
\mathbb{B}'(\widehat{U},\widehat{V},\hat{\varphi})(\delta U,\delta V,\delta \varphi ):=
\frac{\rm d}{{\rm d}\epsilon}\mathbb{B}(U_{\epsilon},V_{\epsilon},\varphi_{\epsilon})|_{\epsilon =0}={g}
\quad \mbox{on}\ \partial\Omega_T,
\]
where $U_{\epsilon}=\widehat{U}+ \epsilon\,\delta U$, $V_{\epsilon}=
\widehat{V}+\epsilon\,\delta V$,
$\varphi_{\epsilon}=\hat{\varphi}+ \epsilon\,\delta \varphi$, and
\[
\Psi_{\epsilon}^{\pm}(t,x ):=\chi (\pm x_1)\varphi_{\epsilon}(t,x'),\quad
\Phi_{\epsilon}^{\pm}(t,x ):=\pm x_1+\Psi_{\epsilon}^{\pm}(t,x ),
\]
\[
\delta\Psi^{\pm}(t,x ):=\chi (\pm x_1)\delta \varphi (t,x ).
\]
Here we introduce the source terms $f=(f_{\rm I}, f_{\rm II})=(f_1,\ldots ,f_{14})$ and $g=(g_1,\ldots ,g_4)$ to make the interior equations and the boundary conditions inhomogeneous.

We easily compute the exact form of the linearized equations (below we drop $\delta$):
\[
\mathbb{P}'(\widehat{U},\widehat{\Psi}^{+})(U,{\Psi}^{+} )
=
L(\widehat{U},\widehat{\Psi}^{+})U +\mathcal{C}(\widehat{U},\widehat{\Psi}^{+})
U -   \bigl\{L(\widehat{U},\widehat{\Psi}^{+})\Psi^{+}\bigr\}\frac{\partial_1\widehat{U}}{\partial_1\widehat{\Phi}^+}=f_{\rm I},
\]
\[
\mathbb{V}'(\widehat{V},\widehat{\Psi}^{-})(V,{\Psi}^{-} )
=
M(\widehat{\Psi}^{-})V  +   \bigl\{M(\widehat{\Psi}^{-})\Psi^{-}\bigr\}\frac{\partial_1\widehat{V}}{\partial_1\widehat{\Phi}^-}=f_{\rm II},
\]
\[
\mathbb{B}'(\widehat{U},\widehat{V},\hat{\varphi})(U,V,\varphi )=
{\left(
\begin{array}{c}
v_{N}-\partial_t\varphi -\hat{v}_2\partial_2\varphi-\hat{v}_3\partial_3\varphi \\[3pt]
q-(\widehat{\mathcal{H}},\mathcal{H})+(\widehat{E},E)\\[3pt]
E_{\tau_2} -\varepsilon\widehat{\mathcal{H}}_3\partial_t\varphi -\varepsilon\varkappa\mathcal{H}_3+\widehat{E}_1\partial_2\varphi\\[3pt]
E_{\tau_3}+\varepsilon\widehat{\mathcal{H}}_2\partial_t\varphi +\varepsilon\varkappa\mathcal{H}_2+\widehat{E}_1\partial_3\varphi
\end{array}
\right)_{\bigl|}}_{x_1=0}=g,
\]
where $q=p+ (\widehat{H},H)$, $v_{N}= v_1-v_2\partial_2\widehat{\Psi}^{+}-v_3\partial_3\widehat{\Psi}^{+}$, $E_{\tau_j}=E_1\partial_j\widehat{\Psi}^{-}+E_j$, and the matrix
$\mathcal{C}(\widehat{U},\widehat{\Psi}^+)$ is determined as follows:
\[
\begin{array}{r}
\mathcal{C}(\widehat{U},\widehat{\Psi}^{+})Y
= (Y ,\nabla_yA_0(\widehat{U} ))\partial_t\widehat{U}
 +(Y ,\nabla_y\widetilde{A}_1(\widehat{U},\widehat{\Psi}^{+}))\partial_1\widehat{U}
 \\[6pt]
+ (Y ,\nabla_yA_2(\widehat{U} ))\partial_2\widehat{U}
+ (Y ,\nabla_yA_3(\widehat{U} ))\partial_3\widehat{U},
\end{array}
\]
\[
(Y ,\nabla_y A(\widehat{U})):=\sum_{i=1}^8y_i\left.\left(\frac{\partial A (Y )}{
\partial y_i}\right|_{Y =\widehat{U}}\right),\quad Y =(y_1,\ldots ,y_8).
\]

The differential operators $\mathbb{P}'(\widehat{U},\widehat{\Psi}^{+})$ and $\mathbb{V}'(\widehat{V},\widehat{\Psi}^{-})$ are first-order operators in $\Psi^{+}$ and $\Psi^{-}$ respectively. As in \cite{T12}, following Alinhac \cite{Al}, we introduce the ``good unknowns''
\begin{equation}
\dot{U}:=U -\frac{\Psi^+}{\partial_1\widehat{\Phi}^+}\,\partial_1\widehat{U},\qquad
\dot{V}:=V -\frac{\Psi^-}{\partial_1\widehat{\Phi}^-}\,\partial_1\widehat{V}.
\label{45}
\end{equation}
Omitting detailed calculations, we rewrite the linearized interior equations in terms of the new unknowns \eqref{45}:
\begin{eqnarray}
L(\widehat{U},\widehat{\Psi}^+)\dot{U} +\mathcal{C}(\widehat{U},\widehat{\Psi}^+)
\dot{U} + \frac{\Psi^+}{\partial_1\widehat{\Phi}^+}\,\partial_1\bigl\{\mathbb{P}
(\widehat{U},\widehat{\Psi}^+)\bigr\}=f_{\rm I}, \label{46} \\
M(\widehat{\Psi}^-)\dot{V} + \frac{\Psi^-}{\partial_1\widehat{\Phi}^-}\,\partial_1\bigl\{\mathbb{V}
(\widehat{V},\widehat{\Psi}^-)\bigr\}=f_{\rm II}.
\label{47}
\end{eqnarray}
Dropping as in \cite{Al,ST1,ST,T09,T09.cpam,T12} the zero-order terms in $\Psi^+$  and $\Psi^-$ in \eqref{46} and \eqref{47},\footnote{In the future nonlinear analysis the dropped terms in \eqref{46} and \eqref{47} should be considered as error terms at each Nash-Moser iteration step.} we write down the final form of our linearized problem for $(\dot{U},\dot{V},\varphi )$:
\begin{align}
 L(\widehat{U},\widehat{\Psi}^+)\dot{U} +\mathcal{C}(\widehat{U},\widehat{\Psi}^+)
\dot{U} =f_{\rm I}\qquad \mbox{in}\ \Omega_T,\label{48}\\
 M(\widehat{\Psi}^-)\dot{V}  =f_{\rm II}\qquad \mbox{in}\ \Omega_T,\label{49}
\end{align}
\begin{equation}
\left(
\begin{array}{c}
\dot{v}_{N}-\partial_t\varphi -\hat{v}_2\partial_2\varphi-\hat{v}_3\partial_3\varphi +
\varphi\,\partial_1\hat{v}_{N}\\[3pt]
\dot{q}-(\widehat{\mathcal{H}},\dot{\mathcal{H}})+(\widehat{E},\dot{E})+ [\partial_1\hat{q}]\varphi\\[3pt]
\dot{E}_{\tau_2}-\varepsilon
\partial_t(\widehat{\mathcal{H}}_3\varphi ) -\varepsilon\varkappa\dot{\mathcal{H}}_3+\partial_2(\widehat{E}_1\varphi )\\[3pt]
\dot{E}_{\tau_3}+\varepsilon\partial_t(\widehat{\mathcal{H}}_2\varphi ) +\varepsilon\varkappa\dot{\mathcal{H}}_2+\partial_3(\widehat{E}_1\varphi )
\end{array}
\right)=g \qquad \mbox{on}\ \partial\Omega_T,
\label{50}
\end{equation}
\begin{equation}
(\dot{U},\dot{V},\varphi )=0\qquad \mbox{for}\ t<0,\label{51}
\end{equation}
where $[\partial_1\hat{q}]=(\partial_1\hat{q})|_{x_1=0}-(\widehat{\mathcal{H}},\partial_1\widehat{\mathcal{H}})|_{x_1=0}+(\widehat{E},\partial_1\widehat{E})|_{x_1=0}$ and all of the values with dots ($\dot{v}_{N}$, $\dot{q}$, etc.) are determined similarly to corresponding values without dots. We used  \eqref{42} taken at $x_1=0$ while writing down the last two boundary conditions in \eqref{50}. We assume that $f$ and $g$ vanish in the past and consider the case of zero initial data, which is the usual assumption.\footnote{The case of nonzero initial data is postponed to the nonlinear analysis (construction of a so-called approximate solution; see, e.g., \cite{T09}).}

\subsection{Function spaces}

Thanks to assumption \eqref{expan.lin} the boundary matrix $\widehat{\mathfrak{B}}:=\mathfrak{B} (\hat{\varphi} )$ (see \eqref{bm}) for system \eqref{49}  is non-singular. At the same time, in view of the first conditions in \eqref{40} and \eqref{43}, the boundary matrix $\widehat{\mathfrak{A}}:=\mathfrak{A}(\widehat{U}_{|x_1=0},\hat{\varphi})$ for system \eqref{48} is singular, i.e., the boundary $x_1=0$ is characteristic for the linearized MHD equations \eqref{48}. Indeed, let
\[
W=(\dot{q},\dot{v}_N,\dot{v}_2,\dot{v}_3,\dot{H}_N,\dot{H}_2,\dot{H}_3,\dot{S})
\]
and $\dot{U}=JW$. Clearly, $\det J\neq 0$. Using the first conditions in \eqref{40} and \eqref{43}, after some algebra we obtain
\begin{equation}
(\widehat{\mathfrak{A}}\dot{U},\dot{U})=2\dot{q}\dot{v}_N.
\label{bmq}
\end{equation}
It follows from \eqref{bmq} that
\begin{equation}
(\widehat{\mathfrak{A}}\dot{U},\dot{U})= (J^{T}\widehat{\mathfrak{A}}J\,W,W)=
(\mathcal{E}_{12}W,W),\qquad \mathcal{E}_{12}=\left(\begin{array}{ccccc}
0& 1 &0 & \cdots & 0 \\
1 & 0 &0 & \cdots & 0 \\
0 & 0 &0 & \cdots & 0 \\
\vdots & \vdots &\vdots & & \vdots \\
0& 0 &0 & \cdots & 0  \end{array}
 \right).
\label{35'}
\end{equation}
Hence, the matrix $\widehat{\mathfrak{A}}$ has one positive and one negative eigenvalue, and other eigenvalues are zeros.

The fact that the boundary $x_1=0$ is characteristic for the MHD system implies a natural loss of control on derivatives in the normal direction. It is known that in MHD, unlike the situation in gas dynamics (see, e.g., \cite{T09.cpam}), this loss of control on derivatives cannot be compensated and the natural functional setting is provided by the anisotropic weighted Sobolev spaces $H^m_*$.

The functional space $H^m_*$ is defined as follows (see \cite{Chen,YM,MST,Sec}):
$$
H^{m}_*(\mathbb{R}^3_+ ):=\left\{ u\in L_2(\mathbb{R}^3_+) \ | \
\partial^{\alpha}_*\partial_1^k u\in L_2(\mathbb{R}^3_+ )\quad \mbox{if}\quad |\alpha
|+2k\leq m\, \right\},
$$
where $m\in\mathbb{N}$, $\partial^{\alpha}_*=(\sigma \partial_1)^{\alpha_1}\partial_2^{\alpha_2}
\partial_3^{\alpha_3}\,$, and  $\sigma (x_1)\in C^{\infty}(\mathbb{R}_+)$ is
a monotone increasing function such that $\sigma (x_1)=x_1$ in a neighborhood of
the origin and $\sigma (x_1)=1$ for $x_1$ large enough. The space $H^m_*(\mathbb{R}^3_+ )$ is normed by
\[
\|u\|_{m,*}^2= \sum_{|\alpha |+2k\leq m}
\|\partial^{\alpha}_*\partial_1^k u\|_{L_2(\mathbb{R}^3_+)}^2.
\]
We also define the space
\[
H^m_*(\Omega_T)=\bigcap_{k=0}^{m}H^k((-\infty ,T],H^{m-k}_*(\mathbb{R}^3_+ ))
\]
equipped with the norm
\[
[u]^2_{m,*,T}=\int_{-\infty}^T\nt u (t)\nt^2_{m,*}dt,
\quad \mbox{where}\quad
\nt u (t)\nt^2_{m,*}=\sum\limits_{j=0}^m\|\partial_t^ju(t)\|^2_{m-j,*}.
\]
Within this paper we use the space $H^m_*(\Omega_T)$ mainly for $m=1$. Clearly,
the norm for $H^1_*(\Omega_T)$ reads
\[
[u]^2_{1,*,T}=\int_{\Omega_T}\left( u^2 +(\partial_tu)^2 +(\sigma\partial_1u)^2
+(\partial_2u)^2+(\partial_3u)^2\right) dtdx.
\]

\subsection{Reduced linearized problem with homogeneous Maxwell equations and \protect\\ boundary conditions}

Following (with technical modifications) arguments in \cite{T12} we can pass from the unknown  $(\dot{U},\dot{V})$ to such a new unknown $(U^{\natural},V^{\natural})$ that it satisfies problem \eqref{48}--\eqref{51} with $f_{\rm II}=0$, $g=0$ and $f_{\rm I}$ replaced by a vector-function $F = (F_1,\ldots ,F_4, 0,0,0,F_8)$ obeying the estimate
\begin{equation}
[F]_{1,*,T}
 \leq C\left\{ [f_{\rm I}]_{3,*,T}+\|f_{\rm II}\|_{H^{3}(\Omega_T)}+\|g\|_{H^{3}(\partial\Omega_T)}\right\}.
 \label{88}
\end{equation}
Here and later on $C$ is a constant that can change from line to line, and it may depend from another constants. In particular, in \eqref{88} the constant $C$ depends on $K$ and $T$. Dropping for convenience the indices $^{\natural}$, we write down  our reduced linearized problem for the new unknown $({U},{V}):=(U^{\natural},V^{\natural})$ and the interface perturbation $\varphi$:
\begin{align}
& \widehat{A}_0\partial_t{U}+\sum_{j=1}^{3}\widehat{A}_j\partial_j{U}+
\widehat{\mathcal{C}}{U}=F \qquad \mbox{in}\ \Omega_T,\label{89}
\\
& \varepsilon\partial_t{V}+\widehat{B}_1\partial_1{V}+{B}_2\partial_2{V}+
{B}_3\partial_3{V}=0 \qquad \mbox{in}\ \Omega_T,\label{90}
\end{align}
\begin{subequations}
\begin{align}
& \partial_t\varphi={v}_{N}-\hat{v}_2\partial_2\varphi-\hat{v}_3\partial_3\varphi +
\varphi\,\partial_1\hat{v}_{N}, \label{91a} \\[3pt]
& {q}=(\widehat{\mathcal{H}},{\mathcal{H}})-(\widehat{E},{E})-  [ \partial_1\hat{q}] \varphi  ,\label{91b} \\[3pt]
& {E}_{\tau_2}=
\varepsilon\partial_t(\widehat{\mathcal{H}}_3\varphi )
-\partial_2(\widehat{E}_1\varphi )+\varepsilon\varkappa{\mathcal{H}}_3,\label{91c}  \\[3pt]
& {E}_{\tau_3}=-\varepsilon\partial_t(\widehat{\mathcal{H}}_2\varphi )
-\partial_3(\widehat{E}_1\varphi ) -\varepsilon\varkappa{\mathcal{H}}_2\qquad  \mbox{on}\ \partial\Omega_T,\label{91d}
\end{align}\label{91}
\end{subequations}
\begin{equation}
({U},{V},\varphi )=0\qquad \mbox{for}\ t<0,\label{92}
\end{equation}
where $\widehat{A}_{\alpha}:={A}_{\alpha}(\widehat{U})$ $(\alpha =0,2,3)$,
$\widehat{A}_1:=\widetilde{A}_1(\widehat{U},\widehat{\Psi}^+)$,
$\widehat{\mathcal{C}}:=\mathcal{C}(\widehat{U},\widehat{\Psi}^+)$, and $\widehat{B}_1:=\widetilde{B}_1(\widehat{\Psi}^-)$.

Since the boundary conditions \eqref{91}, equations \eqref{90} and the fifth, sixth and seventh equations in \eqref{89} are homogeneous, following arguments in \cite{T12} (with technical modifications necessary for our non-relativistic settings), we can prove that solutions to problem \eqref{89}--\eqref{92} satisfy
\begin{equation}
{\rm div}\,{h}=0\qquad\mbox{in}\ \Omega_T,
\label{93}
\end{equation}
\begin{equation}
{\rm div}\,{\mathfrak{h}}=0,\quad {\rm div}\,{\mathfrak{e}}=0\qquad\mbox{in}\ \Omega_T,
\label{94}
\end{equation}
\begin{equation}
{H}_{N}=\widehat{H}_2\partial_2\varphi +\widehat{H}_3\partial_3\varphi -
\varphi\,\partial_1\widehat{H}_{N}\qquad\mbox{on}\ \partial\Omega_T,
\label{95}
\end{equation}
\begin{equation}
{\mathcal{H}}_{N}=\widehat{\mathcal{H}}_2\partial_2\varphi +\widehat{\mathcal{H}}_3\partial_3\varphi -
\varphi\,\partial_1\widehat{\mathcal{H}}_{N}\qquad\mbox{on}\ \partial\Omega_T,
\label{96}
\end{equation}
where
\[
h=(H_N,H_2\partial_1\widehat{\Phi}^+,H_3\partial_1\widehat{\Phi}^+),\quad
H_N=H_1-H_2\partial_2\widehat{\Psi}^+-H_3\partial_3\widehat{\Psi}^+,
\]
\[
\mathfrak{h}=(\mathcal{H}_{N},\mathcal{H}_2\partial_1\widehat{\Phi}^{-},\mathcal{H}_3\partial_1\widehat{\Phi}^{-}),\quad
\mathcal{H}_{N}=\mathcal{H}_1-\mathcal{H}_2\partial_2\widehat{\Psi}^{-}-\mathcal{H}_3\partial_3\widehat{\Psi}^{-},
\]
\[
\mathfrak{e}=({E}_{N},{E}_2\partial_1\widehat{\Phi}^{-},{E}_3\partial_1\widehat{\Phi}^{-}),\quad {E}_{N}={E}_1-{E}_2\partial_2\widehat{\Psi}^{-}-{E}_3\partial_3\widehat{\Psi}^{-}.
\]
Moreover, again referring to \cite{T12} for detailed arguments, we can estimate solutions of problem \eqref{48}--\eqref{51} through solutions of problem \eqref{89}--\eqref{92}:
\begin{multline}
[\dot{U}]_{1,*,T}+\|\dot{V}\|_{H^{1}(\Omega_T)}
\leq [{U}]_{1,*,T}+\|{V}\|_{H^{1}(\Omega_T)} \\+C\left\{ [f_{\rm I}]_{3,*,T}+\|f_{\rm II}\|_{H^{3}(\Omega_T)}+
\|g\|_{H^{3}(\partial\Omega_T)}\right\}.
\label{88a}
\end{multline}

Taking into account estimate \eqref{88a}, from now on we concentrate on the study of the reduced linearized problem \eqref{89}--\eqref{92} keeping in mind that its well-posedness under suitable assumptions on the basic state and the regularity of the data $F$ implies the well-posedness of problem \eqref{48}--\eqref{51} for which the regularity of the data should be consistent with estimate \eqref{88}.

\subsection{Constant coefficients linearized problem for a planar interface}

If we ``freeze'' coefficients of problem \eqref{89}--\eqref{92}, drop zero-order terms, assume that $\partial_2\hat{\varphi}=\partial_3\hat{\varphi}=0$, and in the change of variables take $\chi\equiv 1$, then we obtain a constant coefficients linear problem which is the result of the linearization of the original nonlinear free boundary value problem
\eqref{4}, \eqref{13}, \eqref{16}, \eqref{19}, \eqref{20} about its {\it exact} constant solution
\begin{align}
& U=\widehat{U}=(\hat{p},\hat{v},\widehat{H},\widehat{S})={\rm const}\qquad \mbox{for}\ x_1>\varkappa t,\nonumber \\
& V=\widehat{V}=(\widehat{\mathcal{H}},\widehat{E})={\rm const}\qquad \mbox{for}\ x_1<\varkappa t\nonumber
\end{align}
for the planar plasma-vacuum interface $x_1 =\varkappa t$, where $\varkappa$ is a constant interface speed. This exact constant solution satisfies \eqref{40} and \eqref{43}:
\begin{equation}
\hat{v}_1=\varkappa ,\quad \widehat{H}_1=\widehat{\mathcal{H}}_1=0,\quad \widehat{E}_2=\varepsilon\varkappa\widehat{\mathcal{H}}_3,\quad
\widehat{E}_3=-\varepsilon\varkappa\widehat{\mathcal{H}}_2.
\label{98}
\end{equation}
Moreover, since the original nonlinear equations were already written in a dimensionless form (see \eqref{12}), without loss of generality we can suppose that
\begin{equation}
\hat{\rho}=1\quad\mbox{and}\quad\hat{a}=1
\label{98'}
\end{equation}
(then $M=\sqrt{\hat{v}_2^2+\hat{v}_3^2}$ is the Mach number).

Taking into account \eqref{98} and \eqref{98'}, we have the following constant coefficients problem:
\begin{align}
& \partial_t{U}+\sum_{j=1}^{3}\widehat{A}_j\partial_j{U}=F \qquad \mbox{in}\ \Omega_T,\label{99}
\\
& \varepsilon\partial_t{V}+\widehat{B}_1\partial_1{V}+{B}_2\partial_2{V}+
{B}_3\partial_3{V}=0 \qquad \mbox{in}\ \Omega_T,\label{100}
\end{align}
\begin{equation}
\left\{ \begin{array}{ll}
\partial_t\varphi={v}_1-\hat{v}_2\partial_2\varphi-\hat{v}_3\partial_3\varphi , & \\[6pt]
{q}=\widehat{\mathcal{H}}_2({\mathcal{H}}_2+\varepsilon\varkappa E_3) +\widehat{\mathcal{H}}_3({\mathcal{H}}_3-\varepsilon\varkappa E_2)-\widehat{E}_1{E}_1  , & \\[6pt]
{E}_{2}=\varepsilon\widehat{\mathcal{H}}_3\partial_t \varphi
-\widehat{E}_1\partial_2 \varphi +\varepsilon\varkappa{\mathcal{H}}_3, & \\[6pt]
{E}_{3}=-\varepsilon\widehat{\mathcal{H}}_2\partial_t\varphi
-\widehat{E}_1\partial_3 \varphi  -\varepsilon\varkappa{\mathcal{H}}_2,& \quad \mbox{on}\ \partial\Omega_T,
\end{array}\right.\label{101}
\end{equation}
\begin{equation}
({U},{V},\varphi )=0\qquad \mbox{for}\ t<0,\label{102}
\end{equation}
and solutions to problem \eqref{99}--\eqref{102} satisfy
\begin{equation}
{\rm div}\,H=0,\quad {\rm div}^-\,{\mathcal{H}}=0,\quad {\rm div}^-\,E=0\qquad\mbox{in}\ \Omega_T,
\label{103}
\end{equation}
\begin{equation}
{H}_{1}=\widehat{H}_2\partial_2\varphi +\widehat{H}_3\partial_3\varphi ,\quad
{\mathcal{H}}_{1}=\widehat{\mathcal{H}}_2\partial_2\varphi +\widehat{\mathcal{H}}_3\partial_3\varphi \qquad\mbox{on}\ \partial\Omega_T,
\label{104}
\end{equation}
where ${\rm div}^-\,a=-\partial_1a_1+\partial_2a_2+\partial_3a_3$ for any vector $a=(a_1,a_2,a_3)$ and
\[
\widehat{A}_1=\left( \begin{array}{cccccccc} 0&1&0&0&0&0&0&0\\[6pt]
1&0&0&0&0&\widehat{H}_2&\widehat{H}_3&0\\
0&0&0&0&0&0&0&0\\ 0&0&0&0&0&0&0&0\\
0&0&0&0&0&0&0&0\\
0&\widehat{H}_2&0&0&0&0&0&0\\
0&\widehat{H}_3&0&0&0&0&0&0\\ 0&0&0&0&0&0&0&0\\
\end{array} \right) ,\quad
\widehat{B}_1=\left(
\begin{array}{cccccc}
\varepsilon\varkappa & 0 & 0& 0 & 0 & 0 \\
0 & \varepsilon\varkappa & 0& 0 & 0 & 1 \\
0 & 0 & \varepsilon\varkappa& 0 & -1 & 0 \\
0 & 0 & 0& \varepsilon\varkappa & 0 & 0 \\
0 & 0 & -1& 0 & \varepsilon\varkappa & 0 \\
0 & 1 & 0& 0 & 0 & \varepsilon\varkappa
\end{array}
\right),
\]
\[
\widehat{A}_2=\left( \begin{array}{cccccccc} {\hat{v}_2}&0&1&0&0&0&0&0\\[6pt]
0&\hat{v}_2&0&0&-{\widehat{H}_2}&0&0&0\\ 1&0&\hat{v}_2&0&0&0&{\widehat{H}_3}&0\\ 0&0&0&
\hat{v}_2&0&0&-{\widehat{H}_2}&0\\
0&-{\widehat{H}_2}&0&0&{\hat{v}_2}&0&0&0\\
0&0&0&0&0&{\hat{v}_2}&0&0\\
0&0&{\widehat{H}_3}&-{\widehat{H}_2}&0&0&{\hat{v}_2}&0\\0&0&0&0&0&0&0&\hat{v}_2
\end{array} \right) ,
\]
\[
\widehat{A}_3=\left( \begin{array}{cccccccc} {\hat{v}_3}&0&0&1&0&0&0&0\\[6pt]
 0&\hat{v}_3&0&0&-{\widehat{H}_3}&0&0&0\\ 0&0&\hat{v}_3&0&0&-{\widehat{H}_3}&0&0\\ 1&0&0&
\hat{v}_3&0&{\widehat{H}_2}&0&0\\
0&-{\widehat{H}_3}&0&0&{\hat{v}_3}&0&0&0\\
0&0&-{\widehat{H}_3}&{\widehat{H}_2}&0&{\hat{v}_3}&0&0\\
0&0&0&0&0&0&{\hat{v}_3}&0\\ 0&0&0&0&0&0&0&\hat{v}_3
\end{array} \right).
\]

From the physical point of view, the well-posedness of problem \eqref{99}--\eqref{102} can be interpreted as the stability of a planar relativistic plasma-vacuum interface (or the  macroscopic stability of a corresponding nonplanar interface).

\subsection{Main results}

We are now in a position to state the main results of this paper.

\begin{theorem}
Let the basic state \eqref{37} satisfies assumptions \eqref{38}--\eqref{44}. Let also
\begin{equation}
{|\widehat{H}_2\widehat{\mathcal{H}}_3-\widehat{H}_3\widehat{\mathcal{H}}_2|
_{|}}_{x_1=0}
\geq \varepsilon_2> 0,\label{52}
\end{equation}
where $\varepsilon_2$ is a fixed constant. Then there exists a positive constant $\widehat{E}_1^*$ such that if the basic state satisfies the condition $|\widehat{E}_{1}|<\widehat{E}_1^*$ on $\partial\Omega_T$, then  problem \eqref{89}--\eqref{92} has a unique solution $(U,V,\varphi)\in H^1_*(\Omega_T)\times H^1(\Omega_T)\times H^{3/2}(\partial\Omega_T)$ for all $F \in H^1_*(\Omega_T)$ which vanish in the past. Moreover, the solution obeys
the a priori estimate
\begin{equation}
[{U}]_{1,*,T}+\|{V}\|_{H^{1}(\Omega_T)}+\|\varphi\|_{H^{3/2}(\partial\Omega_T)}\leq C[F]_{1,*,T},
\label{54}
\end{equation}
where $C=C(K,T,\varepsilon_2 )>0$ is a constant independent of the data $F$.
\label{t1}
\end{theorem}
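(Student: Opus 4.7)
The plan is two-stage: first derive the a priori estimate \eqref{54} for smooth solutions (which gives uniqueness and continuous dependence) and then construct solutions via a duality argument combined with tangential mollification. For the $L^2$ part of the a priori estimate, I multiply \eqref{89} by $U$ and \eqref{90} by $V$ and integrate. Since both systems are Friedrichs symmetric and $\widehat{\mathcal C}$ is bounded under \eqref{38}, the interior contributions are absorbed via Gronwall by $\|F\|^2_{L^2(\Omega_T)}$, and the essential step is to control the boundary quadratic form
\[
\mathcal I_{\mathrm{bd}} = \int_{\partial\Omega_T}(\widehat{\mathfrak A}\,U,U)\,\dd x'\dd t + \int_{\partial\Omega_T}(\widehat{\mathfrak B}\,V,V)\,\dd x'\dd t,
\]
whose plasma part reduces by \eqref{35'} to $2\int \dot q\,\dot v_N\,\dd x'\dd t$. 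Using \eqref{91a}--\eqref{91d} to eliminate $\dot v_N$, $\dot q$, $\dot E_{\tau_2}$, $\dot E_{\tau_3}$ and \eqref{96} for $\dot{\mathcal H}_N$, $\mathcal I_{\mathrm{bd}}$ becomes a quadratic form in $\partial_t\varphi$, $\partial_2\varphi$, $\partial_3\varphi$ and the remaining tangential vacuum traces. This is precisely the computation to be performed in Section \ref{sec:4} in the planar constant-coefficient case, and positivity of that form is the sufficient stability condition; under \eqref{52} and $|\widehat E_1|<\widehat E_1^*$ it holds uniformly on $\partial\Omega_T$, and a frozen-coefficient perturbation together with Gronwall extends the conclusion to the full variable-coefficient problem.

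\textbf{Upgrading to $H^1_*$ and recovering $\varphi$.} Next I would apply the same energy argument to the tangentially differentiated problem, i.e., to $\partial^\alpha_* U$ and $\partial^\alpha_* V$ with $|\alpha|=1$, absorbing commutators with $\widehat A_j$, $\widehat B_1$ and $\widehat{\mathcal C}$ via \eqref{38}. The absence of control on the plasma normal derivative $\partial_1 U$ is precisely the loss accommodated by the anisotropic norm $\nt\cdot\nt_{1,*}$; for the Maxwell block, invertibility of $\widehat{\mathfrak B}$ under \eqref{expan.lin} lets me recover the full $H^1$ norm of $V$ from tangential derivatives and equation \eqref{90}. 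The front $\varphi$ is then controlled in $H^{3/2}(\partial\Omega_T)$ by viewing \eqref{91a} as a transport equation for $\varphi$ with source given by the trace of $U$, and using the regularity of this trace together with identities \eqref{95}--\eqref{96} for the transversal magnetic traces.

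\textbf{Existence and main obstacle.} For existence, I would write the formal adjoint of \eqref{89}--\eqref{91}, observe that it has the same symmetric hyperbolic structure with compatible boundary signature (because \eqref{91a}--\eqref{91d} form a maximal boundary condition under the sufficient stability assumption), and derive an $L^2$ dual estimate by the same energy technique. Lax-type duality then produces a weak $L^2$ solution to \eqref{89}--\eqref{92}, which is upgraded to the asserted $H^1_*$-regularity by a tangential Friedrichs mollifier in $(t,x')$ followed by passage to the limit using \eqref{54}. The principal difficulty lies in the coercivity of the boundary quadratic form: the indefinite term $2\dot q\,\dot v_N$ forced by the characteristic nature of the plasma boundary must be cancelled by contributions from the vacuum side through \eqref{91b}--\eqref{91d}, and this cancellation succeeds only when \eqref{52} is combined with smallness of $\widehat E_1$; quantifying the explicit threshold $\widehat E_1^*$ is a constructive computation based on the matrix \eqref{bm}. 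A secondary technical point is that the tangential mollification must preserve the propagated constraints \eqref{93}--\eqref{96}, which one handles by smoothing only in directions tangential to $\partial\Omega_T$.
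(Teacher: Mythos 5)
Your proposal has two genuine gaps, both at the heart of the argument. First, the a priori estimate: you claim that after substituting \eqref{91a}--\eqref{91d} the $L^2$ boundary quadratic form becomes positive under \eqref{52} and smallness of $\widehat E_1$. That is not what happens. With the secondary symmetrization of the Maxwell system (the choice $\nu=\varepsilon\hat v$ in \eqref{105}--\eqref{106}, which you omit entirely and without which the vacuum boundary term does not combine with the plasma term $2\dot q\,\dot v_N$ in a usable way), the boundary form $\mathcal Q$ collapses to a single product $\hat\mu\,\varphi E_1|_{x_1=0}$ modulo exact tangential derivatives; the resulting inequality \eqref{109} \emph{cannot be closed in $L^2$}, as the paper states explicitly. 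The stability condition is not positivity of a boundary form: one must differentiate tangentially, use the non-collinearity condition \eqref{52} to resolve $\nabla_{t,x}\varphi$ through the traces of $v_1$, $H_1$, $\mathcal H_1$ (the ellipticity of the interface symbol, \eqref{111}), convert the boundary integrals $\int\hat\mu\,\partial_\alpha\varphi\,\partial_\alpha E_1$ into volume integrals by parts, and express normal derivatives via the interior equations; only then does the condition appear, as positive definiteness of the order-$42$ \emph{volume} quadratic form $\mathfrak A_0+\widehat E_1{\rm Q}_0$ in \eqref{122}. Your description assigns \eqref{52} the wrong role and skips the mechanism that actually produces the threshold $\widehat E_1^*$.

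Second, existence. A duality argument "by the same energy technique" presupposes an $L^2$ (or dual) estimate that, as just noted, does not close: the boundary conditions \eqref{91} are not dissipative for the original first-order system, so neither the direct nor the adjoint problem admits the standard Friedrichs/Lax treatment. The paper's route is different and essential: it first decouples the front by a fixed-point argument in $H^{3/2}(\partial\Omega_T)$ (treating $\varphi$ in \eqref{rbc} as given and using the transport equation \eqref{91a} to close the loop), reduces to homogeneous boundary conditions, and then shows that the \emph{prolonged} system for the vector $Z$ of tangential derivatives --- augmented by the auxiliary symmetric systems that encode each integration by parts, e.g.\ \eqref{ts} --- is symmetric hyperbolic under \eqref{122} with genuinely \emph{dissipative} boundary conditions, so that the existence theory for characteristic boundaries of constant multiplicity (Rauch, Secchi) applies. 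Your proposal also never explains how the unknown $\varphi$ enters the duality framework at all. The tangential-mollification remark is reasonable but secondary; the missing ideas are the secondary symmetrization, the passage to the prolonged dissipative system, and the fixed point for the front.
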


\begin{remark}
{\rm In fact, we prove Theorem \ref{t1} for $\varkappa \leq 0$, i.e., including the case when the interface speed is zero somewhere (or even everywhere) on the unperturbed interface. But, we prefer to keep assumption \eqref{expan.lin} in the formulation of Theorem \ref{t1} for its future usage in the nonlinear analysis of the plasma-vacuum problem.
}
\label{r2}
\end{remark}

We note that the existence of solutions of the linearized problem was not proved in \cite{T12}, but our arguments towards the proof of existence in this paper are directly applicable in relativistic settings in \cite{T12} as well. In other words, we have the side result that is the well-posedness of the linearized problem for relativistic plasma-vacuum interfaces under a suitable stability condition from \cite{T12}.

In view of \eqref{40},
\[
|\widehat{E}_{1}| =\frac{|\widehat{E}|}{\sqrt{1+(\partial_2\hat{\varphi})^2+(\partial_3\hat{\varphi})^2}}+\mathcal{O}(\varepsilon )\qquad\mbox{on}\ \partial\Omega_T.
\]
That is, Theorem \ref{t1} says that if the unperturbed vacuum electric field on the interface is small enough, then under the non-collinearity condition \eqref{52} the linearized problem is well-posed. The natural question is whether a large enough vacuum electric field can make the linearized problem ill-posed. As in relativistic settings in \cite{T12}, the answer on this question is positive, but in our non-relativistic case we can analyze the influence of the vacuum electric field on well-posedness in more detail (at least, for particular cases of the unperturbed flow).

Clearly, the ill-posedness of the corresponding ``frozen'' coefficients linearized problem indicates the ill-posedness of the problem with variable coefficients. In other words, we may study the influence of the constant vacuum electric field on the well-posedness of the constant coefficients linearized problem \eqref{99}--\eqref{102}. Its ill-posedness will mean the violent (Kelvin-Helmholz) instability of a planar interface. Mathematically this means that there is a range of admissible parameters of problem \eqref{99}--\eqref{102} for which the Kreiss-Lopatinski condition \cite{Kreiss,Maj84} is violated.

The test of the Kreiss-Lopatinski condition is equivalent to the usual normal modes analysis (spectral analysis), i.e., the construction of an Hadamard-type ill-posedness example. Unfortunately, due to principal technical difficulties a complete normal modes analysis of problem \eqref{99}--\eqref{101} seems impossible (even numerically). In this paper we restrict ourselves to the following particular case of the uniform unperturbed flow for which a part of the analysis can be done analytically:
\begin{equation}
\hat{v}_1=\hat{v}_2=0,\quad \widehat{H}_2=\widehat{\mathcal{H}}_3=0,\quad \widehat{H}_3\widehat{\mathcal{H}}_2\neq 0.
\label{pcase}
\end{equation}
Strictly speaking, since $\hat{v}_1=\varkappa$ (see \eqref{98}), we have $\varkappa =0$. However, by continuity argument this can be extended to the case $|\varkappa | \ll 1$.

The (rough) sufficient stability condition $|\widehat{E}_1|\ll 1$ for a planar interface can be {\it essentially specified} by a numerical analysis of the positive definiteness of some matrix of order 42 (see the next section). But, for the particular case \eqref{pcase} this can be even done analytically. This enables us to compare our (specified) sufficient stability condition with the spectral stability condition describing the whole stability domain. We summarize our results for the particular case \eqref{pcase} in the following proposition.

\begin{proposition}
Let the uniform (piecewise constant) flow with the planar plasma-vacuum interface $x_1=0$ satisfies conditions \eqref{pcase}. If the flow parameters satisfy the inequalities
\begin{equation}
\left\{
\begin{array}{l}
\widehat{E}_{1}^2<\ds\frac{1}{2}\wh{\vH}{}_2^2,\\[9pt]
\widehat{E}_{1}^2 \bigg(1+\ds\frac{\hat{v}_3^2}{\wh{H}{}_3^2} \bigg)<1,\\[12pt]
2 \widehat{E}_{1}^4 \bigg( \ds\frac{\wh{\vH}{}_2^2 + \wh{H}{}_3^2 +
\hat{v}_3^2}{\wh{H}{}_3^2 \wh{\vH}{}_2^2}\bigg) - \widehat{E}_{1}^2 \bigg( 1+
2 \ds\frac{\wh{H}{}_3^2+\wh{\vH}{}_2^2}{\wh{H}{}_3^2
\wh{\vH}{}_2^2}+\ds\frac{\hat{v}_3^2}{\wh{H}{}_3^2} \bigg)+1>0,\\[12pt]
2 \widehat{E}_{1}^4 \bigg( \ds\frac{3+\hat{v}_3^2}{\wh{H}{}_3^2} \bigg) -
\widehat{E}_{1}^2 \bigg( 3+\hat{v}_3^2+2
\ds\frac{1+\hat{v}_3^2}{\wh{H}{}_3^2}  \bigg) +1>0,
\end{array}\right.\label{posdef}
\end{equation}
then the planar interface is linearly stable and solutions to problem \eqref{99}--\eqref{102} obey the energy estimate \eqref{54}. For the static case $\hat{v}_3=0$ inequalities \eqref{posdef} are equivalent to the condition
\begin{equation}
\widehat{E}_{1}^2<\min\left\{ \frac{1}{3},\,\frac{\widehat{\mathcal{H}}_2^2\widehat{H}_3^2}{2(\widehat{\mathcal{H}}_2^2+\widehat{H}_3^2)}\right\}
\label{v3=0}
\end{equation}
and if
\begin{equation}
\widehat{E}_{1}^2>\widehat{\mathcal{H}}_2^2,
\label{anins}
\end{equation}
then the planar interface is violently unstable. Moreover, condition \eqref{anins} is sufficient for instability whereas the whole instability domain is wider than that described by \eqref{anins} and can be found numerically. The whole instability domain (for the static case $\hat{v}_3=0$) is represented in Fig. 1 for certain fixed values of $\widehat{H}_3$.
\label{ppcase}
\end{proposition}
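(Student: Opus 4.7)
The plan is to treat the stability part (the inequalities \eqref{posdef}, their simplification \eqref{v3=0}) and the instability part (the Hadamard example yielding \eqref{anins}, and the numerical picture of Fig. 1) separately, the first by specializing the general energy method of Section \ref{sec:4} to the symmetric configuration \eqref{pcase}, and the second by explicit normal modes analysis of the constant coefficients problem \eqref{99}--\eqref{102}.

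First, for the stability inequalities: Section \ref{sec:4} will show that an $L^2$ a priori estimate for \eqref{99}--\eqref{102} follows from the positive definiteness of a symmetric matrix built from quadratic combinations of $\widehat{H},\widehat{\mathcal{H}},\widehat{E}_1,\hat v$; in full generality this matrix is of order $42$, but under \eqref{pcase} all entries involving $\widehat{H}_2$ or $\widehat{\mathcal{H}}_3$ vanish, and the matrix splits into much smaller blocks. Applying Sylvester's criterion to the non-trivial blocks reduces positive definiteness to conditions on a short list of leading principal minors; a direct computation of these minors produces exactly the four inequalities \eqref{posdef}. The energy argument of Section \ref{sec:4} then yields \eqref{54} automatically in this regime. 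For the static case $\hat v_3=0$ one simplifies \eqref{posdef} by hand: the second inequality becomes $\widehat{E}_1^2<1$ and is implied by the first; the third, viewed as a quadratic in $y=\widehat{E}_1^2$, factors as
\[
(1-y)\Bigl(1-2y\,\tfrac{\widehat{\mathcal{H}}_2^2+\widehat{H}_3^2}{\widehat{\mathcal{H}}_2^2\widehat{H}_3^2}\Bigr)>0,
\]
so it reduces under (1) to $y<\widehat{\mathcal{H}}_2^2\widehat{H}_3^2/(2(\widehat{\mathcal{H}}_2^2+\widehat{H}_3^2))$; the fourth, also quadratic in $y$, factors similarly and gives $y<1/3$. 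Intersecting these conditions with (1) gives \eqref{v3=0}.

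For the instability side, I perform a Kreiss--Lopatinski normal modes analysis. Taking the ansatz
\[
(U,V,\varphi)(t,x)=e^{\tau t+i(\eta_2 x_2+\eta_3 x_3)}\bigl(\tilde U(x_1),\tilde V(x_1),\tilde\varphi\bigr),\qquad \mathrm{Re}\,\tau>0,
\]
and imposing decay of $\tilde U$ as $x_1\to+\infty$ and of $\tilde V$ as $x_1\to-\infty$, I solve the interior ODE systems for the MHD block \eqref{99} and the Maxwell block \eqref{100}. The dimensions of the stable subspaces reproduce the three incoming characteristics counted after Remark \ref{r1}, so inserting the resulting boundary traces into the four conditions \eqref{101} yields a homogeneous linear system for the integration constants and $\tilde\varphi$ whose determinant is the Lopatinski function $\Delta(\tau,\eta_2,\eta_3)$. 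Violation of the Kreiss--Lopatinski condition amounts to a zero of $\Delta$ with $\mathrm{Re}\,\tau>0$ for some real $(\eta_2,\eta_3)$. Under \eqref{pcase} with $\hat v_3=0$ the choice $\eta_3=0$ already makes $\Delta$ explicitly computable, and I verify that \eqref{anins} forces such a zero, producing an exponentially growing mode and hence violent instability. The full instability region in Fig.~1 is then obtained by continuing the real roots of $\Delta$ numerically through the parameter plane, showing that the threshold is strictly lower than the closed-form bound \eqref{anins}.

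The principal technical obstacle is the algebraic size of $\Delta$ and the correct identification of the decaying branches of its characteristic roots: even under the reduced symmetry \eqref{pcase}, $\Delta$ involves two independent square-root branches (one from the MHD acoustic/Alfv\'en roots in $x_1>0$, one from the Maxwell root in $x_1<0$), and analytic continuation across their branch cuts can produce spurious zeros. Carefully selecting the physically admissible branches is what makes the closed-form sufficient instability condition \eqref{anins} attainable; the enlarged region visible in Fig.~1 is exactly the set of parameters for which the non-elementary roots of $\Delta$ cross into $\{\mathrm{Re}\,\tau>0\}$, a crossing which can only be tracked numerically.
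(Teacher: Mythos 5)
Your proposal follows essentially the same route as the paper: the four inequalities \eqref{posdef} are obtained by specializing the order-42 positive-definiteness condition $\mathfrak{A}_0+\widehat{E}_1{\rm Q}_0>0$ from the energy method of Section \ref{sec:4} to the case \eqref{pcase}, and the instability statement \eqref{anins} together with Fig.~1 comes from the normal modes / Lopatinski determinant analysis with $\gamma_3=0$ plus a numerical sweep over the wave-vector angle, exactly as in Section \ref{sec:5}. The only cosmetic difference is that the paper verifies positive definiteness by computing the characteristic polynomial \eqref{px}, which factors into four pieces whose roots (in $y=(1-x)^2$) must be less than one, rather than by Sylvester's criterion on block minors; both are equivalent checks on the same symmetric matrix.
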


\begin{figure}[t]
\centering
\begin{minipage}[h]{0.46\linewidth}
    \center{\includegraphics[width=\linewidth]{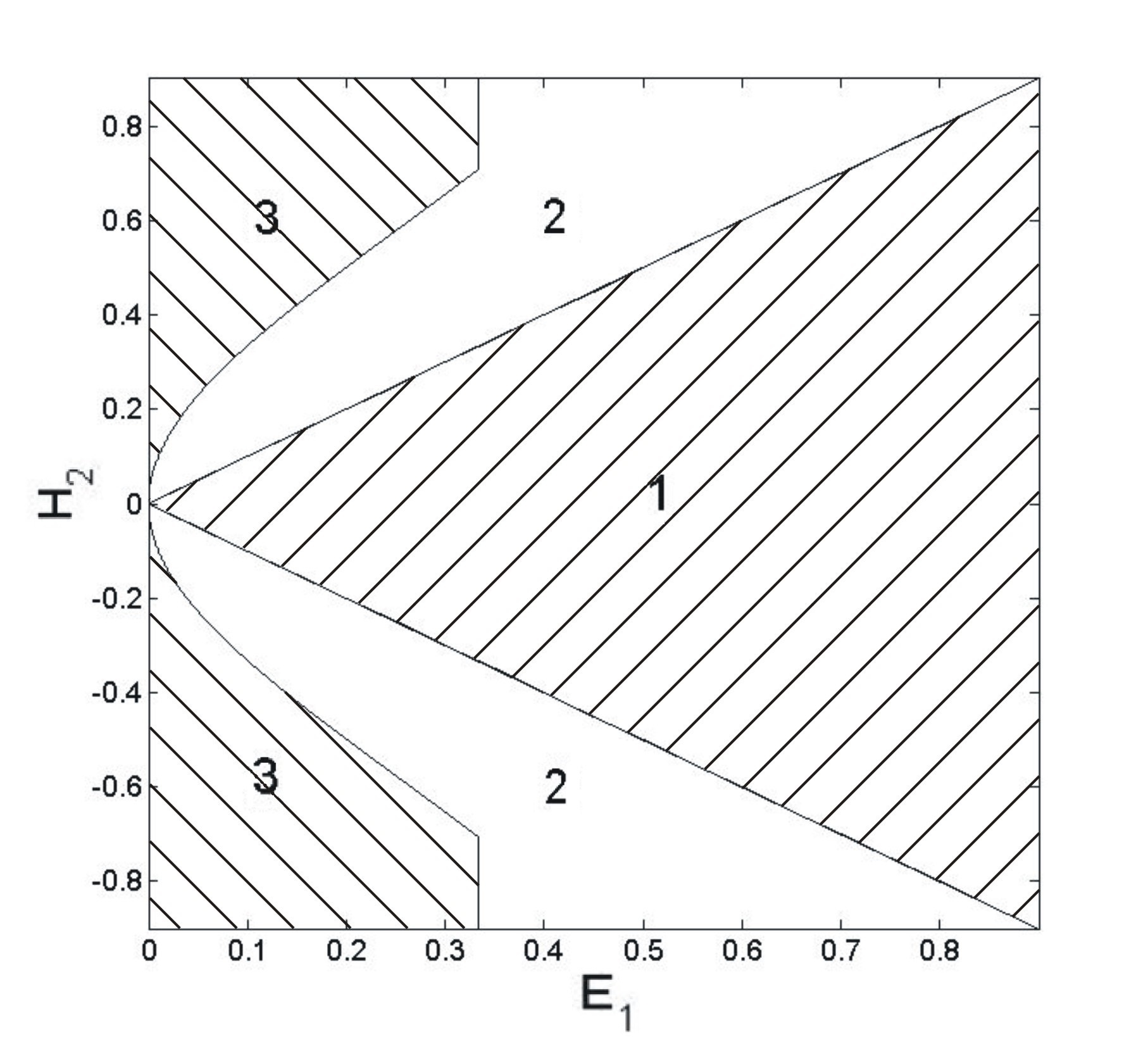} \\[-7pt]a) $\wh{H}_3 = 1$}
\end{minipage}
\begin{minipage}[h]{0.46\linewidth}
    \center{\includegraphics[width=\linewidth]{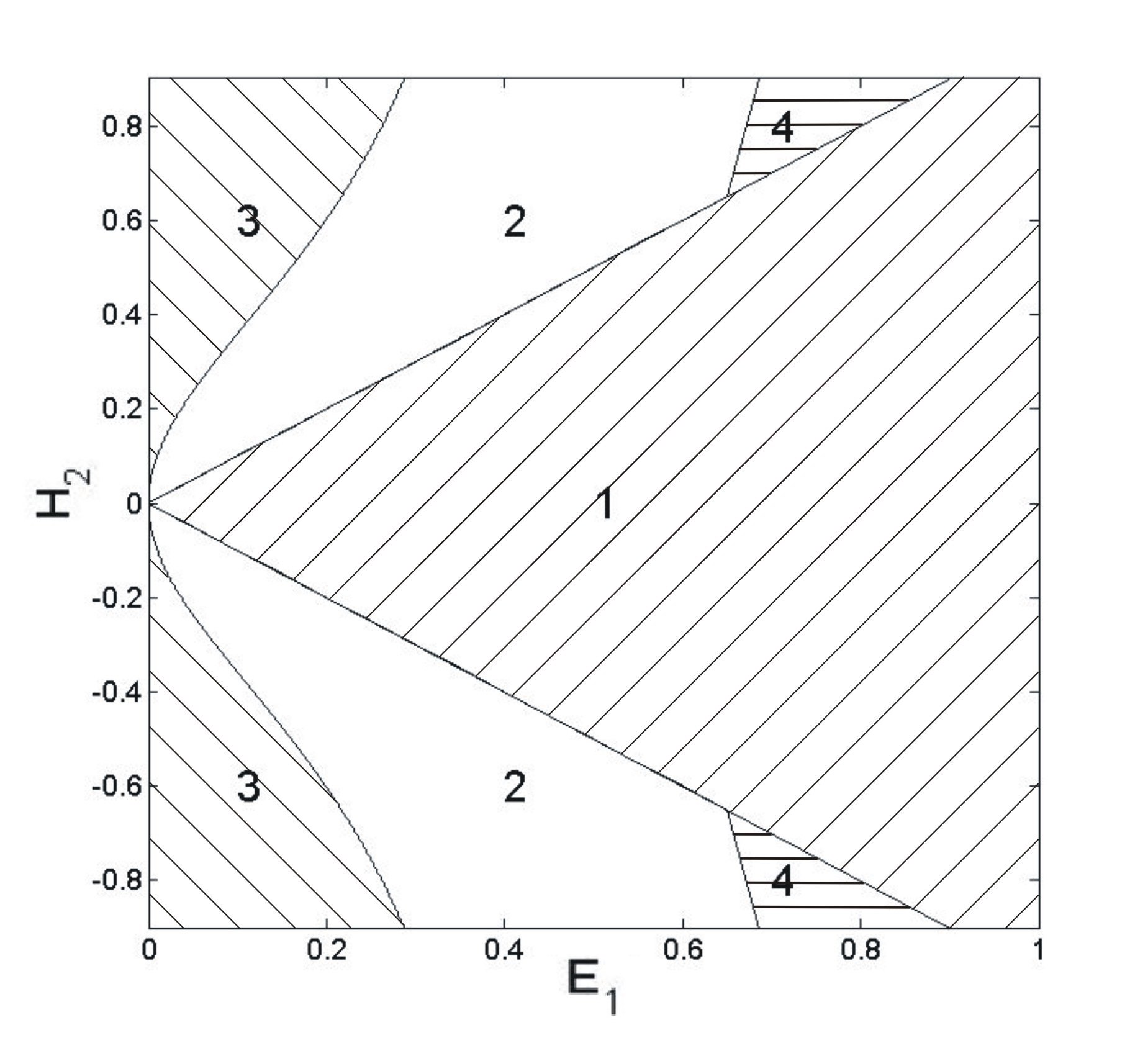} \\[-7pt]b) $\wh{H}_3 = 2/3$}
\end{minipage}
\vfill
\begin{minipage}[h]{0.46\linewidth}
    \center{\includegraphics[width=\linewidth]{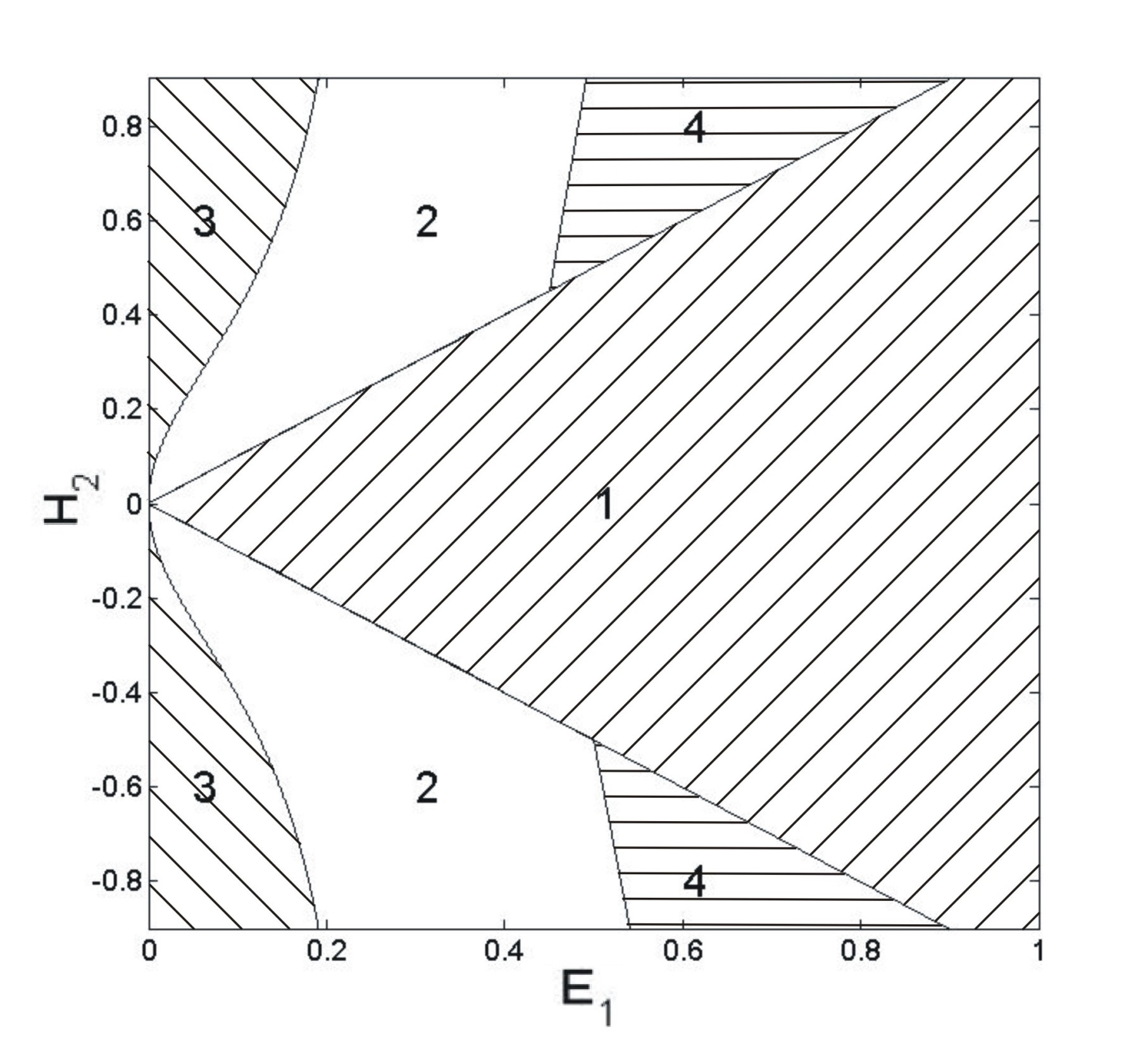} \\[-7pt]c) $\wh{H}_3 = 0.5$}
\end{minipage}
\begin{minipage}[h]{0.46\linewidth}
    \center{\includegraphics[width=\linewidth]{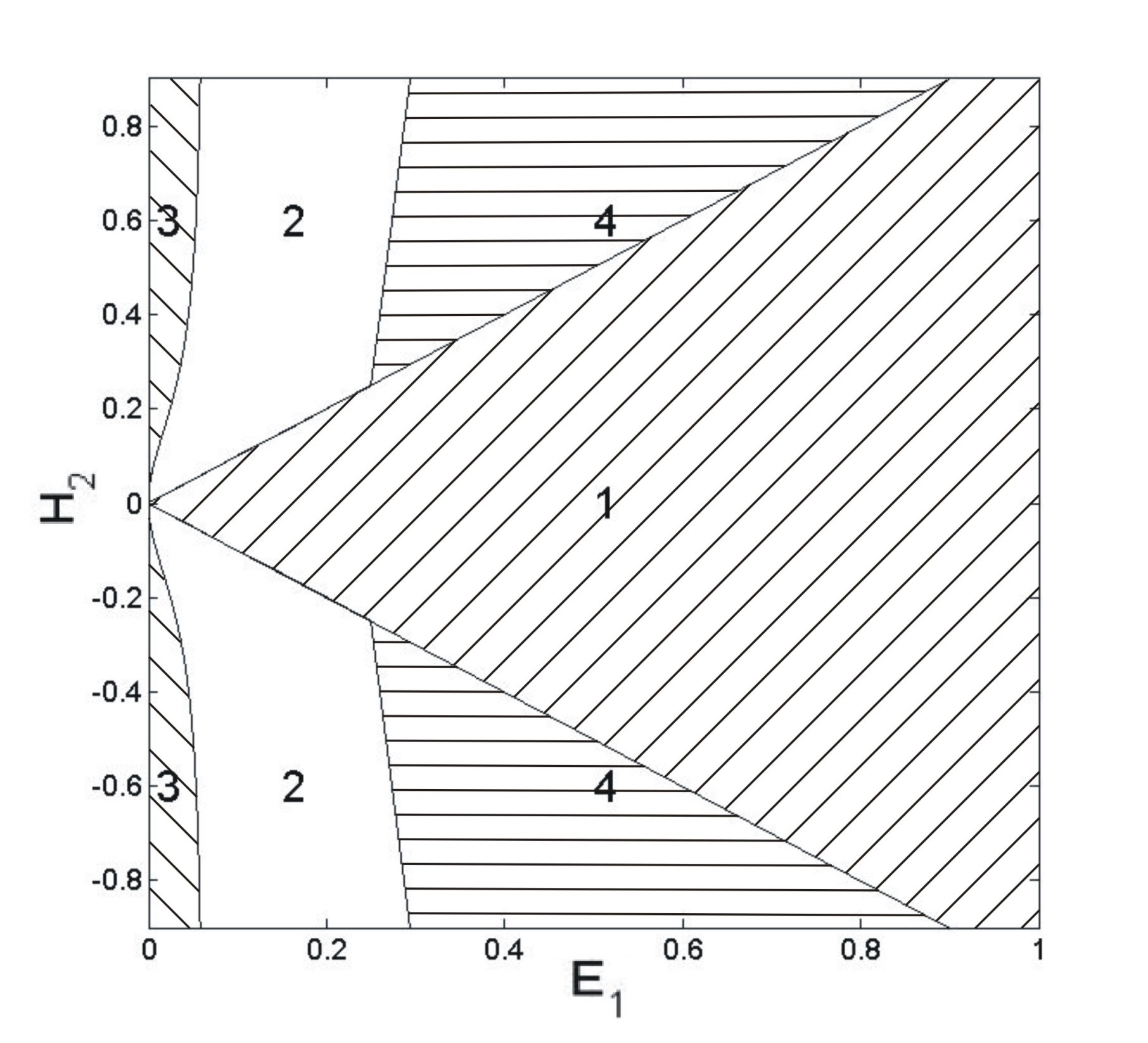} \\[-7pt]d) $\wh{H}_3 = 0.25$}
\end{minipage}
\caption{Domains 1 and 3 are the domains described by the sufficient instability condition \eqref{anins} and the sufficient stability condition \eqref{v3=0} respectively. The union of domains 2 and 3 is the whole domain of stability, and the union of domains 1 and 4 is the  whole domain of instability.}
\end{figure}

As we can see in Fig.1, the domain described by the sufficient stability condition \eqref{v3=0} (domain 3) is relatively big in comparison with the whole stability domain (the union of domains 2 and 3). It is important that the sufficient stability condition \eqref{v3=0} and its non-static counterpart \eqref {posdef} are found analytically for the particular case \eqref{pcase}. But, even in the general case of the unperturbed flow a numerical analysis of the positive definiteness  of some matrix of order 42  appearing in our energy method (see the next section) seems straightforward. At the same time, we are able to find the whole stability domain by a numerical realization of the normal modes analysis only for relatively simple particular cases like \eqref{pcase} whereas in the general case this is connected with very expensive numerical calculations. Moreover, the number of dimensionless parameters of the unperturbed flow is really big, and a complete numerical test of the Kreiss-Lopatinski condition seems unrealizable in practice.

Thus, the main goal of our calculations for the particular case \eqref{pcase} was to show that the unshaded interlayer in Fig.1 (domain 2) between the instability domain (the union of domains 1 and 4) and the part of the whole stability domain found by the energy method is not, in a certain (loose) sense, extremely big (in comparison with domain 3). Moreover, our energy method enables one to derive the a priori estimate \eqref{54}. To do the same by starting from the normal modes analysis we should  construct a Kreiss-type symmetrizer \cite{Kreiss,Maj84} that is really difficult and often technically impossible in MHD (especially, for the case of characteristic boundary).

\section{Energy method for the constant coefficients problem}
\label{sec:4}

For the derivation of the a priori estimate \eqref{54} we should make technical modifications in the relativistic case in \cite{T12} where a dissipative energy integral for a prolonged linearized system was constructed. These modifications are connected only with unimportant peculiarities of our problem in comparison with its relativistic counterpart. We could just try to explain these modifications by referring to the arguments in \cite{T12}. However, this would be, first, very inconvenient for the reader and, second, the main reason that we prefer to do not drop the process of derivation of estimate \eqref{54} is that in Section \ref{sec:6} we will need details of the construction of the dissipative energy integral for the proof of existence of solutions. At the same time, here, unlike \cite{T12}, we prove the  a priori estimate only for  the case of constant coefficients. In Section \ref{sec:6} we mainly concentrate on the proof of existence of solutions and drop a detailed description of the extension of the a priori estimate to the case of variable coefficients because corresponding arguments are really similar to those in \cite{T12}.

For problem \eqref{99}--\eqref{102} we first derive the a priori estimate \eqref{54} under the (rough) sufficient stability condition that the constant
$|\widehat{E}_1|$ is small enough and the vectors $(\widehat{H}_2,\widehat{H}_3)$ and $(\widehat{\mathcal{H}}_2,\widehat{\mathcal{H}}_3)$ are nonzero and nonparallel to each other (cf. \eqref{52}), i.e.,
\begin{equation}
\widehat{H}_2\widehat{\mathcal{H}}_3-\widehat{H}_3\widehat{\mathcal{H}}_2\neq 0.\label{52a}
\end{equation}
After that on the example of the particular case \eqref{pcase} we show that the rough condition $|\widehat{E}_1|\ll 1$ can be essentially specified and for this case we get the sufficient stability condition \eqref{posdef}.

As in \cite{T12}, the crucial role in deriving the a priori estimate for the linearized problem is played by a secondary symmetrization of the Maxwell equations \eqref{13}. Following \cite{T12}, we equivalently rewrite the symmetric system \eqref{13},
\[
\varepsilon\partial_tV +\sum_{j=1}^3B_j\partial_jV=0,
\]
as the new symmetric system
\begin{equation}
\varepsilon\mathcal{B}_0\partial_tV +\sum_{j=1}^3\mathcal{B}_j\partial_jV=0,
\label{20'}
\end{equation}
provided that the hyperbolicity condition $\mathcal{B}_0>0$ holds, where
\[
\mathcal{B}_0=\left(\begin{array}{cccccc}
1 & 0 & 0& 0 & \nu_3 & -\nu_2 \\
0 & 1 & 0& -\nu_3 & 0 & \nu_1 \\
0 & 0 & 1& \nu_2 & -\nu_1 & 0 \\
0 & -\nu_3 & \nu_2& 1 & 0 & 0 \\
\nu_3 & 0 & -\nu_1& 0 & 1 & 0 \\
-\nu_2 & \nu_1 & 0& 0 & 0 & 1
\end{array} \right),\quad
\mathcal{B}_1=
\left(\begin{array}{cccccc}
\nu_1 & \nu_2 & \nu_3& 0 & 0 & 0 \\
\nu_2 & -\nu_1 & 0& 0 & 0 & -1 \\
\nu_3 & 0 & -\nu_1& 0 & 1 & 0 \\
0 & 0 & 0& \nu_1 & \nu_2 & \nu_3 \\
0 & 0 & 1& \nu_2 & -\nu_1 & 0 \\
0 & -1 & 0& \nu_3 & 0 & -\nu_1
\end{array} \right),
\]
\[
\mathcal{B}_2=
\left(\begin{array}{cccccc}
-\nu_2 & \nu_1 & 0& 0 & 0 & 1 \\
\nu_1 & \nu_2 & \nu_3& 0 & 0 & 0 \\
0 & \nu_3 & -\nu_2& -1 & 0 & 0 \\
0 & 0 & -1& -\nu_2 & \nu_1 & 0 \\
0 & 0 & 0& \nu_1 & \nu_2 & \nu_3 \\
1 & 0 & 0& 0 & \nu_3 & -\nu_2
\end{array} \right),\quad
\mathcal{B}_3=
\left(\begin{array}{cccccc}
-\nu_3 & 0 & \nu_1& 0 & -1 & 0 \\
0 & -\nu_3 & \nu_2& 1 & 0 & 0 \\
\nu_1 & \nu_2 & \nu_3& 0 & 0 & 0 \\
0 & 1 & 0& -\nu_3 & 0 & \nu_1 \\
-1 & 0 & 0& 0 & -\nu_3 & \nu_2 \\
0 & 0 & 0& \nu_1 & \nu_2 & \nu_3
\end{array} \right),
\]
and  $\nu_i$ are arbitrary functions $\nu_i(t,x)$ satisfying the condition $\mathcal{B}_0>0$, i.e.,
\begin{equation}
|\nu |<1,\label{21'}
\end{equation}
with the vector-function $\nu = (\nu_1, \nu_2, \nu_3)$.

Using the secondary symmetrization \eqref{20'} of the Maxwell equations, we rewrite system \eqref{100} as
\begin{equation}
\varepsilon\mathcal{B}_0\partial_t{V}+\widehat{\mathcal{B}}_1\partial_1{V}+\mathcal{B}_2\partial_2{V}+
\mathcal{B}_3\partial_3{V}=0 \qquad \mbox{in}\ \Omega_T,\label{105}
\end{equation}
where $\widehat{\mathcal{B}}_1=\varepsilon\varkappa \mathcal{B}_0-\mathcal{B}_1$, and our choice of $\nu_i$ is the following:
\begin{equation}
\nu =\varepsilon\hat{v}= \varepsilon (\hat{v}_1 ,\hat{v}_2,\hat{v}_3)
\label{106}
\end{equation}
($\hat{v}_1=\varkappa$, see \eqref{98}). Since $\varepsilon$ is very small, the hyperbolicity condition \eqref{21'} holds. By standard arguments of the energy method applied to the symmetric hyperbolic systems \eqref{99} and \eqref{105} (we multiply \eqref{105} by $\varepsilon^{-1}$), we obtain
\begin{equation}
I(t)+ 2\int_{\partial\Omega_t}\mathcal{Q}\,{\rm d}x'{\rm d}s\leq C
\left( \| F\|^2_{L_2(\Omega_T)} +\int_0^tI(s)\,{\rm d}s\right),
\label{107}
\end{equation}
where
\[
I(t)=\int_{\mathbb{R}^3_+}\left(|U|^2+(\mathcal{B}_0V,V)\right){\rm d}x,
\quad
\mathcal{Q}=-\frac{1}{2}(\widehat{A}_1U,U)|_{x_1=0}-\frac{1}{2\varepsilon}(\widehat{\mathcal{B}}_1V,V)|_{x_1=0}.
\]

We write down the quadratic form $\mathcal{Q}$:
\begin{align*}
\mathcal{Q}= &\,\bigl\{-\varkappa(\mathcal{H}_2^2+\mathcal{H}_3^2+ E_2^2+ E_3^2)+\mathcal{H}_1(\hat{v}_2\mathcal{H}_2+\hat{v}_3\mathcal{H}_3)
+E_1(\hat{v}_2E_2+\hat{v}_3E_3)
\\
&\;\;
+\varepsilon\varkappa E_1(\hat{v}_3\mathcal{H}_2-\hat{v}_2\mathcal{H}_3)
+\varepsilon\varkappa \mathcal{H}_1(\hat{v}_2E_3-\hat{v}_3E_2)\\
&\;\;
+(\varepsilon^{-1}+\varepsilon\varkappa^2)(\mathcal{H}_3E_2-\mathcal{H}_2E_3) -qv_1\bigr\}\bigr|_{x_1=0}\,.
\end{align*}
Using the boundary conditions \eqref{101} and the second condition in \eqref{104}, after long calculations (which are similar to those in \cite{T12}) we get
\[
\mathcal{Q}=\hat{\mu}\left.\left\{ E_1\partial_t\varphi +(\varepsilon^{-1}\mathcal{H}_2+\varkappa E_3)\partial_3\varphi -(\varepsilon^{-1}\mathcal{H}_3-\varkappa E_2)\partial_2\varphi \right\}\right|_{x_1=0},
\]
where
\begin{equation}
\hat{\mu}=\widehat{E}_1+\varepsilon\hat{v}_2\widehat{\mathcal{H}}_3-\varepsilon\hat{v}_3\widehat{\mathcal{H}}_2=\widehat{E}_1+\mathcal{O}(\varepsilon ).
\label{mu}
\end{equation}
We rewrite $\mathcal{Q}$ as follows:
\[
\mathcal{Q}=\partial_t\left(\hat{\mu}\varphi E_1|_{x_1=0} \right)+\partial_2\left(\hat{\mu}\varphi(\varkappa E_2-\varepsilon^{-1}\mathcal{H}_3)|_{x_1=0} \right)  + \partial_3\left(\hat{\mu}\varphi (\varepsilon^{-1}\mathcal{H}_2+\varkappa E_3)|_{x_1=0}\right) +\mathcal{Q}_0,
\]
and in view the third equation in \eqref{103}, the rest $\mathcal{Q}_0=0$  because it is the left-hand side of the fourth equation in \eqref{100} considered on the boundary and multiplied by $-\varepsilon^{-1}\hat{\mu}\varphi$:
\[
\mathcal{Q}_0=-\hat{\mu}\varphi \left.\left( \partial_tE_1+\varkappa\partial_1E_1+\varepsilon^{-1}\partial_3\mathcal{H}_2-
\varepsilon^{-1}\partial_2\mathcal{H}_3 \right)\right|_{x_1=0}=0.
\]

It follows from \eqref{107} that
\begin{equation}
I(t)+ 2\int_{\mathbb{R}^2}\hat{\mu}\varphi E_1|_{x_1=0}{\rm d}x'\leq C
\left( \| F\|^2_{L_2(\Omega_T)} +\int_0^tI(s)\,{\rm d}s\right),
\label{109}
\end{equation}
and we see that we are not able to ``close'' the estimate in $L_2$. But, if we differentiate systems \eqref{99} and \eqref{105} with respect to $x^2$, $x^3$ and $t$, we obtain the following counterparts of \eqref{109} for the first-order tangential derivatives of $U$ and $V$:
\begin{equation}
I_{\alpha}(t)+ 2\int_{\mathbb{R}^2}\hat{\mu}\,\partial_{\alpha}\varphi \, \partial_{\alpha}E_1|_{x_1=0}{\rm d}x'\leq C
\left( [F]^2_{1,*T} +\int_0^tI_{\alpha}(s)\,{\rm d}s\right),
\label{110}
\end{equation}
where $\alpha =0,2,3$,
\[
I_{\alpha}(t)=\int_{\mathbb{R}^3_+}\left(|\partial_{\alpha}U|^2+(\mathcal{B}_0\partial_{\alpha}V,\partial_{\alpha}V)\right){\rm d}x\quad \mbox{and}\quad \partial_0:=\partial_t.
\]
The terms $\partial_{\alpha}\varphi \, \partial_{\alpha}E_1|_{x_1=0}$ appearing in the boundary integral are, in some sense, lower-order terms and below we explain how to treat them by using our important assumption \eqref{52a} and passing to the volume integral.

Thanks to  \eqref{52a}, from \eqref{104} and the first boundary condition in \eqref{101} we get
\begin{equation}
\left\{
\begin{array}{l}
{\displaystyle
\partial_t\varphi =a_1^0{H}_1|_{x_1=0}+a_2^0{\mathcal{H}}_1|_{x_1=0} +v_1|_{x_1=0},}\\[12pt]
{\displaystyle
\partial_2\varphi =\frac{(\widehat{\mathcal{H}}_3{H}_1-\widehat{H}_3{\mathcal{H}}_1)|_{x_1=0}
}{\widehat{H}_2\widehat{\mathcal{H}}_3-\widehat{H}_3\widehat{\mathcal{H}}_2}=
a_1^1{H}_1|_{x_1=0}+a_2^1{\mathcal{H}}_1|_{x_1=0},}\\[12pt]
{\displaystyle
\partial_3\varphi=-\frac{(\widehat{\mathcal{H}}_2{H}_1-\widehat{H}_2{\mathcal{H}}_1)|_{x_1=0}}{\widehat{H}_2\widehat{\mathcal{H}}_3-\widehat{H}_3\widehat{\mathcal{H}}_2}=
a_1^2{H}_1|_{x_1=0}+a_2^2{\mathcal{H}}_1|_{x_1=0},}
\end{array}\right.
\label{111}
\end{equation}
where the constants  $a_j^{\beta}$ can be easily written down. Then \eqref{111} implies
\begin{multline}
(\partial_t\varphi \, \partial_tE_1+\partial_2\varphi \, \partial_2E_1+\partial_3\varphi \, \partial_3E_1)|_{x_1=0}\\
=v_1\partial_tE_1|_{x_1=0}+\sum_{\beta =0,2,3}\left. \left(a_1^{\beta}{H}_1\partial_{\beta}E_1+a_2^{\beta}{\mathcal{H}}_1\partial_{\beta}E_1\right)\right|_{x_1=0}\,.
\label{115}
\end{multline}

To treat the integrals of the lower-order terms like $H_1\partial_kE_1|_{x_1=0}$, $k=2,3$, contained in the right-hand side of \eqref{115} we use the same standard arguments as in \cite{T09,T09.cpam,T12}, i.e., we pass to the volume integral and integrate by parts:
\[
\int_{\mathbb{R}^2}H_1\partial_kE_1|_{x_1=0}\,{\rm d}x'=-\int_{\mathbb{R}^3_+}\partial_1(H_1\partial_kE_1)\,{\rm d}x  =
\int_{\mathbb{R}^3_+}\left\{ \partial_kH_1\partial_1E_1-\partial_1H_1\partial_kE_1 \right\}{\rm d}x.
\]
Concerning the terms like $H_1\partial_tE_1|_{x_1=0}$, we again pass to the volume integral, but before integration by parts we apply the third equation in \eqref{103}:
\begin{multline*}
\int_{\mathbb{R}^2}H_1\partial_tE_1|_{x_1=0}\,{\rm d}x'=-\int_{\mathbb{R}^3_+}\partial_1(H_1\partial_tE_1)\,{\rm d}x  =
-\int_{\mathbb{R}^3_+}\partial_1H_1\partial_tE_1 {\rm d}x \\
-\int_{\mathbb{R}^3_+}H_1 \partial_t\partial_1E_1 {\rm d}x=\int_{\mathbb{R}^3_+}\left\{ \partial_2H_1\partial_tE_2+\partial_3H_1\partial_tE_3-\partial_1H_1\partial_tE_1 \right\}{\rm d}x.
\end{multline*}
After that the normal ($x_1$-) derivatives of $v_1$, $H_1$ and $V$ can be expressed through tangential derivatives by using the facts that the boundary $x_1=0$ is noncharacteristic for the ``vacuum'' system \eqref{100} and $v_1$ and $H_1$ are noncharacteristic ``plasma'' unknowns.

Namely, taking into account \eqref{bmq}, \eqref{35'} and the first equation in \eqref{103}, we have
\begin{align}
\left(\begin{array}{c}
\partial_1v_1 \\[3pt]
\partial_1q
\end{array}
\right) &= -\left\{J^{\sf T}\left(\partial_t{U}+\widehat{A}_2\partial_2{U}+\widehat{A}_3\partial_3{U}+F\right)\right\}_{(12)},\label{118}
\\
\partial_1H_1 &=-\partial_2H_2-\partial_3H_3,\label{119}
\end{align}
where $\{\cdots\}_{(12)}$ denotes the first two components of the vector inside braces.\footnote{In fact, below we use only the first row in \eqref{118}.} As in \cite{T12},
we could resolve the ``vacuum'' system for the normal derivative $\partial_1V$ under assumption \eqref{expan.lin} guaranteeing $\det\widehat{B}_1\neq 0$.
However, this would lead to the appearance of $\varepsilon^{-1}$ as a coefficient by tangential derivatives of $V$ and give a more restrictive sufficient stability condition (with a constant $\widehat{E}_1^*$ of order less or equal to $\varepsilon$ in Theorem \ref{t1}). To avoid this we replace the first and fourth equations in system \eqref{100} with the second and third equations in \eqref{103} and then from the resulting system express $\partial_1V$ through tangential derivatives of $V$:
\begin{equation}
\partial_1{V}=\frac{1}{1-\varepsilon^2\varkappa^2}\begin{pmatrix}
(1-\varepsilon^2\varkappa^2)(\partial_2\mathcal{H}_2 +\partial_3\mathcal{H}_3)\\ \varepsilon\varkappa (\partial_t\mathcal{H}_2+\partial_3E_1)-\partial_2\mathcal{H}_1-\partial_tE_3\\
\varepsilon\varkappa (\partial_t\mathcal{H}_3-\partial_2E_1)-\partial_3\mathcal{H}_1+\partial_tE_2\\
(1-\varepsilon^2\varkappa^2)(\partial_2E_2 +\partial_3E_3)\\
\varepsilon\varkappa (\partial_tE_2-\partial_3\mathcal{H}_1)-\partial_2E_1+\partial_t\mathcal{H}_3 \\
\varepsilon\varkappa (\partial_tE_3+\partial_2\mathcal{H}_1)-\partial_3E_1-\partial_t\mathcal{H}_2
\end{pmatrix}.
\label{117}
\end{equation}
Note that \eqref{117} is valid also for the case $\varkappa =0$ (cf. Remark \ref{r2}).

Then, \eqref{115}--\eqref{117} and the above calculations yield
\begin{equation}
2\sum_{\alpha =0,2,3}\int_{\mathbb{R}^2}\hat{\mu}\partial_{\alpha}\varphi \, \partial_{\alpha}E_1|_{x_1=0}{\rm d}x'  =\int_{\mathbb{R}^3_+}\hat{\mu}({\rm Q}_0Z,Z){\rm d}x -2\int_{\mathbb{R}^3_+}\hat{\mu}F_1\partial_tE_1{\rm d}x,
\label{120}
\end{equation}
where ${\rm Q}_0$ is a quadratic matrix of order 42 which elements can be explicitly written down if necessary and  $Z= (\partial_tU,\partial_tV,\partial_2U,\partial_2V,\partial_3U,\partial_3V)$.

Summing up inequalities \eqref{110} and taking into account \eqref{120}, we obtain
\begin{equation}
\int_{\mathbb{R}^3_+}\left( (\mathfrak{A}_0+ \hat{\mu}{\rm Q}_0)Z,Z\right){\rm d}x
-2\int_{\mathbb{R}^3_+}\hat{\mu}F_1\partial_tE_1{\rm d}x
\leq C
\left( [F]^2_{1,*T} +\sum_{\alpha =0,2,3}\int_0^tI_{\alpha}(s)\,{\rm d}s\right),
\label{121}
\end{equation}
where $\mathfrak{A}_0=\diag (I,{\mathcal{B}}_0,\ldots ,I,{\mathcal{B}}_0)$ is the block-diagonal positive definite matrix of order 42. Using the Young inequality and the elementary inequality
\[
\| F(t)\|^2_{L_2(\mathbb{R}^3_+)}\leq C [F]^2_{1,*,t}
\]
following from the trivial relation
\[
\frac{\rm d}{{\rm d}t}\,\|F(t)\|^2_{L_2(\mathbb{R}^3_+)}=2\int_{\mathbb{R}^3_+}(F,\partial_tF)\,{\rm d}x,
\]
we estimate:
\begin{equation}
2\int_{\mathbb{R}^3_+}\hat{\mu}F_1\partial_tE_1{\rm d}x\leq C\left\{ \delta\, \|Z(t)\|^2_{L_2(\mathbb{R}^3_+)} + \frac{1}{\delta}\,\| F\|^2_{1,*,T} \right\},
\label{124}
\end{equation}
where $\delta$ is a small positive constant, and \eqref{121} and \eqref{124} give
\begin{equation}
\int_{\mathbb{R}^3_+}\left( (\mathfrak{A}_0+ \hat{\mu}{\rm Q}_0)Z,Z)\right){\rm d}x
\leq C
\left( \delta\, \|Z(t)\|^2_{L_2(\mathbb{R}^3_+)} +[F]^2_{1,*T} +\|Z\|^2_{L_2(\Omega_t)}\right).
\label{121'}
\end{equation}

If $\mathfrak{A}_0+ \hat{\mu}{\rm Q}_0>0$ or, in view of \eqref{mu}, if
\begin{equation}
\mathfrak{A}_0+ \widehat{E}_1{\rm Q}_0>0,
\label{122}
\end{equation}
then choosing $\delta$ to be small enough, from inequality  \eqref{121'} we derive
\begin{equation}
\|Z(t)\|^2_{L_2(\mathbb{R}^3_+)}
\leq C
\left( [F]^2_{1,*T} +\|Z\|^2_{L_2(\Omega_t)}\right).
\label{125}
\end{equation}
We can already apply to \eqref{125} Gronwall's lemma and get an a priori estimate for tangential derivatives. After that it is easy to insert the $L_2$ norm of the solution in this estimate and get finally an estimate in a conormal Sobolev space. However, for adapting the arguments of this section to the case of variable coefficients it is better to follow the plan of \cite{T12} and by using \eqref{125} obtain the inequality
\begin{multline}
\nt U(t)\nt ^2_{1,*}+\nt V(t)\nt ^2_{H^1(\mathbb{R}^3_+)}+
\nt\varphi (t)\nt^2_{H^{3/2}(\mathbb{R}^2)} \\ \leq C\Bigl\{ [F]^2_{1,*,T}
+  [{U}]^2_{1,*,t} +\|V\|^2_{H^1(\Omega_t)}+\|\varphi\|^2_{H^{3/2}(\partial\Omega_t)}\Bigr\}.
\label{134}
\end{multline}
We drop the arguments towards the proof of \eqref{134} because they are relatively standard and absolutely similar to those in \cite{T12}. In fact, here we  improve the corresponding inequality from \cite{T12} by replacing the $H^1$ norm of $\varphi$ with its $H^{3/2}$ norm. Taking into account \eqref{111}, this can be easily done because we control $H^{1/2}$ norms of the traces of the noncharacteristic unknowns $v_1$, $H_1$ and $\mathcal{H}_1$. Applying Gronwall's lemma, from \eqref{134} we derive the desired a priori estimate \eqref{54}, provided that the {\it sufficient stability condition} \eqref{122} holds.

Since the matrix $\mathfrak{A}_0$ is positive definite, condition \eqref{122} is satisfied for $|\widehat{E}_1|\ll 1$. Clearly, in the general case it is technically impossible to analyze the positive definiteness of the matrix $\mathfrak{A}_0+ \widehat{E}_1{\rm Q}_0$ of order 42 analytically. Here we do this for the particular case \eqref{pcase}. In this case the coefficients in \eqref{111} read
\[
a_1^3=\ds\frac{1}{\wh{H}_3},\quad a_2^2=\ds\frac{1}{\wh{\vH}_2},\quad
a_1^0=-\ds\frac{\hat{v}_3}{\wh{H}_3},\quad a_2^3=0,\quad a_1^2=0,\quad a_2^0=0,
\]
and, for example, \eqref{118} has the simple form
\[
\pr{1}v_1= - \pr{t}p -\pr{2}v_2 - \pr{3} v_3 - \hat{v}_3 \pr{3}p,\qquad \pr{1}q= - \pr{t}v_1 -\hat{v}_3\pr{3}v_1 + \wh{H}_3\pr{3} H_1
\]
and we have \eqref{117} with $\varkappa =0$.

After long calculations we get that the positive definiteness of the matrix $\mathfrak{A}_0+ \widehat{E}_1{\rm Q}_0$ is equivalent to the fact that all of the roots of the following polynomial are positive:
\begin{multline}
P(x)=\left\{(1-x)^2 -
2\ds\frac{\wh{E}_1^2}{\wh{\vH}{}_2^2}\right\}\left\{(1-x)^2-\wh{E}_1^2\left(1+
\ds\frac{\hat{v}{}_3^2}{\wh{H}{}_3^2}\right)\right\}\\
\times\left\{ \left((1-x)^2-\wh{E}_1^2\right)\left((1-x)^2-
2\wh{E}_1^2\ds\frac{\wh{\vH}{}_2^2+ \wh{H}{}_3^2}{\wh{\vH}{}_2^2
\wh{H}{}_3^2} \right)-\ds\frac{\hat{v}{}_3^2 \wh{E}_1^2}{\wh{H}{}_3^2}
\left((1-x)^2-2\ds\frac{\wh{E}_1^2}{\wh{\vH}{}_2^2}\right)
\right\}\\
\times \left\{ (1-x)^2\left((1-x)^2-2\wh{E}_1^2
\ds\frac{1+\hat{v}{}_3^2}{\wh{H}{}_3^2}\right)-\wh{E}_1^2 (3+
\hat{v}{}_3^2)\left((1-x)^2-2\ds\frac{\wh{E}_1^2}{\wh{H}{}_3^2}\right)
\right\}.
\label{px}
\end{multline}
We see that the polynomial $P(x)$ can be considered as a polynomial of $y=(1-x)^2$. Since the matrix $\mathfrak{A}_0+ \widehat{E}_1{\rm Q}_0$ is symmetric, all of its eigenvalues are real and, hence, all of the roots of the polynomial $Q(y):=P(x)$ should be positive. One can show that this is indeed so and, therefore, the requirement $x>0$ is equivalent to $y<1$. The polynomial $Q(y)$ is the multiplication of four polynomials (four expressions in braces in \eqref{px}). The requirement that each of these four polynomials has only roots less than unit is equivalent to the four inequalities in \eqref{posdef}. In the static case $\hat{v}_3=0$ this gives condition \eqref{v3=0}, and domains 3 in Fig. 1 describe this condition for different fixed values of $\widehat{H}_3$ in the plane of parameters $\widehat{E}_1$ and $\wh{\mathcal{H}}_2$.

\begin{remark}
{\rm
Under assumption \eqref{52a} the interface symbol is {\it elliptic}, i.e., we are able to resolve the boundary conditions (\eqref{101} together with the boundary constraints \eqref{104}) for the gradient $\nabla_{t,x}\varphi=(\partial_t\varphi,\partial_2\varphi,\partial_3\varphi)$, cf. \eqref{111}. This assumption is really crucial for deriving the a priori estimate. On the other hand, the interface symbol can be elliptic even if \eqref{52a} is violated. Indeed, substituting the first condition in \eqref{101} into the third and fourth ones and using the second condition in \eqref{104}, we obtain
\begin{equation}
\begin{array}{c}
\hat{\mu}\partial_2\varphi =-E_2|_{x_1=0}+\varepsilon \bigl(\wh{\vH}_3v_1-\hat{v}_3\vH_1+\varkappa\vH_3\bigr)|_{x_1=0},\\[3pt]
\hat{\mu}\partial_3\varphi =-E_3|_{x_1=0}+\varepsilon \bigl(-\wh{\vH}_2v_1+\hat{v}_2\vH_1-\varkappa\vH_2\bigr)|_{x_1=0},
\end{array}
\label{r3.1}
\end{equation}
and the substitution of \eqref{r3.1} into the first condition in \eqref{101} gives
\begin{equation}
\hat{\mu}\partial_t\varphi =\wh{E}_1v_1|_{x_1=0}+\hat{v}_2E_2|_{x_1=0}+\hat{v}_3E_3|_{x_1=0} +\varepsilon \varkappa \bigl(\hat{v}_3{\vH}_2-\hat{v}_2\vH_3\bigr)|_{x_1=0}.
\label{r3.2}
\end{equation}
That is, if $\hat{\mu}\neq 0$, then the interface symbol is elliptic.\footnote{If $\hat{\mu}= 0$, then we can get an a priori estimate for the case of constant coefficients even in $L_2$ (cf. \eqref{109}), but for the case of variable coefficients the assumption about ellipticity of the interface symbol is really necessary for obtaining an estimate (see \cite{T12}).} Note that without taking into account the second condition in \eqref{104} we can resolve the boundary conditions \eqref{101} for $\nabla_{t,x}\varphi$ under the more restrictive condition $\widehat{E}_1\hat{\mu}\neq 0$.

Substituting \eqref{r3.1} and \eqref{r3.2} into \eqref{110} for corresponding $\alpha=0,2,3$ and repeating the arguments towards getting equality \eqref{120} (passing to volume integrals, integrating by parts, etc.), we obtain a counterpart of \eqref{120}, where the matrix $\hat{\mu}{\rm Q}_0$ is replaced by some symmetric matrix $\widetilde{\rm Q}_0$ which can be explicitly calculated. Then the quadratic form $((\mathfrak{A}_0+ \hat{\mu}{\rm Q}_0)Z,Z)$ in \eqref{121} is replaced by the form $((\mathfrak{A}_0+ \widetilde{\rm Q}_0)Z,Z)$. Omitting simple calculations connected with the reduction of the boundary integrals $\int_{\mathbb{R}^2}E_k\partial_kE_1|_{x_1=0}{\rm d}x'$ for $k=2,3$ to volume integrals of tangential derivatives of $E$, we get that the quadratic form for the vector $(\partial_2E_1,\partial_3E_1,\partial_2E_2,\partial_3E_3)$ contained in $((\mathfrak{A}_0+ \widetilde{\rm Q}_0)Z,Z)$ reads
\[
-\left\{ (\partial_2E_1)^2+(\partial_3E_1)^2+(\partial_2E_2)^2+(\partial_3E_3)^2+4\partial_2E_2\partial_3E_3\right\}.
\]
Hence, the whole quadratic form $((\mathfrak{A}_0+ \widetilde{\rm Q}_0)Z,Z)$ cannot be positive definite. We have similar situation if for finding $\nabla_{t,x}\varphi$ we use \eqref{101} together with the first condition in \eqref{104} (but not the second one as above).

Thus, we are not able to derive an a priori estimate by using our energy method for the case when assumption \eqref{52a} is violated (or if instead of \eqref{52a} we just apply other ellipticity conditions for the interface symbol). Moreover, if $\widehat{H}_2\widehat{\mathcal{H}}_3-\widehat{H}_3\widehat{\mathcal{H}}_2=0$, then, at least for a particular case, in the next section we prove that the linearized problem is ill-posed for any $\widehat{E}_1\neq 0$.
}
\label{r3}
\end{remark}

\section{Normal modes analysis for particular cases}
\label{sec:5}

We seek exponential solutions to problem \eqref{99}--\eqref{101} with $F=0$ and special initial data:
\begin{align}
& U = \bar{U}\exp \left\{ \tau t +\xi_{\rm p}x_1+ i(\gamma ', x') \right\},\label{135}\\
& V = \bar{V}\exp \left\{ \tau t +\xi_{\rm v}x_1+ i(\gamma ', x') \right\},\label{136}\\
& \varphi = \bar{\varphi}\exp \left\{ \tau t + i(\gamma ', x') \right\},\label{137}
\end{align}
where $\bar{U}$ and $\bar{V}$ are complex-valued constant vectors, $\tau$, $\xi_{\rm p}$, $\xi_{\rm v}$ and $\bar{\varphi}$ are complex constants, and $\gamma '=(\gamma_2,\gamma_3)$ with real constants $\gamma_{2,3}$. The existence of exponentially growing solutions in
form \eqref{135}--\eqref{137}, with
\begin{equation}
\Re\,\tau>0,\quad \Re\,\xi_{\rm p}<0,\quad
\Re\,\xi_{\rm v}<0,\label{138}
\end{equation}
implies the ill-posedness of problem \eqref{99}--\eqref{101} because the consequence of solutions
\[
(U (nt,n{x} ), V( nt, nx), \varphi (nt,n{x}' ))\exp
(-\sqrt{n}\,),\quad n=1,2,3,\ldots\,,
\]
with $U$, $V$ and $\varphi$ defined in \eqref{135}--\eqref{137}, is the Hadamard-type ill-posedness example. Since the last equation in \eqref{99} for the entropy perturbation $S$ plays no role in the construction of an ill-posedness example, we will suppose that $U=(p,u,H)$.

Theoretically, we can construct a 1D ill-posedness example (with $\gamma' =0$) on a codimension-1 set in the parameter domain. But, this is not the case for problem \eqref{99}--\eqref{101}. Indeed, in 1D we have $E_1=0$ and the energy inequality \eqref{109} implies well-posedness. Thus, we may assume that $\gamma' \neq 0$ and even, without loss of generality, $|\gamma'|=1$.

\subsection{Analytical study}

We analytically construct a 2D ill-posedness example with $\gamma_3=0$ (i.e., $\gamma_2 =1$) for some particular cases of the unperturbed flow. Since the assumption $\gamma_3=0$ is a restriction, this will give us {\it sufficient} instability conditions.

We will consider the unperturbed flow with
\begin{equation}
\hat{v}_1=\hat{v}_2=\widehat{H}_2=0
\label{unfl}
\end{equation}
(recall that $\widehat{H}_1=0$, see \eqref{98}). Since $\hat{v}_1=\varkappa$, we have $\varkappa =0$. However, by continuity the below ill-posedness examples can be extended to the case $|\varkappa | \ll 1$. Substituting \eqref{135} and \eqref{136} into \eqref{99} (with $F=0$) and \eqref{100} respectively, we get the dispersion relations
\[
\det (\tau I  + \xip \wh{A}_1 + i \wh{A}_2)=0\qquad\mbox{and}\qquad \det (\ve\tau I + \xiv \wh{B}_1 + i B_2)=0,
\]
from which, omitting technical calculations, we find the unique roots
\begin{equation}
\xip = - \dsq{1+K\tau^2},\qquad \xiv=-\dsq{1+\ve^2\tau^2},
\label{139}
\end{equation}
with property \eqref{138}, where \(K=1/(1+\wh{H}{}_3^2)\).

Consider first the particular case $\widehat{\mathcal{H}}_2=0$ of the unperturbed flow \eqref{unfl}. For this case our basic assumption \eqref{52a} is violated. Substituting \eqref{135}--\eqref{137} into the boundary conditions \eqref{101}, we obtain
\[
    \tau \bar{\vfi} = \bar{v}_1, \quad\bar{q} = \wh{\vH}_3
    \bar{\vH}_3 - \wh{E}_1 \bar{E}_1, \quad \bar{E}_2 = \ve \tau
    \bar{\vfi} \wh{\vH}_3 - i \bar{\vfi} \wh{E}_1, \notag
\]
and excluding the constant \(\bar{\vfi}\) we come to the relations
\begin{equation}
\bar{p}+ \bar{H}_3 \wh{H}_3 +\bar{E}_1 \wh{E}_1 - \bar{\vH}_3
\wh{\vH}_3=0,\quad (i \wh{E}_1 - \tau \ve \wh{\vH}_3)\bar{v}_1 +
\tau \bar{E}_2 =0.\label{140}
\end{equation}
From \eqref{135} and \eqref{136} we have
\begin{equation}
\begin{array}{c}
\tau \bar{p} + \xip \bar{v}_1 + \ds\frac{1}{\tau} (\bar{\vH}_3
\wh{\vH}_3 - \bar{E}_1 \wh{E}_1) = 0,\\[6pt]\tau \bar{v}_1 + \xip
(\bar{p}+\bar{H}_3 \wh{H}_3)=0 , \quad \tau \ve \bar{E}_1 = i
\bar{\vH}_3,\quad \tau \ve \bar{E}_2 = \xiv
\bar{\vH}_3.\end{array}\label{141}
\end{equation}
Then, \eqref{140} and \eqref{141} imply
\begin{equation}
\begin{pmatrix}
\tau & \xip & -\ds\frac{1}{\tau}(\wh{E}_1 + i \tau \ve
\wh{\vH}_3)\\[3pt]
0& \tau & - \xip (\wh{E}_1 + i \tau \ve \wh{\vH}_3)\\[3pt]
0& i\wh{E}_1 - \tau \ve \wh{\vH}_3 & -\tau \xiv
\end{pmatrix}
\begin{pmatrix}
\bar{p} \\ \bar{v}_1 \\ \bar{E}_1
\end{pmatrix}
=0.\label{142}
\end{equation}

Since we are interesting in solutions with $(\bar{U},\bar{V})\neq 0$, the determinant of the matrix in \eqref{142} should be equal to zero. Taking into account \eqref{139}, this gives us the following final equation for $\tau$:
\begin{equation}
\tau^2 \dsq{1+ \ve^2 \tau^2} = (\wh{E}_1 + i \tau \ve
\wh{\vH}_3)^2 \dsq{1+ K
\tau^2}.\label{143}
\end{equation}
If $\widehat{E}_1=0$, then \eqref{143} has only roots with $\Re\,\tau =0$. That is, for the 2D problem (recall that $\gamma_3=0$) the Kreiss-Lopatinski is satisfied, but only in a weak sense because the Lopatinski determinant has imaginary roots. At the same time, since we assume that $\hat{v}_2=0$ (see \eqref{unfl}), for the case when $\widehat{\mathcal{H}}_2=\widehat{E}_1=0$ the constant $\hat{\mu}$ in \eqref{mu} is zero. Then \eqref{109} implies an $L_2$ a priori estimate for problem \eqref{99}--\eqref{102}, i.e., the Kreiss-Lopatinski is satisfied  (in 3D) at least in a weak sense.\footnote{Our hypothesis is that in the general case the Kreiss-Lopatinski can be satisfied {\it only} in a weak sense, i.e., the uniform Kreiss-Lopatinski is is always violated for problem \eqref{99}--\eqref{102}, but this is hard to be proved analytically.}

If $\widehat{E}_1\neq 0$, then, setting formally $\varepsilon =0$,  graphically it is easy to see that equation \eqref{143} has a root $\tau^2>0$, i.e., we have an ``unstable'' root $\tau >0$. Since $\varepsilon$ is a very small (fixed) constant, it is clear that there is an ``unstable'' root $\tau$ with $\Re\,\tau >0$  for $\varepsilon \neq 0$. Thus, we see that if assumption \eqref{52a} is violated, then the linearized problem can be ill-posed (see Remark \ref{r3}).

We now consider a particular case when assumption \eqref{52a} holds. Let $\widehat{\mathcal{H}}_3=0$ and $\widehat{H}_3\widehat{\mathcal{H}}_2\neq 0$. In view of \eqref{unfl}, this is nothing else than the particular case \eqref{pcase}. Substituting \eqref{135}--\eqref{137} into the boundary conditions \eqref{101}, after the exclusion of
constant $\bar{\varphi}$ we obtain
\begin{equation}
\bar{p}+ \bar{H}_3 \wh{H}_3 +\bar{E}_1 \wh{E}_1 - \bar{\vH}_2
\wh{\vH}_2=0,\quad \tau \bar{E}_2 + i \bar{v}_1 \wh{E}_1 = 0 ,\quad
\bar{E}_3+ \ve \bar{v}_1 \wh{\vH}_2 =0.\label{145}
\end{equation}
It follows from \eqref{99} and \eqref{100} that
\begin{equation}
\begin{array}{c}
\tau \bar{p} + \xip \bar{v}_1 + \ds\frac{1}{\tau} (\bar{\vH}_2
\wh{\vH}_2 - \bar{E}_1 \wh{E}_1) = 0,\quad \tau \bar{v}_1 + \xip
(\bar{p}+\bar{H}_3 \wh{H}_3)=0 ,\\[6pt]
\tau \ve \bar{\vH}_2 = - \xiv \bar{E}_3,\quad\tau \ve \bar{E}_2 =
\xiv \bar{\vH}_3,\quad\tau \ve \bar{E}_1 = i
\bar{\vH}_3.
\end{array}
\label{146}
\end{equation}
From \eqref{145} and \eqref{146} we get
\begin{equation}
\begin{pmatrix}
 \tau & (\xip + \ds\frac{\xiv \wh{\vH}{}_2^2}{\tau^2}) &
-\ds\frac{1}{\tau} \wh{E}_1\\[9pt]
 0& (\tau + \xip \ds\frac{\xiv \wh{\vH}{}_2^2}{\tau}) &
 -\xip \wh{E}_1\\
 0& \wh{E}_1 & - \tau \xiv
\end{pmatrix}
\begin{pmatrix}
 \bar{p}\\\bar{v}_1\\\bar{E}_1
\end{pmatrix}
=0.\label{146'}
\end{equation}

System \eqref{146'} has anonzero solution $(\bar{p},\,\bar{v}_1,\,\bar{E}_1)$ if
\begin{equation}
\tau^2 \dsq{1+ \ve^2 \tau^2} = (\wh{E}{}_1^2 - \wh{\vH}{}_2^2 (1+
\ve^2 \tau^2)) \dsq{1+ K
\tau^2}\label{147}
\end{equation}
or
\[
 G(y) = y \dsq{1+\ve^2 y} - (\wh{E}{}_1^2 - \wh{\vH}{}_2^2 (1+
\ve^2 y))\dsq{1+K y} =0,
\]
with $y=\tau^2$. If $\widehat{E}_1^2>\widehat{\mathcal{H}}_2^2$, then $G(0)<0$ and $G(y^*)>0$, where
\[
y^*=\frac{\wh{E}{}_1^2 -
\wh{\vH}{}_2^2}{\ve^2 \wh{\vH}{}_2^2} >0
\]
(recall that we consider the case $\widehat{\mathcal{H}}_2\neq 0$). Hence, if \eqref{anins} holds, then equation \eqref{147} has an ``unstable'' root $\tau >0$, i.e., problem \eqref{99}--\eqref{102} is ill-posed. If \eqref{anins} is violated, taking into account that $\varepsilon$ is small enough, it is easy to see that \eqref{147} has only roots with $\Re\,\tau =0$. That is, again the Kreiss-Lopatinski can be satisfied only in a weak sense (at least, in 2D).

\subsection{Numerical investigation}

It should be noted that condition \eqref{anins} is only sufficient for instability for the particular case \eqref{pcase} because  normal modes analysis was performed under the restriction $\gamma_3=0$ on the wave vector. For finding a necessary and sufficient instability condition we have to repeat our above analysis for an arbitrary wave vector $\gamma'$ (with $|\gamma'|=1$). Since it is technically impossible to do this analytically, we do numerical calculations by using MATLAB$^\circledR$ software.

Just for technical simplicity we consider the static case $\hat{v}_3=0$. Repeating the arguments above, we obtain the following counterparts of \eqref{139} and the Lopatinski determinant \eqref{147} for the general case of the wave vector \(\gamma = (\gamma_2 ,\,\, \gamma_3) = ( \cos\psi ,
\sin\psi ) \):
\[
\xip = - \dsq{1 + \ds\frac{\tau^4}{(1+ \wh{H}{}_3^2)\tau^2 +
\sin^2\psi\, \wh{H}{}_3^2}},\qquad \xiv = -\dsq{1+\ve^2\tau^2},
\]
\begin{equation}
(\tau^2 + \wh{H}{}_3^2\sin^2\psi ) \xiv =  \left(\wh{E}{}_1^2 -
\wh{\vH}{}_2^2 (\ve^2\tau^2  +  \cos^2\psi) - 2 i \ve \tau
 \wh{E}_1 \wh{\vH}_2\sin\psi\right)\xip .\label{ld}
\end{equation}

We fix $\ve =10^{-6}$ and four different values of $\wh{H}{}_3$: $\wh{H}{}_3=1$, $\wh{H}{}_3=2/3$, $\wh{H}{}_3=0.5$ and $\wh{H}{}_3=0.25$. Then, we choose the partition of the interval $(0, 2\pi )$ with the step $10^{-2}$ for the angle $\psi$ and for all its points numerically solve equation \eqref{ld} for $\tau$. The results of these calculations in the plane of parameters $\widehat{E}_1$ and $\wh{\mathcal{H}}_2$ are presented in Fig. 1, where the union of domain 1 and 4 is the whole instability domain for case \eqref{pcase} with $\hat{v}_3=0$. Recall that domain 1 is the domain described by the sufficient instability condition \eqref{anins} found analytically.

\section{Well-posedness of the linearized problem}
\label{sec:6}

For the case of variable coefficients the counterpart of the secondary symmetrization \eqref{105} for the linearized ``vacuum'' system reads
\begin{equation}
\varepsilon\mathcal{B}_0\partial_t{V}+\widehat{\mathcal{B}}_1\partial_1{V}+\mathcal{B}_2\partial_2{V}+
\mathcal{B}_3\partial_3{V}=0 \qquad \mbox{in}\ \Omega_T,\label{105v}
\end{equation}
where
\[
\widehat{\mathcal{B}}_1= \frac{1}{\partial_1\widehat{\Phi}^-}\left(\mathcal{B}_1-\varepsilon I\,\partial_t\widehat{\Psi}^- -\mathcal{B}_2\partial_2\widehat{\Psi}^-
-\mathcal{B}_3\partial_3\widehat{\Psi}^- \right).
\]
We can prove the equivalence of systems \eqref{90} and \eqref{105v}.

\begin{lemma}
Assume that the functions $\nu_i(t,x)$ satisfy the hyperbolicity condition \eqref{21'} and systems \eqref{90} and \eqref{105v} have common initial data satisfying constraints \eqref{94} for $t=0$. Assume also that the corresponding Cauchy problems for \eqref{90} and \eqref{105v} have a unique classical solution on a time interval $[0,T]$. Then these solutions coincide on  $[0,T]$.
\label{l1}
\end{lemma}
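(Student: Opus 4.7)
The plan is to show that systems \eqref{90} and \eqref{105v} are algebraically equivalent modulo the divergence constraints \eqref{94}, so that any classical solution of \eqref{90} satisfying those constraints is automatically a classical solution of \eqref{105v}, and vice versa. The lemma then follows at once from propagation of constraints combined with the assumed uniqueness of the Cauchy problem for \eqref{105v}.

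The key step is to establish the pointwise algebraic identity
\begin{equation*}
\varepsilon\mathcal{B}_0\partial_tV+\widehat{\mathcal{B}}_1\partial_1V+\mathcal{B}_2\partial_2V+\mathcal{B}_3\partial_3V
= M(\nu)\bigl(\varepsilon\partial_tV+\widehat{B}_1\partial_1V+B_2\partial_2V+B_3\partial_3V\bigr) + N(\nu)\begin{pmatrix}\div\mathfrak{h} \\ \div\mathfrak{e}\end{pmatrix},
\end{equation*}
valid for every smooth $V$, where $M(\nu)$ is a $6\times 6$ matrix reducing to the identity at $\nu=0$ and $N(\nu)$ is a $6\times 2$ matrix depending linearly on $\nu$. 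The verification is row by row. For instance, in the flat case ($\widehat{\Psi}^-\equiv 0$) the first row of the symmetrized operator applied to $V$ minus the first row of the unsymmetrized operator equals $\nu_1\div\mathcal{H}$ plus $-\nu_2$ times row $6$ of the unsymmetrized operator plus $\nu_3$ times row $5$; the remaining rows follow the same pattern by the antisymmetric/symmetric design of $\mathcal{B}_0,\mathcal{B}_1,\mathcal{B}_2,\mathcal{B}_3$. The change of variables only rescales the spatial derivatives by $1/\partial_1\widehat{\Phi}^-$, which is absorbed simultaneously in the definitions of $\widehat{B}_1,\widehat{\mathcal{B}}_1$ and in the weighted divergences $\div\mathfrak{h},\div\mathfrak{e}$, so the identity passes to the variable coefficient case intact.

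With the identity at hand, let $V$ be the unique classical solution of the Cauchy problem for \eqref{90} on $[0,T]$ with the prescribed initial data. The linearized constraint propagation established in Section \ref{sec:3} along the lines of \cite{T12} (the linearized counterpart of Proposition \ref{p2}) yields $\div\mathfrak{h}=\div\mathfrak{e}=0$ throughout $\Omega_T$, since by hypothesis these hold at $t=0$. Plugging this into the algebraic identity forces the $N(\nu)$ term to vanish, so $V$ is also a classical solution of \eqref{105v} with the same initial data. Since $\mathcal{B}_0>0$ by the hyperbolicity condition \eqref{21'}, system \eqref{105v} is symmetric hyperbolic and its Cauchy problem is uniquely solvable by assumption; hence $V$ coincides with the unique solution of \eqref{105v} on $[0,T]$, which proves the lemma.

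The principal obstacle is the bookkeeping in the variable coefficient form of the algebraic identity: one must verify that the factors of $\partial_1\widehat{\Phi}^-$ and $\partial_j\widehat{\Psi}^-$ appearing in $\widehat{\mathcal{B}}_1-M(\nu)\widehat{B}_1$ recombine to precisely $N(\nu)$ applied to the weighted divergences $\div\mathfrak{h},\div\mathfrak{e}$, and not to some more complicated residual involving additional derivatives of the basic state. Once this computation is carried out, the remainder of the argument is the standard combination of constraint propagation and uniqueness for symmetric hyperbolic Cauchy problems.
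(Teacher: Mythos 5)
Your proof is correct and is essentially the argument the paper intends: the paper gives no proof of Lemma \ref{l1} beyond a citation of \cite{ST1}, and the argument there (as in \cite{T12}) is precisely your three steps --- the pointwise identity expressing the symmetrized operator as $\mathcal{B}_0$ applied to the original operator plus $N(\nu)\,({\rm div}\,\mathfrak{h},{\rm div}\,\mathfrak{e})^T$ (your $M(\nu)$ is exactly $\mathcal{B}_0$, and $N(\nu)={\rm diag}(\nu,\nu)$ up to the Jacobian factor $1/\partial_1\widehat{\Phi}^-$), propagation of the constraints \eqref{94} for solutions of \eqref{90} under \eqref{expan.lin}, and the assumed uniqueness for the symmetric hyperbolic system \eqref{105v}. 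On the bookkeeping you rightly single out as the only delicate point: the identity survives the change of variables only if the term $\varepsilon I\,\partial_t\widehat{\Psi}^-$ in the paper's definition of $\widehat{\mathcal{B}}_1$ is read as $\varepsilon\mathcal{B}_0\,\partial_t\widehat{\Psi}^-$ (consistently with the constant-coefficient formula $\widehat{\mathcal{B}}_1=\varepsilon\varkappa\mathcal{B}_0-\mathcal{B}_1$ of Section \ref{sec:4}); taken literally, the printed formula leaves a residual $\varepsilon\,\frac{\partial_t\widehat{\Psi}^-}{\partial_1\widehat{\Phi}^-}\,(I-\mathcal{B}_0)\,\partial_1V$ that is not a combination of the equations and the constraints, so your identity is the correct one.
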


Lemma \ref{l1} can be proved with minor modifications of the proof of corresponding lemma from \cite{ST1}, where a hyperbolic $\varepsilon$-regularization was used for the elliptic system of pre-Maxwell dynamics. We just refer the reader to \cite{ST1} in this connection.

For variable coefficients the counterpart of \eqref{111} reads
\begin{equation}
\nabla_{t,x}\varphi=\hat{a}_1{H}_{N|x_1=0}+\hat{a}_2 {\mathcal{H}}_{N|x_1=0} +\hat{a}_3 {v}_{N|x_1=0}+\hat{a}_0\varphi ,
\label{69v}
\end{equation}
where the vector-functions $\hat{a}_{\alpha}={a}_{\alpha}(\widehat{W}_{|x_1=0})=({a}_{\alpha}^0,{a}_{\alpha}^1,{a}_{\alpha}^2)$ can be easily written down, in particular, $\hat{a}_{3}=(1,0,0)$,
\[
{a}_1^1=\frac{\widehat{\mathcal{H}}_3|_{x_1=0}}{(\widehat{H}_2\widehat{\mathcal{H}}_3-
\widehat{H}_3\widehat{\mathcal{H}}_2)|_{x_1=0}},\quad
{a}_0^1=\frac{(\widehat{\mathcal{H}}_3\partial_1\widehat{H}_N -\widehat{H}_3\partial_1\widehat{\mathcal{H}}_N)|_{x_1=0}}{(\widehat{H}_2\widehat{\mathcal{H}}_3-
\widehat{H}_3\widehat{\mathcal{H}}_2)|_{x_1=0}},\quad \mbox{etc.}
\]
The boundary condition \eqref{91b} together with the result of the substitution of \eqref{69v} into \eqref{91c}--\eqref{91d} is written in the form
\begin{equation}
M
\left(\begin{array}{c}
U\\
V
 \end{array} \right)+b\,\varphi =0\qquad \mbox{on}\ \partial\Omega_T,
\label{rbc}
\end{equation}
where the matrix $M$ and the vector $b$ can be explicitly defined, in particular, the first equation in system \eqref{rbc} is nothing else than \eqref{91b}.

\begin{lemma}
Problem \eqref{89}--\eqref{92} is equivalent to problem \eqref{89}, \eqref{90}, \eqref{rbc}, \eqref{91a}, \eqref{92}.
\label{l2}
\end{lemma}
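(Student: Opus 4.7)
The plan is to establish the two implications separately, using the derived boundary constraints \eqref{95} and \eqref{96} as the bridge between the two formulations of the boundary conditions. The key observation is that \eqref{95} and \eqref{96} can be obtained from the interior equations \eqref{89}, \eqref{90}, the kinematic boundary condition \eqref{91a}, the basic-state identities \eqref{41}--\eqref{44}, and zero initial data \eqref{92} alone, \emph{without} invoking \eqref{91c}--\eqref{91d}.

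For the forward implication, suppose $(U,V,\varphi)$ solves \eqref{89}--\eqref{92}. First I would invoke the linearized analogues of Propositions \ref{p1}--\ref{p2} to deduce \eqref{93}--\eqref{96}. Under the non-collinearity hypothesis \eqref{52}, the three scalar equations \eqref{91a}, \eqref{95}, \eqref{96} form a linear algebraic system for $(\partial_t\varphi,\partial_2\varphi,\partial_3\varphi)$ whose coefficient determinant is proportional to $(\widehat{H}_2\widehat{\mathcal{H}}_3-\widehat{H}_3\widehat{\mathcal{H}}_2)|_{x_1=0}\neq 0$, and inverting it yields the identity \eqref{69v}. Substituting \eqref{69v} into the boundary conditions \eqref{91c}--\eqref{91d} replaces every $\nabla_{t,x}\varphi$ term by a linear combination of $(U,V)|_{x_1=0}$ and $\varphi$; together with \eqref{91b}, which by construction is the first row of \eqref{rbc}, this yields precisely the matrix boundary relation \eqref{rbc}. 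The remaining condition \eqref{91a} is kept as a separate equation, so $(U,V,\varphi)$ solves the reformulated problem.

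For the converse, suppose $(U,V,\varphi)$ solves the reformulated problem \eqref{89}, \eqref{90}, \eqref{rbc}, \eqref{91a}, \eqref{92}. By design the first row of \eqref{rbc} is \eqref{91b}. To recover \eqref{91c}--\eqref{91d} it suffices to establish \eqref{95} and \eqref{96}, for once they hold the linear system \eqref{91a}, \eqref{95}, \eqref{96} again produces \eqref{69v} and the algebraic substitution in the forward direction can be inverted. For \eqref{95} I would repeat the derivation given for current-vortex sheets in \cite{T09}: from the MHD system \eqref{89} and the basic-state identity \eqref{41} one obtains a transport equation on $\partial\Omega_T$ for the quantity $H_N-\widehat{H}_2\partial_2\varphi-\widehat{H}_3\partial_3\varphi+\varphi\,\partial_1\widehat{H}_N$, using \eqref{91a} and \eqref{43} to identify its coefficients; zero initial data \eqref{92} then forces it to vanish. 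For \eqref{96} I would follow the corresponding argument in \cite{T12}, invoking the Maxwell system \eqref{90}, the identity \eqref{42}, the second part of \eqref{43}, the noncharacteristic assumption \eqref{expan.lin}, and zero initial data \eqref{92}; the tangential traces of $E$ required in that derivation are supplied by the second, third and fourth rows of \eqref{rbc}.

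The main obstacle will be the last step: verifying that the substituted tangential-$E$ information contained in \eqref{rbc} is exactly what the derivation of \eqref{96} in \cite{T12} requires, which is not automatic because \eqref{rbc} differs from \eqref{91c}--\eqref{91d} by the added terms $\hat{a}_\alpha\cdot(H_N,\mathcal{H}_N,v_N)|_{x_1=0}$ produced by the substitution \eqref{69v}. However these correction terms are linear in $(H_N,\mathcal{H}_N)|_{x_1=0}$ and $\varphi$, so the transport equation for $\mathcal{H}_N|_{x_1=0}$ becomes a linear homogeneous ODE (in $t$, along tangential characteristics) with zero initial data, which still forces $\mathcal{H}_N|_{x_1=0}\equiv 0$ by Gronwall's inequality. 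Once this bookkeeping verification is done, \eqref{95} and \eqref{96} are valid in the reformulated setting, \eqref{69v} is recovered, and the substitution is inverted to produce \eqref{91c}--\eqref{91d}, completing the proof of equivalence.
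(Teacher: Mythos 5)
Your proposal follows essentially the same route as the paper: the forward direction is the construction of \eqref{rbc} via \eqref{69v}, and the converse first derives \eqref{95} from \eqref{91a} and the linearized $H$-equation, then obtains \eqref{96} from the Maxwell system at the boundary together with the tangential-$E$ rows of \eqref{rbc}, and finally inverts the substitution to recover \eqref{91c}--\eqref{91d}. Note only that your opening claim that \eqref{96} is available \emph{without} any tangential-$E$ boundary information contradicts your own (correct) later use of the second and third rows of \eqref{rbc} (there is no fourth row), and that the transported quantity is the linearized combination appearing in \eqref{96} rather than $\mathcal{H}_N|_{x_1=0}$ itself.
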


\begin{proof}
Clearly, smooth enough solutions to problem \eqref{89}--\eqref{92} (if they exist) satisfy problem \eqref{89}, \eqref{90}, \eqref{rbc}, \eqref{91a}, \eqref{92}. We just should prove the opposite. First of all, using \eqref{91a} and the equation for $H$ contained in \eqref{90} (see \eqref{41} with dropped hats), we obtain constraint \eqref{95}. After that we substitute $v_N|_{x_1=0}$ and $H_N|_{x_1=0}$ expressed from \eqref{91a} and \eqref{95} into the second and third boundary  conditions in \eqref{rbc}. Using the result of  this substitution and system \eqref{90} at $x_1=0$ we can derive the second constraint \eqref{96} (we omit calculations). Then \eqref{rbc}, \eqref{95}, \eqref{96} and \eqref{91a} imply the boundary conditions \eqref{91}.
\end{proof}

It is worth noting that, in view of Lemma \ref{l1}, system \eqref{90} in problem \eqref{89}, \eqref{90}, \eqref{rbc}, \eqref{91a}, \eqref{92} can be equivalently replaced by \eqref{105v}. That is, from now on we can concentrate on the proof of the well-posedness of the following problem:
\begin{subequations}
\label{np}
\begin{align}
& \widehat{A}_0\partial_t{U}+\sum_{j=1}^{3}\widehat{A}_j\partial_j{U}+
\widehat{\mathcal{C}}{U}=F \qquad \mbox{in}\ \Omega_T,  \label{89"}
\\
& \varepsilon\mathcal{B}_0\partial_t{V}+\widehat{\mathcal{B}}_1\partial_1{V}+\mathcal{B}_2\partial_2{V}+
\mathcal{B}_3\partial_3{V}=0 \qquad \mbox{in}\ \Omega_T, \label{90"}
\\
& M
\left(\begin{array}{c}
U\\
V
 \end{array} \right)+b\,\varphi =0\qquad  \mbox{on}\ \partial\Omega_T, \label{rbc"}
\\
& ({U},{V})=0\qquad \mbox{for}\ t<0,  \label{92"}
\\
& \partial_t\varphi={v}_{N}-\hat{v}_2\partial_2\varphi-\hat{v}_3\partial_3\varphi +
\varphi\,\partial_1\hat{v}_{N}\qquad  \mbox{on}\ \partial\Omega_T, \label{91a"} \\
&\varphi =0\qquad \mbox{for}\ t<0.\label{92"a}
\end{align}
\end{subequations}

\begin{lemma}
Let the basic state satisfies the assumptions of Theorem \ref{t1}. Then the a priori estimate \eqref{54} holds for problem \eqref{np}.
\label{l3}
\end{lemma}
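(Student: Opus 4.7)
The plan is to mimic the constant-coefficient derivation from Section \ref{sec:4}, inserting the modifications needed for the variable-coefficient basic state, exactly in the spirit of \cite{T12}. First I would apply the standard symmetric-hyperbolic energy identity to \eqref{89"} and to \eqref{90"} (the latter multiplied by $\varepsilon^{-1}$), choosing $\nu = \varepsilon \hat v$ in the secondary symmetrization so that $\mathcal{B}_0$ is uniformly positive definite by \eqref{21'}. Integration by parts produces a volume term controlled by $\widehat{\mathcal{C}}$, $\nabla_{t,x}\widehat{\Psi}^{\pm}$ and $F$ (harmless by \eqref{38}), together with a boundary quadratic form $\mathcal{Q}$ on $\{x_1=0\}$ whose computation reproduces the expression written for constant coefficients, but modulo terms carrying factors of $\partial_2\hat\varphi,\partial_3\hat\varphi$ and $\partial_1\widehat U|_{x_1=0}$. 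After substituting the boundary conditions from \eqref{rbc"} together with \eqref{91a"} and using the boundary relation \eqref{96}, the form $\mathcal Q$ becomes a total $t,x'$-derivative (the $\mathcal Q_0$-part vanishes by the divergence constraint \eqref{94} imposed on the boundary via \eqref{90"}), plus lower-order contributions bounded by $\widehat{E}_1\, \varphi\, \partial_\alpha E_1|_{x_1=0}$ and similar.

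Next I would differentiate \eqref{89"} and \eqref{90"} in the tangential directions $\partial_\alpha$ with $\alpha \in\{0,2,3\}$ and repeat the energy identity, obtaining the variable-coefficient analogue of \eqref{110}. The essential point is that the resulting boundary integrals $\int \hat\mu\, \partial_\alpha\varphi\,\partial_\alpha E_1|_{x_1=0}$ are treated via the ellipticity relation \eqref{69v}, which plays the role of \eqref{111}: it lets me replace $\nabla_{t,x}\varphi$ at the boundary by $\widehat{a}_1 H_N + \widehat{a}_2\mathcal{H}_N+\widehat{a}_3 v_N+\widehat{a}_0\varphi$, at the price of new commutator/lower-order terms coming from differentiating the coefficients $\widehat{a}_\alpha$. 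All boundary integrals of the type $H_1\partial_k E_1|_{x_1=0}$ and $H_1\partial_t E_1|_{x_1=0}$ are converted to volume integrals by integration by parts, then normal derivatives of $v_1,H_1,V$ are eliminated exactly as in \eqref{118}, \eqref{119} and \eqref{117}, using that $(v_1,H_1)$ are the two noncharacteristic plasma components (cf.\ \eqref{bmq}--\eqref{35'}) and that the variable-coefficient vacuum system remains noncharacteristic by \eqref{expan.lin}. The outcome is a variable-coefficient analogue of \eqref{121}: the leading quadratic form is $(\mathfrak{A}_0+\widehat{E}_1\mathrm{Q}_0)Z\cdot Z$ plus a perturbation of size $\mathcal O(K\|\widehat W\|_{W^1_\infty})$ in the coefficients; by \eqref{38} this perturbation is bounded uniformly by $K$.

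The sufficient stability condition of Theorem \ref{t1}, namely \eqref{52} together with $|\widehat{E}_1|<\widehat{E}_1^*$ on $\partial\Omega_T$, then guarantees pointwise positive definiteness of $\mathfrak{A}_0+\hat\mu\, \mathrm{Q}_0$ with a uniform lower bound; absorbing the perturbation from variable coefficients (possibly after shrinking $\widehat{E}_1^*$ in a way depending only on $K$ and $\varepsilon_2$) and using Young's inequality as in \eqref{124}, I obtain the tangential analogue of \eqref{125}. A standard use of the equations then inserts the $L_2$-norm of $(U,V)$ and recovers the $H^1_*(\Omega_T)$ norm of $U$ together with the $H^1(\Omega_T)$ norm of $V$. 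To upgrade the $H^1$-bound of $\varphi$ on the boundary to an $H^{3/2}$-bound, I use \eqref{69v} once more: since the $H^{1/2}(\partial\Omega_T)$-norms of the noncharacteristic traces $v_1,H_1,\mathcal{H}_1$ are controlled by the already obtained $H^1_*$- and $H^1$-bounds on $(U,V)$, one gains the extra half-derivative on $\nabla_{t,x}\varphi$. A final application of Gronwall's lemma to the resulting inequality yields \eqref{54}.

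The main obstacle, as in \cite{T12}, is controlling the perturbation terms created by variable coefficients in the large quadratic form $((\mathfrak{A}_0+\hat\mu\mathrm{Q}_0)Z,Z)$ without destroying its positive definiteness: one has to verify that the zeroth-order commutator terms generated by differentiating $\widehat{\mathcal{C}}$, $\widehat{A}_j$, $\widehat{\mathcal{B}}_j$ and $\widehat{a}_\alpha$ really are absorbable by $\delta\|Z(t)\|^2_{L_2}$ plus $\|Z\|^2_{L_2(\Omega_t)}$, and that the elliptic inversion \eqref{69v} does not degenerate, i.e.\ that the factor $(\widehat H_2\widehat{\mathcal H}_3-\widehat H_3\widehat{\mathcal H}_2)|_{x_1=0}$ stays bounded away from zero, which is exactly the content of \eqref{52}. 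All remaining steps are routine analogues of \eqref{107}--\eqref{134}.
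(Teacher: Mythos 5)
Your proposal is correct and follows essentially the same route as the paper: the paper's own proof of Lemma \ref{l3} simply asserts that the constant-coefficient energy argument of Section \ref{sec:4} extends to variable coefficients as in \cite{T12} and then invokes Lemmata \ref{l1} and \ref{l2}, and your plan is precisely the detailed content of that extension (secondary symmetrization with $\nu=\varepsilon\hat v$, tangential differentiation, elliptic resolution \eqref{69v} of $\nabla_{t,x}\varphi$, passage to volume integrals, positive definiteness of $\mathfrak{A}_0+\hat\mu\,{\rm Q}_0$, and Gronwall). The only point worth flagging is that the constraints \eqref{94}--\eqref{96} you use along the way must first be re-derived for problem \eqref{np} from the resolved boundary conditions, which is exactly what Lemma \ref{l2} supplies.
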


\begin{proof}
The arguments towards the extension of the a priori estimate \eqref{54} derived in Section \ref{sec:4} for the constant coefficients problem \eqref{99}--\eqref{102} to the case of variable coefficients are similar to those in the relativistic case in \cite{T12}. Then smooth solutions to problem \eqref{89}--\eqref{92} (if they exist) obey estimate \eqref{54}. By virtue of Lemmata \ref{l1} and \ref{l2}, we come to the conclusion of Lemma \ref{l3}.
\end{proof}

To prove the existence of smooth solutions to problem \eqref{np} we can use the idea of \cite{ST1} applied there to  a hyperbolic $\varepsilon$-regularization of the linearized hyperbolic-elliptic plasma-vacuum problem. Namely, assuming that problem \eqref{89"}--\eqref{92"} has a unique smooth solution $(U,V)$ for any given smooth enough function $\varphi$ vanishing in the past, we prove the existence of the solution to \eqref{np} by a fixed point argument. After that we should solve problem \eqref{89"}--\eqref{92"} under the assumption that $\varphi$ is given.

\begin{lemma}
Let for all given ${F} \in H^1_*(\Omega_T)$ and
$\varphi \in H^{3/2}(\partial\Omega_T)$ vanishing in the past  problem \eqref{89"}--\eqref{92"} has a unique solution $(U,V)\in H^1_*(\Omega_T)\times H^1(\Omega_T)$, with $(q,v_N,H_N,V)|_{x_1=0}\in H^{1/2}(\partial\Omega_T)$, such that
\begin{equation}
[{U}]_{1,*,T}+\|{V}\|_{H^{1}(\Omega_T)}+\|(q,v_N,H_N,V)|_{x_1=0}\|_{H^{1/2}(\partial\Omega_T)}\leq C\left\{[F]_{1,*,T} +\|\varphi\|_{H^{3/2}(\partial\Omega_T)}\right\}.
\label{54"}
\end{equation}
Then problem \eqref{np} has a unique solution $(U,V,\varphi)\in H^1_*(\Omega_T)\times H^1(\Omega_T)\times H^{3/2}(\partial\Omega_T)$.
\label{l4}
\end{lemma}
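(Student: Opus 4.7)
The plan is a Banach fixed-point argument that closes the coupled problem \eqref{np} using the solvability hypothesis for the reduced problem \eqref{89"}--\eqref{92"}. Introduce the complete metric space
\[
\mathcal{X}_T := \{\varphi \in H^{3/2}(\partial\Omega_T) : \varphi = 0 \text{ for } t<0\}
\]
and define the map $\mathcal{T} : \mathcal{X}_T \to \mathcal{X}_T$ as follows. Given $\varphi \in \mathcal{X}_T$, first apply the hypothesis to produce the unique solution $(U,V) \in H^1_*(\Omega_T) \times H^1(\Omega_T)$ of \eqref{89"}--\eqref{92"} with trace $v_N|_{x_1=0}\in H^{1/2}(\partial\Omega_T)$ obeying \eqref{54"}; then set $\mathcal{T}(\varphi) := \tilde\varphi$, where $\tilde\varphi$ is the unique solution of the linear transport Cauchy problem
\[
\partial_t\tilde\varphi + \hat v_2 \partial_2\tilde\varphi + \hat v_3\partial_3\tilde\varphi - (\partial_1\hat v_N)\tilde\varphi = v_N|_{x_1=0} \quad \text{on}\ \partial\Omega_T,\qquad \tilde\varphi|_{t<0}=0,
\]
which exists and is unique by the classical theory of first-order linear hyperbolic equations with smooth coefficients on $\partial\Omega_T$. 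By construction, a fixed point of $\mathcal{T}$ is exactly a solution $(U,V,\varphi)$ of \eqref{np}.

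To conclude via Banach's theorem, I would show that $\mathcal{T}$ is a strict contraction on $\mathcal{X}_T$ with respect to an exponentially-weighted norm $\|e^{-\gamma t}\cdot\|_{H^{3/2}(\partial\Omega_T)}$ for $\gamma$ large, which on the finite interval $(-\infty,T]$ is equivalent to the ambient one. By linearity of the hypothesis and of the transport equation, the difference $\mathcal{T}(\varphi_1) - \mathcal{T}(\varphi_2)$ equals the output of the procedure applied to $\varphi_1-\varphi_2$ with $F=0$, so contraction reduces to an estimate of the form
\[
\|\mathcal{T}(\varphi)\|_{H^{3/2}(\partial\Omega_T)} \leq C\bigl\{[F]_{1,*,T} + \eta(\gamma)\,\|\varphi\|_{H^{3/2}(\partial\Omega_T)}\bigr\},
\]
with $\eta(\gamma) \to 0$ as $\gamma \to \infty$ (the parameter $\gamma$ absorbing lower-order terms on the right-hand side). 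Banach's theorem then produces a unique fixed point $\varphi\in\mathcal{X}_T$, yielding the existence claim, while uniqueness of the full solution follows from the a priori estimate in Lemma \ref{l3}.

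The main technical obstacle is justifying the required $H^{3/2}$ control on $\tilde\varphi$ from an $H^{1/2}$ source. Standard energy estimates for the first-order transport operator only yield $\|\tilde\varphi\|_{H^{1/2}} \leq C\|v_N|_{x_1=0}\|_{H^{1/2}}$, with no gain of regularity. To recover the missing half-derivative, one must exploit the fact that $v_N$ is a noncharacteristic component of the plasma system \eqref{89"}, so that its normal derivative at $x_1=0$ can be traded for tangential derivatives through the momentum equation itself (cf.\ the first row of \eqref{118}), producing extra control on tangential derivatives of $v_N|_{x_1=0}$ in terms of $[F]_{1,*,T}$ and $\|\varphi\|_{H^{3/2}}$. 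Combining this with tangential differentiation of the transport equation for $\tilde\varphi$ and absorption of commutator terms should close the $H^{3/2}$ estimate. A cleaner alternative is to carry out the fixed point in a weaker class such as $L^2$ or $H^{1/2}$, where the regularity bookkeeping is straightforward via standard transport theory, and then upgrade the regularity of the fixed point a posteriori using Lemma \ref{l3}, at the price of an approximation of the data and a limiting procedure. In either route the delicate point is the regularity gain, not the fixed-point scheme itself.
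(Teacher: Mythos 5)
Your fixed-point scheme is the same as the paper's: given $\overline\varphi$, solve the reduced problem \eqref{89"}--\eqref{92"} with $\overline\varphi$ in the boundary condition, feed the trace $v_N|_{x_1=0}$ into the transport equation \eqref{91a"} to produce an updated front, and close by the contraction mapping principle (the paper simply asserts the Lipschitz bound, referring to \cite{ST1}; your exponentially weighted norm is a legitimate way to make the contraction explicit). The genuine gap is in the step you yourself single out as the main obstacle, and both of your proposed resolutions fail. Trading $\partial_1v_1$ for tangential derivatives via the first row of \eqref{118} is precisely what yields the $H^{1/2}(\partial\Omega_T)$ control of the trace $v_N|_{x_1=0}$ already recorded in hypothesis \eqref{54"}; it cannot produce control of \emph{tangential} derivatives of $v_N|_{x_1=0}$ beyond $H^{1/2}$ --- that would require something like $U\in H^2_*$, which is not available. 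Consequently, tangential differentiation of the transport equation cannot close an $H^{3/2}$ bound for $\tilde\varphi$. The ``cleaner alternative'' of running the fixed point in $L^2$ or $H^{1/2}$ and upgrading afterwards is circular: the hypothesis of the lemma provides a solution operator only for fronts $\varphi\in H^{3/2}(\partial\Omega_T)$, so the map is not even defined on the weaker class, and Lemma \ref{l3} is an a priori estimate for solutions already known to have the stated regularity, not a bootstrapping device.

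The half-derivative gain comes from a different mechanism, which is exactly why hypothesis \eqref{54"} demands $H^{1/2}$ control of the traces of $H_N$ and $V$ (hence of $\mathcal{H}_N$) and not of $v_N$ alone. Under the non-collinearity condition \eqref{52} the interface symbol is elliptic: the transport equation \eqref{91a"} together with the boundary constraints \eqref{95}--\eqref{96} can be solved \emph{algebraically} for the full space-time gradient of the front,
\[
\nabla_{t,x}\varphi=\hat{a}_1H_{N|x_1=0}+\hat{a}_2\mathcal{H}_{N|x_1=0}+\hat{a}_3v_{N|x_1=0}+\hat{a}_0\varphi,
\]
which is formula \eqref{69v}. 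Every term on the right-hand side lies in $H^{1/2}(\partial\Omega_T)$ by \eqref{54"}, whence $\varphi\in H^{3/2}(\partial\Omega_T)$ together with the bound needed to set up the contraction. Without this ellipticity no amount of transport-equation theory recovers the extra derivative, so this step is not a removable technicality: it is the point at which the structural stability assumption \eqref{52} enters the existence proof. If you repair your argument by replacing the momentum-equation device with \eqref{69v}, the rest of your proposal goes through and coincides with the paper's proof.
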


\begin{proof}
Let $\overline\varphi\in H^{3/2}(\partial\Omega_T)$ vanishes in the past. We consider problem \eqref{np} with $\overline\varphi$ instead of $\varphi$ in \eqref{rbc"}. According to our assumption, the exist a unique solution $(U,V)\in H^1_*(\Omega_T)\times H^1(\Omega_T)$, with $(q,v_N,H_N,V)|_{x_1=0}\in H^{1/2}(\partial\Omega_T)$ of \eqref{89"}--\eqref{92"} (with $\overline\varphi$ instead of $\varphi$) enjoying the a priori estimate \eqref{54"} with $\overline\varphi$ instead of $\varphi$. Taking into account the boundary condition \eqref{91a"} and following arguments in \cite{ST1}, we can prove the estimate
\begin{equation}
\|\varphi\|_{H^{3/2}(\partial\Omega_T)}\leq C\left\{[F]_{1,*,T} +\|\overline\varphi\|_{H^{3/2}(\partial\Omega_T)}\right\}.
\label{54"b}
\end{equation}
This defines a map $\overline\varphi\to\varphi$ in $ H^{3/2}(\partial\Omega_T)$. Let $\overline\varphi^1, \overline\varphi^2\in H^{3/2}(\partial\Omega_T)$, and $({U}^1,V^1), ({U}^2,V^2)$, $\varphi^1, \varphi^2 $ be the corresponding solutions of problem \eqref{np} with $\overline\varphi$ instead of $\varphi$ in \eqref{rbc"}, respectively. Thanks to the linearity of the equations we obtain, as for \eqref{54"b},
\[
\|\varphi^1-\varphi^2\|_{H^{3/2}(\partial\Omega_T)}
\leq C \|\overline\varphi^1- \overline\varphi^2\|_{H^{3/2}(\partial\Omega_T)}.
\]
Then the map $\overline\varphi\to\varphi$ has a unique fixed point, by the contraction mapping principle. The fixed point $\overline\varphi=\varphi$  provides a unique solution of problem \eqref{np}.
\end{proof}

Lemma \ref{l4} enables us to consider $-b\varphi$ as a given source term $g$ in \eqref{rbc"}:
\[
M
\left(\begin{array}{c}
U\\
V
 \end{array} \right)=g:=-b\,\varphi \qquad \mbox{on}\ \partial\Omega_T.
\]
Then, following the classical argument, we reduce problem \eqref{89"}--\eqref{92"} to one with homogeneous boundary conditions (with $g=0$) by subtracting from
$(U,V)$ a function $(U',V')\in H^2(\Omega_T)\times H^2(\Omega_T)$ such that
\[
M
\left(\begin{array}{c}
U'\\
V'
 \end{array} \right)=g \qquad \mbox{on}\ \partial\Omega_T.
\]
That is, in view of the above lemmata, assuming that problem \eqref{np} with $\varphi =0$ in \eqref{rbc"} has a unique solution $(U,V)\in H^1_*(\Omega_T)\times H^1(\Omega_T)$, with $(q,v_N,H_N,V)|_{x_1=0}\in H^{1/2}(\partial\Omega_T)$, such that
\begin{equation}
[{U}]_{1,*,T}+\|{V}\|_{H^{1}(\Omega_T)}+\|(q,v_N,H_N,V)|_{x_1=0}\|_{H^{1/2}(\partial\Omega_T)}\leq C[F]_{1,*,T},
\label{54"c}
\end{equation}
we get the solution of problem \eqref{89}--\eqref{92} with the regularity prescribed in Theorem \ref{t1}. In other words, it remains to prove the existence of a unique solution $(U,V)$ to the problem
\begin{subequations}
\label{np1}
\begin{align}
& \widehat{A}_0\partial_t{U}+\sum_{j=1}^{3}\widehat{A}_j\partial_j{U}+
\widehat{\mathcal{C}}{U}=F \qquad \mbox{in}\ \Omega_T,  \label{89""}
\\
& \varepsilon\mathcal{B}_0\partial_t{V}+\widehat{\mathcal{B}}_1\partial_1{V}+\mathcal{B}_2\partial_2{V}+
\mathcal{B}_3\partial_3{V}=0 \qquad \mbox{in}\ \Omega_T, \label{90""}
\\
& M
\left(\begin{array}{c}
U\\
V
 \end{array} \right) =0\qquad  \mbox{on}\ \partial\Omega_T, \label{rbc""}
\\
& ({U},{V})=0\qquad \mbox{for}\ t<0.  \label{92""}
\end{align}
\end{subequations}

\begin{lemma}
Let the basic state satisfies the assumptions of Theorem \ref{t1}. Then problem \eqref{np1} has a unique solution $(U,V)\in H^1_*(\Omega_T)\times H^1(\Omega_T)$, with $(q,v_N,H_N,V)|_{x_1=0}\in H^{1/2}(\partial\Omega_T)$, obeying the a priori estimate \eqref{54"c}.
\label{l5}
\end{lemma}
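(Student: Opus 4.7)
The plan is to deduce Lemma \ref{l5} from the a priori estimate of Lemma \ref{l3} by the classical combination of duality with tangential mollification, in the spirit of \cite{ST1,T12}. Because problem (\ref{np1}) is linear with homogeneous boundary conditions and the a priori estimate (\ref{54"c}) is already in hand, the construction of a solution splits from the estimation: one needs only to produce a weak solution and then show it has the asserted regularity.

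First, I would identify the formal adjoint problem. Pairing the left-hand sides of (\ref{89""})--(\ref{90""}) in $L^2(\Omega_T)$ against a test vector $(U^*,V^*)$ and integrating by parts produces boundary contributions $(\widehat{A}_1 U, U^*)|_{x_1=0}$ and $(\widehat{\mathcal{B}}_1 V, V^*)|_{x_1=0}$. Requiring these to vanish under the constraint $M(U,V)^T = 0$ uniquely determines --- thanks to the counting of incoming characteristics performed in Section \ref{sec:2} (two for the vacuum and one for the plasma) --- dual boundary conditions $M^*(U^*,V^*)^T = 0$ for the adjoint. The adjoint system is again symmetric hyperbolic (backward in time) and has the same characteristic/non-characteristic splitting at $x_1 = 0$, so the energy arguments of Section \ref{sec:4} --- including the secondary symmetrization (\ref{105v}), the reduction of the boundary integral to lower-order terms via an analogue of (\ref{111}), and the positive definiteness of $\mathfrak{A}_0 + \widehat{E}_1 \mathrm{Q}_0$ --- transfer with only sign changes and yield an $L^2$-type a priori estimate for $(U^*,V^*)$.

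This dual estimate provides, by Hahn--Banach, a weak $L^2$ solution $(U,V)$ of (\ref{np1}). To promote it to $H^1_*(\Omega_T) \times H^1(\Omega_T)$, I would apply a Friedrichs mollifier $J_\delta$ in the tangential variables $(t,x_2,x_3)$, which commutes with $\partial_t, \partial_2, \partial_3$ and preserves the homogeneous boundary conditions. The mollified solutions are smooth, satisfy analogues of (\ref{89""})--(\ref{rbc""}) with commutator error terms controlled by the standard Friedrichs lemma, and thus satisfy the estimate of Lemma \ref{l3} uniformly in $\delta$. Passing $\delta \to 0$ delivers tangential $H^1_*$-regularity. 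Normal-direction regularity then follows at once: the matrix $\widehat{\mathcal{B}}_1$ is non-singular by (\ref{expan.lin}), so $\partial_1 V$ is expressed algebraically in terms of tangential derivatives exactly as in (\ref{117}); the non-characteristic plasma components $(q, v_N, H_N)$ have their $\partial_1$-derivatives controlled by the variable-coefficients analogues of (\ref{118})--(\ref{119}); and the trace regularity $(q, v_N, H_N, V)|_{x_1=0} \in H^{1/2}(\partial\Omega_T)$ then follows from the standard trace theorem applied to these non-characteristic unknowns.

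The main obstacle I expect is in verifying the dual a priori estimate, specifically checking that the adjoint boundary condition $M^*(U^*,V^*)^T = 0$ --- whose precise form is obtained only algebraically --- still admits the elliptic-interface reduction (\ref{111}) with a non-degenerate determinant under the non-collinearity condition (\ref{52a}). Once it is verified that $\nabla_{t,x}\varphi^*$ can be expressed through the traces of the analogues of the non-characteristic plasma components and the normal magnetic fields for the dual problem, the lower-order boundary contribution in the analogue of (\ref{109}) can be transferred to a volume integral and absorbed via the positive-definiteness condition (\ref{122}); the remaining arguments are routine.
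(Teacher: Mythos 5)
Your overall strategy (duality plus tangential mollification) is genuinely different from the paper's, which instead recasts the problem --- prolonged up to first-order tangential derivatives --- as a symmetric hyperbolic system \eqref{i3} with \emph{literally dissipative} boundary conditions, $((\mathfrak{A}_1+{\rm Q}_1)Z,Z)|_{x_1=0}=0$, and characteristic boundary of constant multiplicity, and then invokes the existence theory of \cite{Rauch,secchi95,secchi96} directly; no adjoint problem is ever formed. The difference is not merely stylistic: there is a gap in your plan at precisely the point you flag as the ``main obstacle,'' and I do not think it can be repaired without essentially rebuilding the paper's construction.

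The gap is your assertion that the adjoint problem admits an ``$L^2$-type a priori estimate'' obtained from the forward energy argument ``with only sign changes.'' The forward problem itself has no closable $L^2$ estimate: the boundary quadratic form $\mathcal{Q}$ restricted to the kernel of $M$ is not sign-definite, and the paper explicitly observes after \eqref{109} that the estimate cannot be closed in $L_2$. Control is recovered only at the level of first-order tangential derivatives, via \eqref{110}, the interface-symbol resolution \eqref{111}, passage to volume integrals by integration by parts, and the positive definiteness of $\mathfrak{A}_0+\widehat{E}_1{\rm Q}_0$ in \eqref{122}; dissipativity is a property of the \emph{prolonged} system, not of \eqref{np1}. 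Consequently the Hahn--Banach step has nothing to pair against: a weak $L^2$ solution for $L^2$ data requires exactly the $L^2$ adjoint estimate that is unavailable, and the boundary conditions are not maximally non-negative, so the adjoint boundary space is not obtained by the standard orthogonality argument for dissipative problems. (A Kreiss-type duality would require a symmetrizer, which the authors note is out of reach here because the boundary is characteristic.) To salvage your route you would have to perform duality for the prolonged system for $Z$ and then descend to $(U,V)$ by integrating the $\partial_t$-block in time --- but at that point you have reproduced the paper's construction and gained nothing over applying \cite{secchi95,secchi96} directly. Two smaller points: your normal-regularity step for $V$ via the invertibility of $\widehat{\mathcal{B}}_1$ under \eqref{expan.lin} introduces $\varepsilon^{-1}$ factors, which is why the paper instead uses the divergence constraints to arrive at \eqref{117}; and the trace statement $(q,v_N,H_N,V)|_{x_1=0}\in H^{1/2}(\partial\Omega_T)$ does not follow from ``the standard trace theorem'' for $H^1_*$ functions but from the extra normal regularity of noncharacteristic components supplied by the characteristic-boundary theory in \cite{secchi95,secchi96,MST}.
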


\begin{proof}
In Section \ref{sec:4} we had constructed the dissipative energy integral which is inequality \eqref{134} and we have the same energy inequality for the case of variable coefficients. However, since integration by parts was used (see Section \ref{sec:4}), we cannot claim that the boundary conditions for the system prolonged up to the first-order tangential derivatives are dissipative in the classical sense. Now we prove that they are indeed dissipative if we drop zero-order terms in $\varphi$.

Setting $\varphi=0$ in \eqref{rbc} means that we drop the lower-order term $\hat{a}_0\varphi$ in \eqref{69v}, i.e., we substitute \eqref{69v} with $\hat{a}_0=0$ into the boundary integrals
\[
2\int_{\mathbb{R}^2}\hat{\mu}\,\partial_{\alpha}\varphi \, \partial_{\alpha}E_1|_{x_1=0}{\rm d}x',
\]
cf. \eqref{110}.\footnote{The same integrals we have for the case of variable coefficients, where $\hat{\mu}=\hat{\mu}(t,x)$ is the function defined through the basic state by formula \eqref{mu}.} For technical simplicity, let us first discuss the case of constant coefficients when, in particular,  $\hat{\mu}={\rm const}$. The substitution of the second line in \eqref{111} into the above boundary integral with $\alpha =2$ leads to the appearance of, for example, the following integral
\begin{equation}
2\int_{\mathbb{R}^2}\hat{\mu}\,a^1_2\,\mathcal{H}_1  \partial_2E_1|_{x_1=0}{\rm d}x'.
\label{i1}
\end{equation}

On the one hand, we can rewrite the boundary integral in \eqref{i1} by passing to the volume integral and integrating by parts:
\[
\int_{\mathbb{R}^2}\mathcal{H}_1  \partial_2E_1|_{x_1=0}{\rm d}x'=\int_{\mathbb{R}^3_+}\left( \partial_2\mathcal{H}_1  \partial_1E_1-\partial_1\mathcal{H}_1  \partial_2E_1\right){\rm d}x,
\]
and, on the other hand, we have
\[
\int_{\mathbb{R}^2}\mathcal{H}_1  \partial_2E_1|_{x_1=0}{\rm d}x'=\int_0^t\int_{\mathbb{R}^2}\left( \partial_{\tau}\mathcal{H}_1  \partial_2E_1-\partial_2\mathcal{H}_1  \partial_{\tau}E_1\right)|_{x_1=0}{\rm d}\tau{\rm d}x'.
\]
Hence, integral \eqref{i1} disappears in \eqref{110} with $\alpha=2$ after the addition to it the identity
\begin{equation}
\int_{\mathbb{R}^3_+}\left( \partial_2\mathcal{H}_1  \partial_1E_1-\partial_1\mathcal{H}_1  \partial_2E_1\right){\rm d}x -\int_0^t\int_{\mathbb{R}^2}\left( \partial_{\tau}\mathcal{H}_1  \partial_2E_1-\partial_2\mathcal{H}_1  \partial_{\tau}E_1\right)|_{x_1=0}{\rm d}\tau{\rm d}x'=0
\label{i2}
\end{equation}
multiplied by $2\hat{\mu} a^1_2$, and the volume integral
\[
2\int_{\mathbb{R}^3_+}\hat{\mu} a^1_2\left( \partial_2\mathcal{H}_1  \partial_1E_1-\partial_1\mathcal{H}_1  \partial_2E_1\right){\rm d}x
\]
makes a corresponding contribution to the integral of the quadratic form with the matrix $\hat{\mu}Q$ in \eqref{121}.

We can write down a symmetric (but not hyperbolic) system for which we have the energy identity \eqref{i2}. Indeed, the system of evident equations
\begin{align*}
 -\partial_t(\partial_2E_1)+\partial_2(\partial_tE_1)=0,\quad
\partial_t(\partial_1E_1)-\partial_1(\partial_tE_1)=0,\quad
\partial_1(\partial_2E_1)-\partial_2(\partial_1E_1)=0,\\
 \partial_t(\partial_2\mathcal{H}_1)-\partial_2(\partial_t\mathcal{H}_1)=0,\quad
-\partial_t(\partial_1\mathcal{H}_1)+\partial_1(\partial_t\mathcal{H}_1)=0,\quad
-\partial_1(\partial_2\mathcal{H}_1)+\partial_2(\partial_1\mathcal{H}_1)=0
\end{align*}
is the symmetric system
\begin{equation}
B_3\partial_tX- B_1\partial_1X +B_2\partial_2X=0 \label{ts}
\end{equation}
for the vector $X=(\partial_1\mathcal{H}_1,\partial_2\mathcal{H}_1,\partial_t\mathcal{H}_1,\partial_1E_1,\partial_2E_1,\partial_tE_1)$ and obeys the energy identity \eqref{i2}, where the symmetric matrices $B_j$ are the same as in the Maxwell equations \eqref{25}. Moreover, using the divergence constraints \eqref{103},
\[
\partial_1\mathcal{H}_1 =\partial_2\mathcal{H}_2+\partial_3\mathcal{H}_3,\quad \partial_1E_1=\partial_2E_2+\partial_3E_3,
\]
 we can pass in \eqref{ts} from the vector $X$ to the vector of only tangential derivatives $Y=(\partial_2\mathcal{H}_2,\partial_3\mathcal{H}_3,\partial_2\mathcal{H}_1,\partial_t\mathcal{H}_1,\partial_2E_2,\partial_3E_3,\partial_2E_1,\partial_tE_1)$ keeping the symmetry property:
\begin{equation}
(\mathcal{T}^TB_3\mathcal{T})\partial_tY- (\mathcal{T}^TB_1\mathcal{T})\partial_1Y +(\mathcal{T}^TB_2\mathcal{T})\partial_2Y=0 ,
\end{equation}
where $X=\mathcal{T}Y$, the rectangular matrix $\mathcal{T}$ can be easily written down, and the matrices $\mathcal{T}^TB_j\mathcal{T}$ are again symmetric.

Using analogous simple arguments, we can understand that any integration by parts in Section \ref{sec:4} can be associated with the addition of the energy identity for some symmetric system for a vector which components are components of the vector $Z$ of tangential derivative of $U$ and $V$. It means that we add to the symmetric hyperbolic system
\begin{equation}
\mathfrak{A}_0\partial_tZ+\sum_{j=1}^3\mathfrak{A}_j\partial_jZ=\mathfrak{F}
\label{bsys}
\end{equation}
constructed from system \eqref{89""}, \eqref{90""} (here in the case of constant coefficients) a symmetric system
\[
\hat{\mu}{\rm Q}_0\partial_tZ+\sum_{j=1}^3{\rm Q}_j\partial_jZ={\rm Q}_4\mathfrak{F}
\]
and then consider the energy identity for the resulting system
\begin{equation}
(\mathfrak{A}_0+\hat{\mu}{\rm Q}_0)\partial_tZ+\sum_{j=1}^3(\mathfrak{A}_j+{\rm Q}_j)\partial_jZ=(I+{\rm Q}_4)\mathfrak{F}
\label{i3}
\end{equation}
which is hyperbolic under condition \eqref{122}, where the matrices $\mathfrak{A}_0$ and ${\rm Q}_0$ were defined in Section \ref{sec:4},
\[
\mathfrak{A}_1={\rm diag}(\widehat{A}_1,\varepsilon^{-1}\widehat{\mathcal{B}}_1,\ldots ,\widehat{A}_1,\varepsilon^{-1}\widehat{\mathcal{B}}_1),\quad
\mathfrak{A}_k={\rm diag}(\widehat{A}_k,\varepsilon^{-1}\mathcal{B}_k,\ldots ,\widehat{A}_k,\varepsilon^{-1}\mathcal{B}_k),\quad k=2,3,
\]
\[
\mathfrak{F}=(\partial_tF,\overline{0},\partial_2F,\overline{0},\partial_3F,\overline{0}),\quad \overline{0}=(0,0,0,0,0,0),
\]
and the matrices ${\rm Q}_i$ ($i=\overline{1,4}$) can be explicitly written down if necessary.

If we do not apply the Young inequality towards the derivation of inequality \eqref{121}, we obtain the corresponding identity
\begin{equation}
\int_{\mathbb{R}^3_+}\left( (\mathfrak{A}_0+ \hat{\mu}{\rm Q}_0)Z,Z\right){\rm d}x
-2\int_{\mathbb{R}^3_+}\hat{\mu}F_1\partial_tE_1{\rm d}x
=2\int_0^t\int_{\mathbb{R}^3_+}(\mathfrak{F},Z){\rm d}x{\rm d}\tau .
\label{121i}
\end{equation}
Instead of the usage of inequality \eqref{124} we can first express $\partial_tE_1$ from the fourth equation in \eqref{100} by taking into account the third constraint in \eqref{103},
\[
\partial_tE_1 = -\varkappa (\partial_2E_2+\partial_3E_3) +\varepsilon^{-1}(\partial_3\mathcal{H}_2-\partial_2\mathcal{H}_3),
\]
and then we have
\begin{multline*}
\int_{\mathbb{R}^3_+}\hat{\mu}F_1\partial_tE_1{\rm d}x=\int_0^t\int_{\mathbb{R}^3_+}\hat{\mu}\bigl\{\varkappa (\partial_2F_1\partial_{\tau}E_2 +\partial_3F_1\partial_{\tau}E_3)\\
-\varepsilon^{-1}( \partial_3F_1\partial_{\tau}\mathcal{H}_2- \partial_2F_1\partial_{\tau}\mathcal{H}_3) \bigr\} {\rm d}x{\rm d}\tau
:=\int_0^t\int_{\mathbb{R}^3_+}({\rm Q}_4\mathfrak{F},Z){\rm d}x{\rm d}{\tau},
\end{multline*}
and \eqref{121i} implies the energy identity
\begin{equation}
\int_{\mathbb{R}^3_+}\left( (\mathfrak{A}_0+ \hat{\mu}{\rm Q}_0)Z,Z\right){\rm d}x
=2\int_0^t\int_{\mathbb{R}^3_+}((I+{\rm Q}_4)\mathfrak{F},Z){\rm d}x{\rm d}\tau .
\label{121ii}
\end{equation}
Clearly, here the matrix ${\rm Q}_4$ is the same as in system \eqref{i3} for which the energy identity reads
\begin{multline}
\int_{\mathbb{R}^3_+}\left( (\mathfrak{A}_0+ \hat{\mu}{\rm Q}_0)Z,Z\right){\rm d}x-
\int_0^t\int_{\mathbb{R}^3_+}((\mathfrak{A}_1+{\rm Q}_1)Z,Z)|_{x_1=0}{\rm d}x'{\rm d}\tau \\
=2\int_0^t\int_{\mathbb{R}^3_+}((I+{\rm Q}_4)\mathfrak{F},Z){\rm d}x{\rm d}\tau .\qquad
\label{121iii}
\end{multline}
Comparing \eqref{121ii} and \eqref{121iii} we conclude that the boundary conditions for system \eqref{i3} are {\it dissipative}, to be exact,
\[
((\mathfrak{A}_1+{\rm Q}_1)Z,Z)|_{x_1=0} =0.
\]

For the case of variable coefficients some ``lower-order'' terms appear in energy integrals, but again any integration by parts towards the proof of the energy inequality \eqref{134} can be associated with the addition of some symmetric system for the vector $(W,Z)$, with $W=(U,V)$ (unlike the case of constant coefficients it will contain also equations for the original unknown $W$). The boundary integral will again disappear in the final energy identity, i.e., the boundary conditions for the prolonged system are dissipative. Moreover, for guaranteeing the hyperbolicity of the prolonged system for $(W,Z)$, together with the variable coefficients counterpart of \eqref{bsys} we consider the ``trivial'' system
\begin{equation}
\beta\partial_tW -\beta W_0=0
\label{i4}
\end{equation}
with the constant $\beta$ large enough and the vector $W_0$ containing in $Z=(W_0,W_2,W_3)$.

The vector  $Z$ satisfies the variable coefficients counterpart of \eqref{bsys} obtained by the formal differentiation of system \eqref{89""}, \eqref{90""} with respect to $t$, $x_2$ and $x_3$, i.e., $W_2$ and $W_3$ are associated with $\partial_2W$ and $\partial_3W$ respectively. Since zero-order terms in $\varphi$ play no role in the derivation of the energy estimate for the variable coefficients problem (see \cite{T12}), problem \eqref{np1} obeys the a priori estimate \eqref{54"c}. Moreover, the same estimate takes place for $\partial_2W$ and $\partial_3W$. But, since $W_2$ and $W_3$ satisfies the same problem as $\partial_2W$ and $\partial_3W$, estimate \eqref{54"c} implies uniqueness, i.e., $W_k=\partial_kW$, $k=2,3$.

Thus, we obtain a symmetric (prolonged) system by adding to \eqref{i4} and the variable coefficients counterpart of \eqref{bsys} the symmetric system for $(W,Z)$ associated with integrations by parts. This system is hyperbolic under condition \eqref{122} and for the constant $\beta$ large enough. Moreover, the boundary conditions for this system are dissipative. Finally, as the boundary is characteristic of constant multiplicity \cite{Rauch}, we may apply the result of \cite{secchi95,secchi96} and we get the solution from  $H^m_*(\Omega_T)$, with $m\geq 1$ and the noncharacteristic unknowns having a greater degree of regularity in the normal direction. In view of \eqref{i4}, integrating the interior equations and the boundary conditions for $W_0=\partial_tW$ (containing lower-order terms with $W$) over the time interval $[0,t]$ and taking into account the zero initial data, we get the solution of \eqref{np1} with the prescribed regularity.
\end{proof}

Taking into account the arguments before Lemma \ref{l5}, the proof of this lemma completes the proof of the existence of the solution of problem \eqref{89}--\eqref{92}.

\section*{Achnowledgments}

This work was supported by RFBR (Russian Foundation for Basic Research) grant No. 10-01-00320-a.

\end{document}